\documentclass[preprint,12pt]{elsarticle}

\usepackage{lineno,hyperref}
\modulolinenumbers[5]

%%%%%%%%%%%%%%%%%%%%%%%
%% Elsevier bibliography styles
%%%%%%%%%%%%%%%%%%%%%%%
%% To change the style, put a % in front of the second line of the current style and
%% remove the % from the second line of the style you would like to use.
%%%%%%%%%%%%%%%%%%%%%%%

\usepackage{amssymb}
%% The amsthm package provides extended theorem environments
%% \usepackage{amsthm}
\usepackage{algorithmic}
\usepackage{algorithm}
\usepackage{amssymb}
\usepackage{graphicx}
\usepackage{amsmath,amsfonts,amssymb}
\usepackage{epsfig,makecell,float}
\usepackage{setspace,mathrsfs}
\usepackage{tocloft}
\usepackage{textcomp}
\usepackage{multirow,indentfirst,times,color}
\usepackage[caption=false,font=footnotesize]{subfig}
\usepackage{booktabs}
\usepackage{threeparttable}
\usepackage{amsthm}
\usepackage{extarrows}

\usepackage[titletoc]{appendix}
\usepackage{epstopdf}
\usepackage{natbib}
\biboptions{numbers,sort&compress}
\newtheorem{thm}{Theorem}
\newtheorem{defn}{Definition}
\newtheorem{remark}{Remark}
\newtheorem{lem}{Lemma}

%\journal{Digital Signal Processing}

%%%%%%%%%%%%%%%%%%%%%%%
%% Elsevier bibliography styles
%%%%%%%%%%%%%%%%%%%%%%%
%% To change the style, put a % in front of the second line of the current style and
%% remove the % from the second line of the style you would like to use.
%%%%%%%%%%%%%%%%%%%%%%%

%% Numbered
%\bibliographystyle{model1-num-names}

%% Numbered without titles
%\bibliographystyle{model1a-num-names}

%% Harvard
%\bibliographystyle{model2-names.bst}\biboptions{authoryear}

%% Vancouver numbered
%\usepackage{numcompress}\bibliographystyle{model3-num-names}

%% Vancouver name/year
%\usepackage{numcompress}\bibliographystyle{model4-names}\biboptions{authoryear}

%% APA style
%\bibliographystyle{model5-names}\biboptions{authoryear}

%% AMA style
%\usepackage{numcompress}\bibliographystyle{model6-num-names}

%% `Elsevier LaTeX' style
\bibliographystyle{elsarticle-num}
%%%%%%%%%%%%%%%%%%%%%%%

\begin{document}

\begin{frontmatter}

\title{Discrete Linear Canonical Transform on Graphs: Uncertainty Principle and Sampling}
%\tnotetext[mytitlenote]{Fully documented templates are available in the elsarticle package on \href{http://www.ctan.org/tex-archive/macros/latex/contrib/elsarticle}{CTAN}.}

\author{Yu Zhang$^{a,b,c}$}
\author{Bing-Zhao Li$^{a,b}$\corref{mycorrespondingauthor}}
\cortext[mycorrespondingauthor]{Corresponding author}\ead{li\_bingzhao@bit.edu.cn}

\address{$^{a}$School of Mathematics and Statistics, Beijing Institute of Technology, Beijing 100081, China}
\address{$^{b}$Beijing Key Laboratory on MCAACI, Beijing Institute of Technology, Beijing 100081, China}
\address{$^{c}$Department of Mechanical Engineering, Keio University, Yokohama 223-8522, Japan}

%\linenumbers

\begin{abstract}
	%% Text of abstract
	With an increasing influx of classical signal processing methodologies into the field of graph signal processing, approaches grounded in discrete linear canonical transform have found application in graph signals. In this paper, we initially propose the uncertainty principle of the graph linear canonical transform (GLCT), which is based on a class of graph signals maximally concentrated in both vertex and graph spectral domains. Subsequently, leveraging the uncertainty principle, we establish conditions for recovering bandlimited signals of the GLCT from a subset of samples, thereby formulating the sampling theory for the GLCT. We elucidate interesting connections between the uncertainty principle and sampling. Further, by employing sampling set selection and experimental design sampling strategies, we introduce optimal sampling operators in the GLCT domain. Finally, we evaluate the performance of our methods through simulations and numerical experiments across applications.
\end{abstract}

\begin{keyword} 
	%% keywords here, in the form: keyword \sep keyword
	Graph signal processing\sep graph Fourier transform\sep graph linear canonical transform\sep uncertainty principle\sep sampling theory.
\end{keyword}

\end{frontmatter}

\section{Introduction}
Advancements in information and communication technologies have led to the integration of various types of data in fields such as social networks, recommendation systems, medical image processing, and bioinformatics. Unlike traditional time series or digital images, these datasets often exhibit complex and irregular structures, posing challenges for conventional signal processing tools \cite{GFTlaplace, GFTadjacency1}. Over the past decade, significant progress has been made in the development of tools for analyzing signals defined on graphs, giving rise to the field of graph signal processing (GSP) \cite{GFTlaplace, GFTadjacency1,GFTadjacency2,Goverview,Ghistory}.

GSP relies on graph topology, extending traditional discrete signal processing to signals characterized by the complex and irregular underlying structures represented by corresponding graphs. Emphasizing the relationship, interaction, and effects between signals and their graph structures in various real-world scenarios, GSP aims to build a theoretical framework for high-dimensional and large-scale data analysis tasks from a signal processing perspective. This includes graph transforms \cite{GFTlaplace,GFTadjacency1,GFTadjacency2}, frequency analysis \cite{Gfrequency,Gvertex}, filtering \cite{Graphonfilter,Gfilter}, uncertainty principles on graphs \cite{GUncertainty,GspectralUC,GshapesUC}, sampling and interpolation \cite{GUncertainty,GFTsamp,GFTEfficient,GFTSSS,GPractical,Gdualizing,GFRFTsamp,Ggeneralizedsamp,Ginterpolation}, reconstruction and recovery \cite{GParallel,GNon-Bayesian,GBayes,Jdirectedsampling,Gnoisesampling,Grecovery}, fast computation algorithms \cite{GFTSSS,FGFT,GLCTfast}, and more. In GSP, spectral analysis plays a crucial role, inspired by harmonic analysis, using the graph Laplacian operator as its core theoretical foundation, naturally extending concepts such as frequency and filter banks to the graph domain \cite{GFTlaplace}. On the other hand, inspired by algebraic methods, the multiplication of the graph signal with the adjacency matrix of the underlying graph yields the fundamental shift operation for graph signals \cite{GFTadjacency1,GFTadjacency2}. These two approaches bring complementary perspectives and tools, collectively fostering attention and development in the GSP field. Although the adjacency matrix does not directly represent signal variations as the Laplacian operator does, its eigenvectors still reveal the smoothness and variation characteristics of signals on the graph \cite{Gfrequency}. In this paper, we adopt an algebraic method framework to broaden its scope.

A crucial tool in this framework is the graph Fourier transform (GFT), which expands signals into eigenvectors of the adjacency matrix, defining the spectrum through corresponding eigenvalues. It is a direct generalization of the discrete Fourier transform (DFT) to graphs \cite{GFTadjacency2}. Although the GFT can perform various operations on real-world graph signals, it cannot capture the transformation process from the vertex domain to the spectral domain and cannot handle graph signals with chirp-like characteristics \cite{GFRFT,GFRFTconvol}. In recent years, approaches like windowed graph Fourier transform \cite{Gvertex}, graph fractional Fourier transform (GFRFT) \cite{GFRFT}, windowed graph fractional Fourier transform \cite{WGFRFT}, and the directed graph fractional Fourier transform \cite{GFRFTlaplace} have been proposed to address issues with these methods. However, these approaches still suffer from insufficient degrees of freedom, lack of flexibility, underutilization of parameters, and inability to effectively handle non-stationary signals and non-integer-order situations.

Inspired by the handling of nonlinear and non-stationary signals in discrete signal processing (DSP), we also introduce the discrete linear canonical transform (DLCT) \cite{DLCT} that can solve the limitations of the DFT \cite{DFT} and discrete fractional Fourier transform (DFRFT) \cite{DFRFT} into the GSP. Generalizing the DFT to the DLCT is not a straightforward process; it presents significant challenges. This generalization encompasses a range of transformations including the DFT, DFRFT, Scaling, Fresnel Transform, and chirp multiplication (CM). Consider a chirp signal characterized by a frequency variation over time, as Fig. \ref{fig1}, which is prevalent in various engineering and scientific domains, including radar, communications, and medical imaging applications \cite{DLCT,DFT,DFRFT,LCTapplications}. Since DFT treats the entire signal as a stationary signal and cannot capture temporal dynamics, its magnitude spectrum lacks detailed information about how the frequency changes over time. In contrast, DLCT shows excellent adaptability to such nonlinear and non-stationary signals, and can more clearly distinguish the frequency changes of the signal while having higher frequency resolution.
\begin{figure}[b]
	\begin{center}
		\begin{minipage}[t]{0.9\linewidth}
			\centering
			\includegraphics[width=\linewidth]{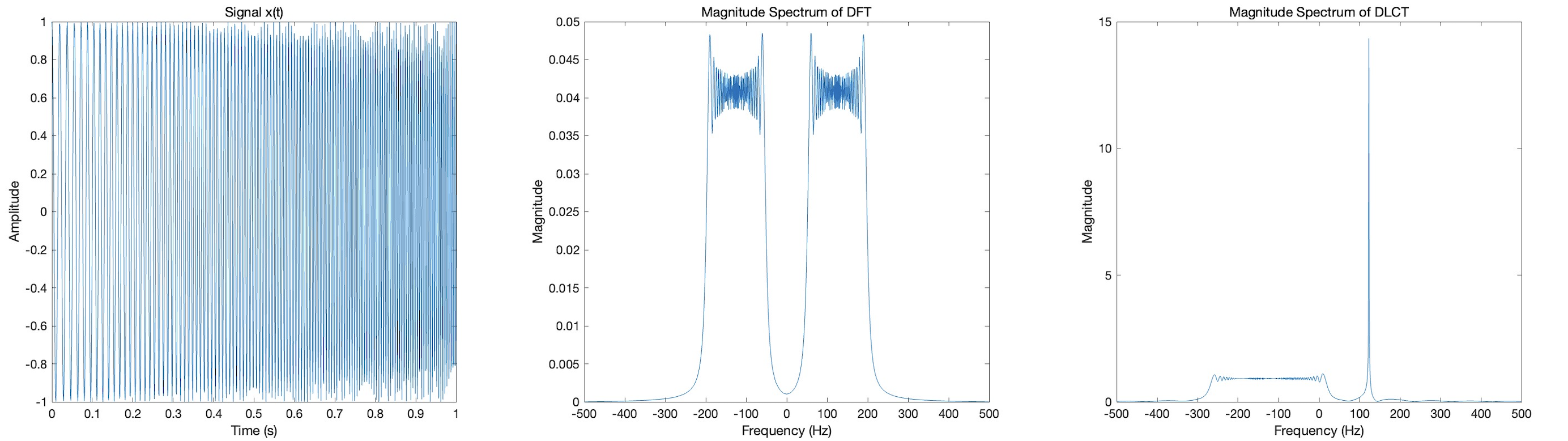}
		\end{minipage}
	\end{center}
	\vspace*{-20pt}
	\caption{The amplitude of signal $x(t)=\exp(2j\pi (f_0 t + 0.5 \mu t^2 ))$ and its spectrum after DFT and DLCT of $[0.1483,-0.9889;0.9889,0.1483]$, with sampling frequency $f_s = 1000$ Hz, time range $t = [0,1]$, initial frequency $f_0 = 50$ Hz, and frequency change rate $\mu = 150$.}
	\vspace*{-3pt}
	\label{fig1}
\end{figure}
In our previous research \cite{GLCT}, we introduced the graph linear canonical transform (GLCT) as a generalization of the DLCT in GSP. Serving as an extension of the GFT and the GFRFT, the GLCT, characterized by its three free parameters, exhibits strong adaptability and flexibility in signal processing. It demonstrates clear advantages in handling non-stationary signals and non-integer-order cases. We prove that it satisfies all expected properties, unifying GFT, GFRFT, graph chirp transform, and graph scale transform, and provide two examples of GLCT.

In this paper, we further explore the uncertainty principle in the GLCT domain. The Heisenberg uncertainty principle in continuous-time signals \cite{UC}, highlights the fundamental trade-off between the signal's spread in time and its spread in frequency. In \cite{GspectralUC}, the uncertainty principle for graph signals was first introduced using geodesic distance, aiming to establish a connection between the signal spread at the vertices of the graph and its spectrum spread defined by GFT in the graph spectral domain. However, a key difference between graph signals and time signals is that, while time or frequency has a well-defined distance concept, graphs are not metric spaces. Additionally, the vertices of a graph can represent more complex signals, such as those in Hilbert spaces \cite{GHilbert}. To overcome the complexity of defining distances, M. Tsitsvero et al. \cite{GUncertainty} creatively introduced prolate spheroidal wave functions \cite{PSWfunctions1,PSWfunctions2} into graph signals, defining a new graph uncertainty principle using the energy percentage. Leveraging this approach, we delineate the uncertainty principle in the GLCT domain, which proves to be broader in scope compared to those in the GFT domain. As the parameters of the GLCT undergo modifications, the scope of the uncertainty principle correspondingly shifts. This dynamic interplay furnishes pivotal insights for the formulation of the sampling theorem in the GLCT domain.

Furthermore, based on the proposed uncertainty principles of GLCT and its localization properties, we consider sampling and reconstruction in the GLCT domain. Most existing methods focus on smooth or bandlimited signals, where the signal's energy tends to be concentrated in a subset of the eigenvectors of the Laplacian or adjacency matrix \cite{GUncertainty,GFTsamp,GFTEfficient,GFTSSS,GPractical,Gdualizing,GFRFTsamp,Ggeneralizedsamp}. Therefore, sampling becomes a problem of selecting the optimal rows of the eigenvector matrix corresponding to the GFT \cite{Gfrequency,GUncertainty,GFTsamp,GFTEfficient,GFTSSS,GPractical,Gdualizing}. Investigations into the relationship between signal spread on graphs and its spectral spread \cite{GUncertainty} have been proposed, along with optimal sampling techniques for signals with known frequency support \cite{GFTsamp}. Methods for noise-robust recovery and blue noise sampling have been developed \cite{Gnoisesampling}. In addition, a novel scalable sampling reconstruction method with parallelization has been proposed \cite{GParallel}, along with non-Bayesian or variational Bayesian estimators based on the Cramer-Rao bound for signal recovery \cite{GNon-Bayesian,GBayes}. Joint sampling and reconstruction of time-varying signals on directed graphs and the graph signal generalized sampling with prior information based on the GFRFT are proposed in \cite{Jdirectedsampling} and \cite{GFRFTsamp,Ggeneralizedsamp} respectively, with most of these bandlimited signal sampling methods employing greedy algorithms to select the optimal sampling sets. There is an intriguing connection between sampling theory and uncertainty principle in the GLCT domain, leading to conditions for recovering bandlimited signals from subsets of their values. We present several signal recovery algorithms and sampling strategies aimed at finding the optimal sampling set. Building on the relationship between the GFT, the GFRFT and, the GLCT, this paper establishes a framework for graph signal sampling based on prior information in the GLCT domain.

Our contributions are summarized as follows:
\begin{itemize}%\textbf{Contributions }
	\item{Firstly, based on conditions for perfect localization in the vertex and graph spectral domains, we propose an uncertainty principle in the GLCT domain and discuss its intriguing connection with sampling.}
	\item{Secondly, by defining bandlimited signals in the GLCT domain, we propose conditions for sampling and perfect recovery in the GLCT domain. We present a method for designing and selecting optimal sampling operators from qualified sampling operators, comparing various sampling strategies.}
	\item{Finally, through simulation experiments, we compare our proposed sampling framework with GFT and GFRFT approaches. The results demonstrate that our sampling framework produces minimal errors and showcases its competitive performance in the application of semi-supervised classification for online blogs and clustering of IEEE 118 bus test cases.}
\end{itemize}

%\textbf{Outline of the paper.}
This paper is structured as follows. Section \ref{2} provides a brief overview of our prior work in \cite{GLCT}, the graph uncertainty principle, and the graph sampling framework. In Section \ref{3}, based on conditions for perfect localization, we propose the uncertainty principle in the GLCT domain and discuss their relationship with sampling. In Section \ref{4}, we begin by defining $\mathbf{M}$-bandlimited signals, introduce conditions for sampling and perfect recovery in the GLCT domain, and select the optimal sampling operator from qualified sampling operators. We present various sampling strategies and compare them. Section \ref{5} evaluates the classification performance of GLCT sampling on online blogs and clustering of IEEE 118 bus test cases, comparing it with GFT and GFRFT sampling. Finally, Section \ref{6} concludes this paper.\footnote{Code Available in: \url{https://github.com/Zhangyubit/GLCTsampling}}

% %Table 1
%\begin{table}[h]
%	\begin{center}
%		\begin{threeparttable}
%			\caption{Key notations in the paper}
%			\begin{tabular}{lll}
%				%				\hlinew{1.2pt}
%				Symbol & Description & Dimension\\
%				\hline
%				$\mathbf{A}$ & adjacency matrix & $N\times N$ \\
%				$x$ & graph signal & $N$ \\
%				$\mathbf{V}^{-1}$ & graph Fourier transform matrix & $N\times N$ \\
%				$\mathbf{O}^{\mathbf{W}}$ & graph linear canonical transform matrix & $N\times N$ \\
%				$\hat{x}$ & transformed graph signal & $N$ \\
%				$\mathrm{\Psi}$ & sampling operator & $M\times N$ \\
%				$\mathrm{\Phi}$ & interpolation operator & $N\times M$ \\
%				$\mathcal{M}$ & sampled indices &  \\
%				$x_\mathcal{M}$ & sampled signal & $M$ \\
%				$\hat{x}_{(K)}$ & fist $K$ coefficients of $\hat{x}$ & $K$ \\
%				$\mathbf{O}^{\mathbf{W}^{-1}}_{(K)}$ & fist $K$ columns of $\mathbf{O}^{\mathbf{W}^{-1}}$ & $N\times K$ \\
%				\hline
%			\end{tabular}
%		\end{threeparttable}
%	\end{center}
%\end{table}

\section{Preliminaries}
\label{2}
In this section, we briefly review concepts in DSP on the GLCT \cite{GLCT} and introduce the uncertainty principle of graph signals \cite{GspectralUC} and the basic theory of sampling \cite{GUncertainty,GFTsamp,GFTEfficient,GFTSSS,GPractical}.

\subsection{Discrete Linear Canonical Transform on Graphs}
In DSP on graphs, the high-dimensional structure of a signal is represented by a graph $\mathcal{G}=(\mathcal{V}, \mathbf{A}),$ where $\mathcal{V}=\{ v_0, \dots, v_{N-1} \}$ denotes the set of vertices and $\mathbf{A}\in \mathbb{C}^{N\times N}$ denotes the weighted adjacency matrix. The graph signal is defined as a mapping that maps the vertex $v_n$ to a signal coefficient $x_n\in\mathbb{C}$, which can also be written as a complex vector $
\mathbf{x} =\left[ x_{0}\  x_{1}\  \ldots \  x_{N-1}\right]^{\top } \in \mathbb{C}^{N}$.

\textit{1) Graph Fourier Transform:} It can be defined through the eigendecomposition of either the adjacency matrix or the Laplacian matrix \cite{GFTlaplace,GFTadjacency1,GFTadjacency2}. Given that the DFT is equivalent to the GFT in the context of cyclic graphs (directed graphs) \cite{GFTadjacency1,GFTadjacency2}, we opt for the adjacency matrix to broaden the scope of application and derive the equivalent transformation of the DLCT \cite{DLCT} on graphs. Consequently, the GFT is defined as $\mathbf{\hat{x}} = \mathbf{V}^{-1} \mathbf{x} $, where $ \mathbf{V}^{-1} $ is the inverse of the matrix composed of the eigenvectors obtained from the eigendecomposition of the adjacency matrix $\mathbf{A}$. Analogous to how the GFT extends the DFT, we propose the extension of the DLCT to GSP, introducing the GLCT. Utilizing the center discrete dilated Hermite functions eigendecomposition \cite{DLCT}, the DLCT can be decomposed into three stages: the DFRFT, scaling, and CM. Therefore, the definition of the GLCT also encompasses three stages: the GFRFT, graph scaling, and graph CM \cite{GLCT}.

\textit{2) Graph Fractional Fourier Transform:} By performing eigendecomposition on the orthogonal and diagonalized GFT matrix $\mathbf{V}^{-1}$, the GFRFT operator can be obtained as \cite{GFRFT, GFRFTconvol}
\begin{equation}
	\mathbf{F^{\alpha }}=(\mathbf{V}^{-1})^{\alpha}=\mathbf{Q}\mathbf{\Lambda}^{\alpha} \mathbf{Q}^{\top},\label{GFRFT}
\end{equation}
where the columns $q_{0},\dots, q_{N-1} $ of $\mathbf{Q}$ are the eigenvectors of $\mathbf{V}^{-1}$, and the eigenvalues in the corresponding diagonal matrix $\mathbf{\Lambda}$ are $\lambda_{0}, \dots, \lambda_{N-1}$. And $\mathbf{\Lambda}^{\alpha}=\mathrm{diag}(\lambda^{\alpha}_{i}),i=0,\dots, N-1$.

\textit{3) Graph Scaling:} 
Inspired by the concept of ``graph time'' obtained through the discrete ordered diffusion of the graph shift operator, we consider the scaled adjacency matrix $\mathbf{S} = \frac{1}{\beta} \mathbf{A}$ as the graph scaling operator \cite{Graphonfilter}. Similar to the GFT, by performing eigendecomposition on $\mathbf{S}$, we obtain the scaled GFT matrix $\mathbf{V}^{-1}_{\beta}$, from which further eigendecomposition yields
\begin{equation}
	\mathbf{V}^{-1}_{\beta}=\mathbf{Q_{\beta }\Lambda_{S} Q^{\top}_{\beta }}, \label{Scale}
\end{equation}
where $\beta$ is the scaling parameter, and $\mathbf{Q}_{\beta }$ and $\mathbf{\Lambda_{S}}$ denote the eigenvectors of $\mathbf{V}^{-1}_{\beta}$ and its corresponding eigenvalues, respectively.

\textit{4) Graph Chirp Multiplication:} Similar to the definition of the discrete chirp-Fourier transform \cite{DCFT}, the graph CM is performed on the GFT matrix $\mathbf{V}^{-1}$, as
\begin{equation}
	\mathbf{F^{\xi}}=\mathbf{Q}\mathbf{\Lambda}^{\xi} \mathbf{Q}^{\top},  \label{CM}
\end{equation}
where $\xi=lk+f$ is the CM parameter which is a linear function of $k$, with $l$ and $f$ being constants, and the matrix $\mathbf{\Lambda}^{\xi}$ is $\mathrm{diag}(\lambda^{\xi}_{k}) = \mathrm{diag}(\exp \left( j\frac{\pi}{2} k \xi \right))$. Therefore, extending the GFT to the GLCT poses certain challenges.

\begin{remark}
Theoretically, in traditional DSP, the DLCT typically comprises at least three components: the DFRFT, scaling transform, and CM. Consequently, in GSP, the GLCT cannot be as straightforwardly extended from the GFT as the GFRFT. Instead, it necessitates the proposal and identification of new graph-based counterparts for scaling transform and CM.
\end{remark}

\textit{5) Graph Linear Canonical Transform:} Based on the aforementioned four transforms and Eqs. \eqref{GFRFT}-\eqref{CM}, we summarize their interrelationships in Fig. \ref{fig2} and define the GLCT as follows.
\begin{figure}[h]
	\begin{center}
		\begin{minipage}[t]{0.9\linewidth}
			\centering
			\includegraphics[width=\linewidth]{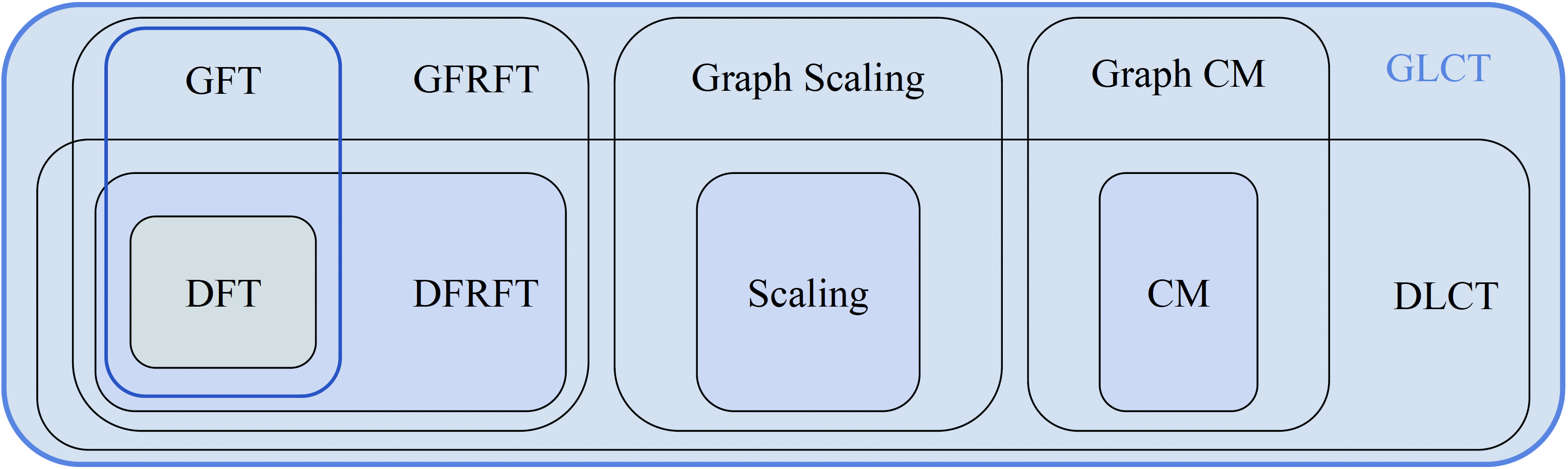}
		\end{minipage}
	\end{center}
	\vspace*{-20pt}
	\caption{Relationship between GLCT and other related transforms.}
	\vspace*{-3pt}
	\label{fig2}
\end{figure}

\begin{defn}
	Parallel to the GFT, the GLCT of $\mathbf{x}\in\mathbb{C}^{N}$ can be defined as \cite{GLCT}
	\begin{equation}
		\mathbf{\hat{x}} =\mathbf{O}^{\mathbf{M}} \mathbf{x}= \mathbf{\Lambda }^{\xi } \mathbf{Q}_{\beta } \mathbf{\Lambda }^{\alpha } \mathbf{Q}^{\top }  \mathbf{x} ,\label{OM}
	\end{equation}
		where the operator $\mathbf{O}^{\mathbf{M}}$ is a operator of the GLCT, and the matrix $\mathbf{M}$ entries are $[a,b;c,d]$, which is called an GLCT parameter matrix, and $ad-bc=1, \ a,b,c,d\in \mathbb{R}$. The parameter relations between $\mathbf{M}$ and $(\xi,\beta,\alpha)$ are
	\begin{equation}
		\xi =\frac{ac+bd}{a^{2}+b^{2}} ,\ \  \beta =\sqrt{a^{2}+b^{2}}, \ \  \alpha =\frac{2}{\pi} \cos^{-1} \left( \frac{a}{\beta } \right)  =\frac{2}{\pi} \sin^{-1} \left( \frac{b}{\beta } \right)  .
	\end{equation}
\end{defn}
Furthermore, its inverse GLCT (IGLCT) is
	\begin{equation}
		\mathbf{x} =\mathbf{O}^{-\mathbf{M}} \mathbf{\hat{x}}  = \mathbf{Q} \mathbf{\Lambda }^{-\alpha } \mathbf{Q}^{\top }_{\beta } \mathbf{\Lambda }^{-\xi }  \mathbf{\hat{x}}  .
	\end{equation}
An example is given using the Minnesota road graph \cite{GLCT}, as shown in Fig. \ref{fig3}. Panel (a) depicts the original signal $\mathbf{x}=\mathbf{O^{M}}\cdot[1,1,...,-1,-1]^{\top}$ is a sequence of length $N$, panel (b) and (c) show the spectrum after the GFT and the GLCT, respectively. By adjusting the parameters of the GLCT, different perspectives of the graph spectrum can be obtained, similar to the DLCT, which can make the spectrum more concentrated.
	\begin{figure}[h]
		\begin{center}
			\begin{minipage}[t]{0.3\linewidth}
				\centering
				\includegraphics[width=\linewidth]{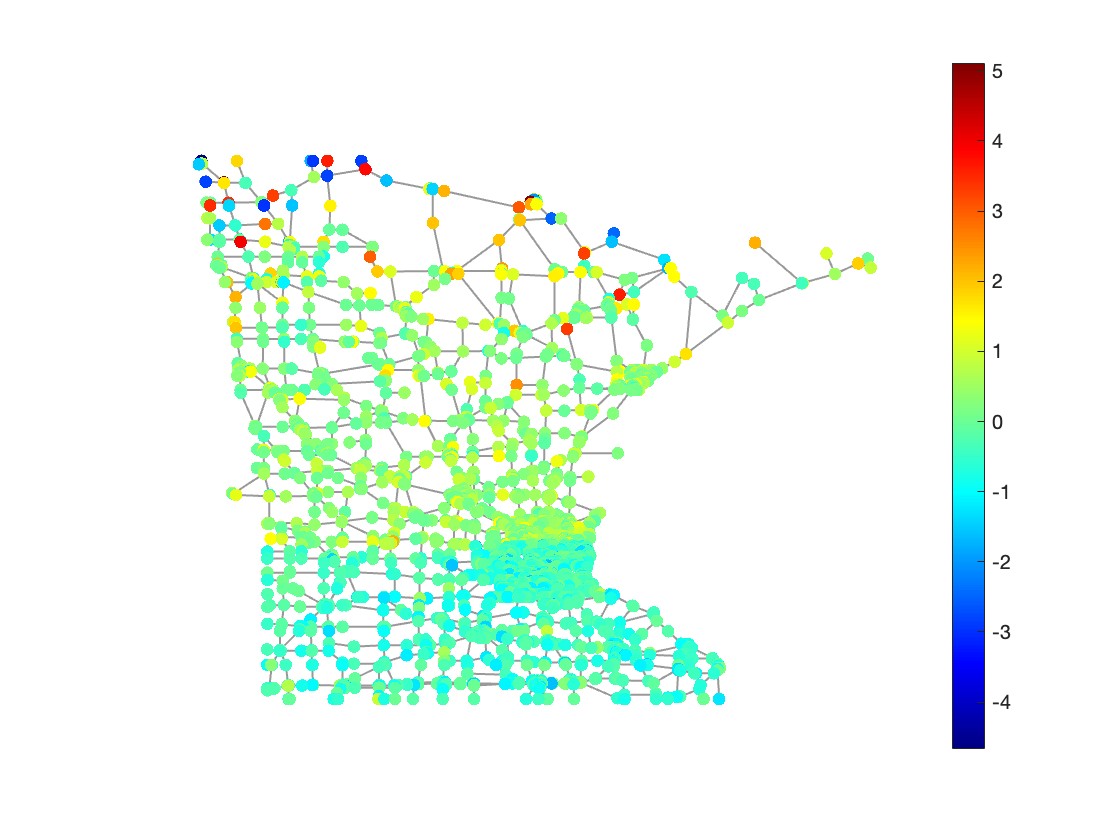}
				\parbox{2.5cm}{\tiny(a) Original graph signal.}
			\end{minipage}
			\begin{minipage}[t]{0.3\linewidth}
				\centering
				\includegraphics[width=\linewidth]{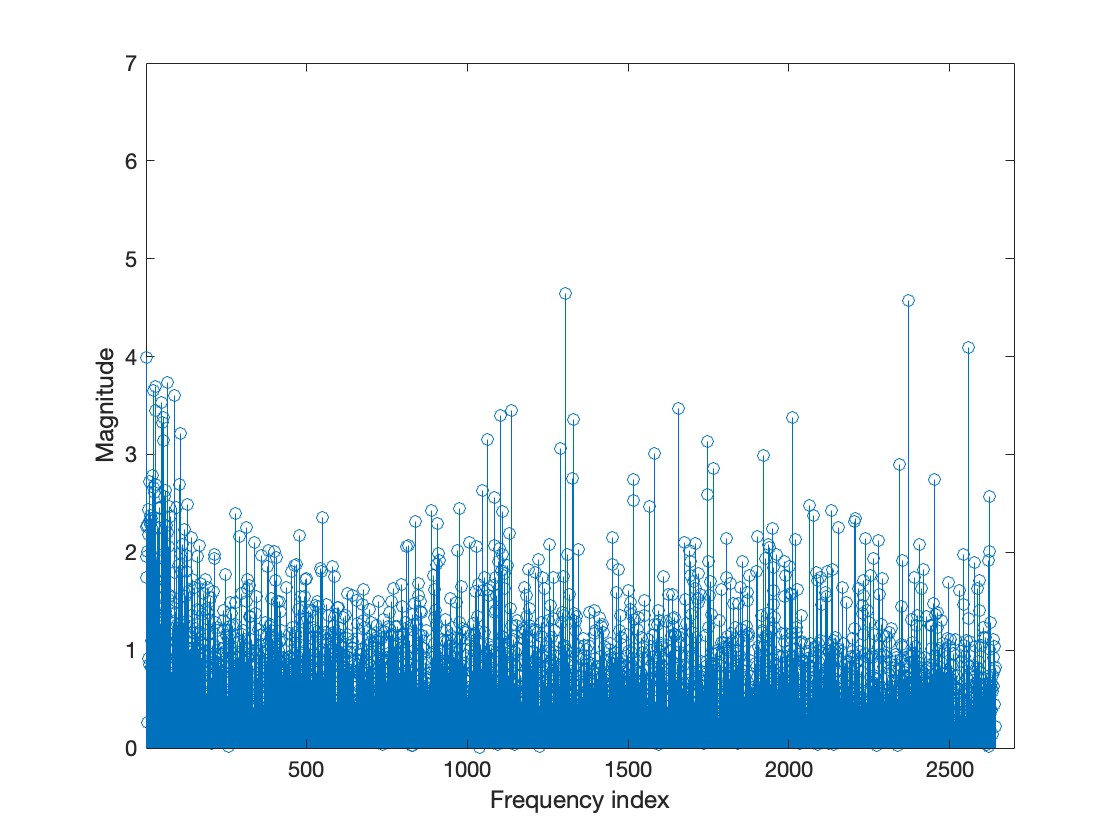}
				\parbox{2.5cm}{\tiny (b) Spectrum after the GFT.}
			\end{minipage}
			\begin{minipage}[t]{0.3\linewidth}
				\centering
				\includegraphics[width=\linewidth]{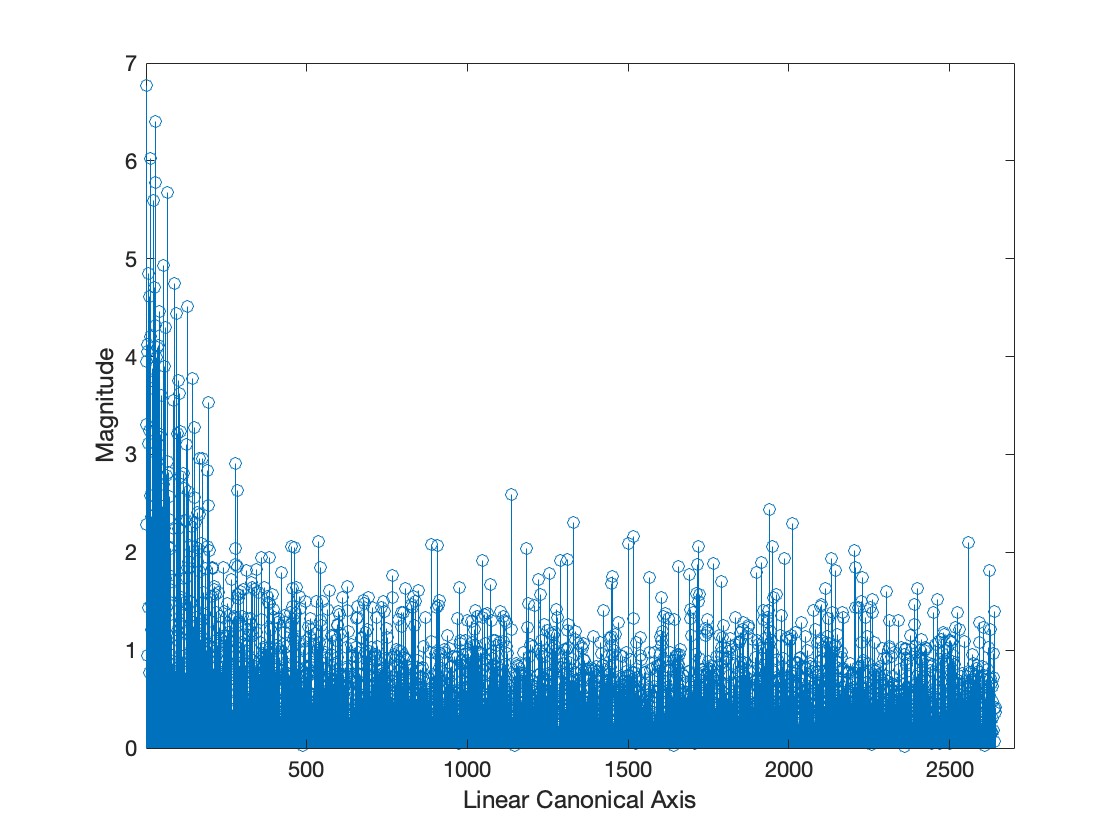}
				\parbox{2.5cm}{\tiny (c) Spectrum after the GLCT.}
			\end{minipage}
		\end{center}
		\vspace*{-10pt}
		\caption{GLCT of bipolar rectangular signals with different parameter matrices on the Minnesota road map, where the parameter matrix of (a) is $[1,0;0,1]$, (b) is $[0,1;-1,0]$, and (c) is $[\sqrt{2}/2, \sqrt{2}/2;-\sqrt{2}/4,3\sqrt{2}/4]$.}
		\vspace*{-3pt}
		\label{fig3}
	\end{figure}

\begin{remark}
	In particular, when $a=\cos \alpha, b=\sin \alpha, c=-\sin \alpha,$ and $d=\cos \alpha$, the GLCT operator will be the GFRFT operator \cite{GFRFT}, when $a=0, b=1, c=-1,$ and $d=0$, the GLCT operator will be the GFT operator.
\end{remark}

\begin{remark}Traditional GFT-based spectral analysis, limited by the Laplacian matrix, overlooks signals' vertex-frequency geometry. The GLCT offers flexibility in shaping spectral characteristics, enabling signal compression, expansion, and rotation in the vertex-frequency plane to form different smoothness patterns.
\end{remark}

\subsection{Graph Uncertainty Principle}
A fundamental property of signals is the Heisenberg uncertainty principle \cite{UC}, which states that there is a fundamental trade-off between the spread of a signal in time and the spread of its spectrum in frequency. The traditional uncertainty principle of time signals is
\begin{equation*}
	\Delta t^{2} \Delta w^{2} \geq \frac{1}{4} ,
\end{equation*}
where $\Delta t^{2}$ and $\Delta w^{2}$ denote the time and frequency spreads, respectively, of the classical continuous-time signal $x(t)$ and its Fourier domain counterpart $\hat{x}(w)$.

After the GFT was proposed, the signal uncertainty principle defined on undirected connected graphs was first derived in \cite{GspectralUC}. The spreads of vector $\mathbf{x}$ in the vertex domain and the GFT domain are respectively defined as
\begin{equation}
\begin{cases}\Delta g^{2}:=\min_{u_{0}\in \mathcal{V} } \frac{1}{\left| \left| \mathbf{x} \right|  \right|^{2}  } \mathbf{x}^{\top } \mathbf{P}^{2}_{u_{0}} \mathbf{x} ,&\\ \Delta s^{2}:=\frac{1}{\left| \left| \mathbf{x} \right|  \right|^{2}  } \sum_{i} \lambda_{\mathbf{A}_{i} } \left| \hat{\mathbf{x} }_{i} \right|^{2}  ,&\end{cases}
\end{equation}
where $\mathbf{P}_{u_{0}}:=\mathrm{diag}(d(u_{0},v_{1}),d(u_{0},v_{2}),...,d(u_{0},v_{N}))$, and $d(u, v)$ denotes the geodesic distance between nodes $u$ and $v$. Thus, the graph uncertainty principle can be represented by the admissible region of $\Delta g^{2}$ and $\Delta s^{2}$, i.e.,
\[
\Gamma_{u_{0}} = \left\{ (s,g): \Delta s^{2}(\mathbf{x}) = s, \Delta g^{2}_{u_{0}}(\mathbf{x}) = g \right\},
\]
	where $\Delta g^{2}_{u_{0}}\left( \mathbf{x}\right)  =\frac{1}{\left| \left| \mathbf{x}\right|  \right|^{2}  } \mathbf{x}^{\top}\mathbf{P}^{2}_{u_{0}}\mathbf{x}$, and the curve can be expressed as
\[
\gamma_{u_{0}} = \min_{\mathbf{x}} \Delta g^{2}_{u_{0}}(\mathbf{x})~\text{subject to}~\Delta s^{2}(\mathbf{x}) = s.
\]

The above uncertainty principle studies the trade-off between the graph signal distribution and its spectral domain based on the specific definition of graph distances. However, a key distinction between the graph signal and the time signal is that time exists in a metric space with a well-defined distance concept, whereas the graph is not a metric space, and, its vertices can be assigned mathematical objects endowed with a richer structure, rather than assigning real numbers to each vertex. An example of such mathematical objects originates from Hilbert space \cite{GHilbert}. In this case, defining distances between vertices is not straightforward.

To overcome the limitations associated with graph distance definitions, this paper adopts an alternative definition based on spread \cite{GUncertainty}. We derive an uncertainty principle for the GLCT domain that does not require any additional distance definitions.

The method characterizes temporal and frequency domain spread based on the percentage of energy falling within the interval $[-T/2, T/2]$ and $[-W/2, W/2]$, respectively, defined as
\begin{equation}
	\frac{\int^{T/2}_{-T/2} \left| x\left( t\right)  \right|^{2}  \mathrm{d} t}{\int^{+\infty }_{-\infty } \left| x\left( t\right)  \right|^{2}  \mathrm{d} t} =\zeta^{2} , \text{and}\ \  \frac{\int^{W/2}_{-W/2} \left| \hat{x} \left( w\right)  \right|^{2}  \mathrm{d} w}{\int^{+\infty }_{-\infty } \left| \hat{x} \left( w\right)  \right|^{2}  \mathrm{d} w} =\eta^{2}. \label{zetaeta}
\end{equation}
Using these two defined spreads, we can further study the uncertainty principle in the GLCT domain in Section \ref{3}.

\subsection{Sampling on Graph Signals}
Sampling is one of the fundamental challenges in GSP. Its objective is to identify conditions for recovering bandlimited graph signals from a subset of values and to devise suitable sampling and recovery strategies \cite{GFTsamp}.

Consider the sampling set $\mathcal{S}=\left( \mathcal{S}_0,\dots,\mathcal{S}_{|\mathcal{S}|-1}\right) , \mathcal{S}_i\in\{0,1, \dots,N-1\}$, its dimension coefficient is defined as $|\mathcal{S}|$. This coefficient is the size of the sampling indexes for obtaining a sampled signal $\mathbf{x}_\mathcal{S}\in\mathbb{C}^{|\mathcal{S}|} (|\mathcal{S}| <N)$ from a bandlimited graph signal $\mathbf{x}\in\mathbb{C}^{N}$. The sampling operator $\mathbf{D}$ is defined as a linear mapping from $\mathbb{C}^N$ to $\mathbb{C}^{|\mathcal{S}|}$, which is expressed as
\begin{equation}
	\mathbf{D}_{i,j}  = \left\{ \begin{array}{rl}
		1, & j=\mathcal{S}_i, \\
		0, & \mathrm{otherwise}.
	\end{array} \right.\label{DS}
\end{equation}
We then recover $\mathbf{x}$ from $\mathbf{x}_\mathcal{S}$ with interpolation operator $\mathbf{R}$, which is a linear mapping from $\mathbb{C}^{|\mathcal{S}|}$ to $\mathbb{C}^N$. Sampling is denoted as $\mathbf{x}_\mathcal{S}=\mathbf{D}\mathbf{x}\in\mathbb{C}^{|\mathcal{S}|}$, and interpolation is represented by $\mathbf{x}_{\mathcal{R}}=\mathbf{Rx}_\mathcal{S}=\mathbf{RDx}\in\mathbb{C}^N$, where $\mathbf{x}_{\mathcal{R}}$ recovers $\mathbf{x}$ either approximately or exactly.

\section{Uncertainty Principle of the GLCT}
\label{3}
In the realm of continuous-time signals, Landau, Pollak, and Slepian extensively explored space-frequency analysis linked to projection operators. Their groundbreaking work, spanning the 1960s, encompassed the uncertainty principle, eigenvalue distributions, and prolate ellipsoid wave functions \cite{PSWfunctions1,PSWfunctions2}. By extending this uncertainty principle to graph signals, we circumvent the need for any extra distance definition, mitigating the drawbacks associated with graph distance definitions. Furthermore, we apply this methodology to establish the uncertainty principle in the GLCT domain.

\subsection{Localization in Vertex and Spectral Domains}
Before defining the uncertainty principle, we first introduce two localization (bandlimited) operators \cite{Glocation}. For a subset of vertices $\mathcal{S}\subseteq \mathcal{V}$, the \textit{vertex-limiting} operator is defined as follows
\begin{equation}
	\mathbf{D}_{\mathcal{S}}=\mathrm{diag}\left( d_1,\ldots ,d_i, \ldots, d_N\right)  ,\label{DSi}
\end{equation}
where, $d_{i}=1$, if $i\in \mathcal{S}$, and $d_{i}=0$ if $i\notin \mathcal{S}, i=1,\ldots,N$. For this operator, it is the corresponding sampling operator $\mathbf{D}_{\mathcal{S}}$ in \eqref{DS}.

Similarly, given the unitary matrix $\mathbf{O}^{\mathbf{M}}$ used in \eqref{OM} and the index subset $\mathcal{F}\subseteq \mathcal{\hat{G}}$, where $\mathcal{\hat{G}} =\left\{ {1,...,N}  \right\}  $ represents the set of all graph spectral  indices, we introduce the \textit{spectral-limiting} operator as
\begin{equation}
\mathbf{B}^{\mathbf{M}}_\mathcal{F}=\mathbf{O}^{-\mathbf{M}}\mathbf{\Sigma}_{\mathcal{F}} \mathbf{O}^{\mathbf{M}}, \label{BMF}
\end{equation}
where, $\mathbf{\Sigma}_{\mathcal{F}}$ is a diagonal matrix whose definition is consistent with $\mathbf{D}_{\mathcal{S}}$, i.e. $\mathbf{\Sigma}_{ii}=1$, if $i\in \mathcal{F}$, and $\mathbf{\Sigma}_{ii}=0$ if $i\notin \mathcal{F}$.
 
According to the properties of diagonal matrices, it is evident that the operators $\mathbf{D}_{\mathcal{S}}$ and $\mathbf{B}^{\mathbf{M}}_\mathcal{F}$ are symmetric and positive semi-definite, with a spectral norm of exactly 1 for both operators. To simplify notation without loss of generality, we use $\mathbf{D}$ and $\mathbf{B}^{\mathbf{M}}$ to represent these two operators, respectively.
 
Therefore, the vector $\mathbf{x}$ is perfectly localized (bandlimited) over $\mathcal{S}$ if
\begin{equation}
	\mathbf{Dx=x},\label{Dxx}
\end{equation}
and perfectly localized (bandlimited) over $\mathcal{F}$ if 
 \begin{equation}
 	\mathbf{B}^{\mathbf{M}}\mathbf{x=x}.\label{Bxx}
 \end{equation}

Here we have a lemma \cite{GUncertainty} about perfect localization.
\begin{lem}
	\label{lem1}
	A vector $\mathbf{x}$ is perfectly localized over both the vertex set $\mathcal{S}$ and the spectral set $\mathcal{F}$ if and only if
	\begin{equation}
		\lambda_{\max} \left( \mathbf{B}^{\mathbf{M}}\mathbf{DB}^{\mathbf{M}}\right) =1.
	\end{equation}
	In such a case, $\mathbf{x}$ is the eigenvector associated with the unit eigenvalue.
\end{lem}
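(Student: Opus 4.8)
The plan is to treat this as the standard statement about the largest eigenvalue of a product of two orthogonal projections. First I would record the structural facts that drive the whole argument: $\mathbf{D}$ is idempotent and self-adjoint because it is a diagonal $0/1$ matrix, and $\mathbf{B}^{\mathbf{M}}$ is idempotent because
\[
(\mathbf{B}^{\mathbf{M}})^2 = \mathbf{O}^{-\mathbf{M}}\mathbf{\Sigma}_{\mathcal{F}}\,(\mathbf{O}^{\mathbf{M}}\mathbf{O}^{-\mathbf{M}})\,\mathbf{\Sigma}_{\mathcal{F}}\,\mathbf{O}^{\mathbf{M}} = \mathbf{O}^{-\mathbf{M}}\mathbf{\Sigma}_{\mathcal{F}}^{2}\,\mathbf{O}^{\mathbf{M}} = \mathbf{B}^{\mathbf{M}},
\]
using that $\mathbf{O}^{-\mathbf{M}}$ inverts $\mathbf{O}^{\mathbf{M}}$ together with $\mathbf{\Sigma}_{\mathcal{F}}^{2}=\mathbf{\Sigma}_{\mathcal{F}}$; the unitarity of $\mathbf{O}^{\mathbf{M}}$ likewise makes $\mathbf{B}^{\mathbf{M}}$ self-adjoint. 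Both operators are therefore orthogonal projections with spectral norm $1$. Setting $\mathbf{C}:=\mathbf{B}^{\mathbf{M}}\mathbf{D}\mathbf{B}^{\mathbf{M}}$, I note that $\mathbf{C}$ is Hermitian positive semi-definite and $\|\mathbf{C}\|\le\|\mathbf{B}^{\mathbf{M}}\|\,\|\mathbf{D}\|\,\|\mathbf{B}^{\mathbf{M}}\|=1$, so every eigenvalue of $\mathbf{C}$ lies in $[0,1]$.

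For the forward implication, suppose $\mathbf{x}\neq\mathbf{0}$ satisfies $\mathbf{D}\mathbf{x}=\mathbf{x}$ and $\mathbf{B}^{\mathbf{M}}\mathbf{x}=\mathbf{x}$. A direct substitution gives $\mathbf{C}\mathbf{x}=\mathbf{B}^{\mathbf{M}}\mathbf{D}\mathbf{B}^{\mathbf{M}}\mathbf{x}=\mathbf{B}^{\mathbf{M}}\mathbf{D}\mathbf{x}=\mathbf{B}^{\mathbf{M}}\mathbf{x}=\mathbf{x}$, so $1$ is an eigenvalue of $\mathbf{C}$ with eigenvector $\mathbf{x}$. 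Combined with the bound from the previous step, this forces $\lambda_{\max}(\mathbf{C})=1$ and identifies $\mathbf{x}$ as an eigenvector for the unit eigenvalue.

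For the converse, suppose $\lambda_{\max}(\mathbf{C})=1$ with a corresponding eigenvector $\mathbf{x}$, so $\mathbf{B}^{\mathbf{M}}\mathbf{D}\mathbf{B}^{\mathbf{M}}\mathbf{x}=\mathbf{x}$. Applying $\mathbf{B}^{\mathbf{M}}$ to both sides and using $(\mathbf{B}^{\mathbf{M}})^2=\mathbf{B}^{\mathbf{M}}$ gives $\mathbf{B}^{\mathbf{M}}\mathbf{D}\mathbf{B}^{\mathbf{M}}\mathbf{x}=\mathbf{B}^{\mathbf{M}}\mathbf{x}$, hence $\mathbf{B}^{\mathbf{M}}\mathbf{x}=\mathbf{x}$, which is the spectral localization. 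To recover the vertex localization I would use an energy identity: by self-adjointness of $\mathbf{B}^{\mathbf{M}}$ and $\mathbf{B}^{\mathbf{M}}\mathbf{x}=\mathbf{x}$,
\[
\|\mathbf{x}\|^2=\langle\mathbf{C}\mathbf{x},\mathbf{x}\rangle=\langle\mathbf{D}\mathbf{B}^{\mathbf{M}}\mathbf{x},\mathbf{B}^{\mathbf{M}}\mathbf{x}\rangle=\langle\mathbf{D}\mathbf{x},\mathbf{x}\rangle=\|\mathbf{D}\mathbf{x}\|^2,
\]
where the last step uses that $\mathbf{D}$ is idempotent and self-adjoint. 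Since $\mathbf{D}$ is an orthogonal projection, the Pythagorean splitting $\|\mathbf{x}\|^2=\|\mathbf{D}\mathbf{x}\|^2+\|(\mathbf{I}-\mathbf{D})\mathbf{x}\|^2$ then forces $\|(\mathbf{I}-\mathbf{D})\mathbf{x}\|=0$, i.e. $\mathbf{D}\mathbf{x}=\mathbf{x}$.

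The norm bounds and idempotency checks are mechanical; the part to get right is the converse, where both localization identities must be extracted from the single eigen-equation $\mathbf{C}\mathbf{x}=\mathbf{x}$. The spectral identity comes cheaply from idempotency, but the vertex identity genuinely needs the orthogonality of $\mathbf{D}$ through the Pythagorean splitting, not merely the norm inequality $\|\mathbf{D}\mathbf{x}\|\le\|\mathbf{x}\|$. The one hypothesis I would double-check is that $\mathbf{O}^{\mathbf{M}}$ is unitary, since the fact that $\mathbf{B}^{\mathbf{M}}$ is a genuine orthogonal projection (Hermitian, idempotent, norm $1$) rests on $\mathbf{O}^{-\mathbf{M}}=(\mathbf{O}^{\mathbf{M}})^{\ast}$, which is exactly the property recorded in the GLCT definition.
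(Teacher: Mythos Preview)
Your proof is correct and follows essentially the same route as the paper's. The forward direction is identical, and for the converse both arguments first deduce $\mathbf{B}^{\mathbf{M}}\mathbf{x}=\mathbf{x}$ from idempotency and then show $\langle\mathbf{D}\mathbf{x},\mathbf{x}\rangle=\|\mathbf{x}\|^2$; the paper phrases this last step via the Rayleigh--Ritz theorem while you use the Pythagorean splitting, which is the same computation.
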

\begin{proof}
The proof is reported in \ref{AA}.
\end{proof}

Equivalently, the perfect localization over sets $\mathcal{S}$ and $\mathcal{F}$ can also be achieved if and only if
\begin{equation}
	\left| \left| \mathbf{B}^{\mathbf{M}}\mathbf{D}\right|  \right|_{2}  =\left| \left| \mathbf{DB}^{\mathbf{M}}\right|  \right|_{2}  =1.
\end{equation}

\subsection{Uncertainty Principle in the GLCT domain}
Using the energy expectations of Eq. \eqref{zetaeta}, we can extend them to the GLCT domain similar to \cite{GUncertainty,GshapesUC}.

Suppose a vector $\mathbf{x}$, vertex subset $\mathcal{S}$ and graph spectral subset $\mathcal{F}$, two subsets are used \eqref{DSi} and \eqref{BMF}. The vectors $\mathbf{Dx}$ and $\mathbf{B}^{\mathbf{M}}\mathbf{x}$ represent the projection of $\mathbf{x}$ on the vertex set $\mathcal{S}$ and the spectral set $\mathcal{F}$ respectively. Then, similar to Eq. \eqref{zetaeta}, $\zeta^{2}$ and $(\eta^{\mathbf{M}})^{2}$ are used to denote the energy expectations falling within the sets $\mathcal{S}$ and $\mathcal{F}$ respectively, as follows
\begin{equation}
\frac{\left| \left| \mathbf{Dx}\right|  \right|^{2}_{2}  }{\left| \left| \mathbf{x}\right|  \right|^{2}_{2}  } =\zeta^{2}, \frac{\left| \left| \mathbf{B}^{\mathbf{M}}\mathbf{x}\right|  \right|^{2}_{2}  }{\left| \left| \mathbf{x}\right|  \right|^{2}_{2}  } =(\eta^{\mathbf{M}})^2.\label{zeta_eta}
\end{equation}

In this paper, we find closed form region boundaries for all admissible pairs $\phi(\zeta, \eta^{\mathbf{M}})$ by generalizing the uncertainty principle of the GFT to the case of the GLCT. In \eqref{zeta_eta}, the graph topology is captured by the matrix $\mathbf{O}^{\mathbf{M}}$, which appears in the definition of the GLCT in \eqref{OM}, inside the operator $\mathbf{B}^{\mathbf{M}}$. Our goal is to establish a trade-off between $\zeta$ and $\eta^{\mathbf{M}}$ by localizing operators $\mathbf{D}$ and $\mathbf{B}^{\mathbf{M}}$, and find a signal that obtains all admissible pairs to prove the uncertainty principle.

We begin with the first uncertainty relation for the localization operators $\mathbf{D}$ and $\mathbf{B^{M}}$, where localization depends on the maximal eigenvalue $\lambda_{\max}$ of the operator $\mathbf{B^{M}DB^{M}}$ \cite{GshapesUC}. That is, $\left\| \mathbf{B^{M}DB^{M}}\right\|_{2} = \lambda_{\max}\left( \mathbf{B}^{\mathbf{M}} \mathbf{DB}^{\mathbf{M}} \right) < 1$. It can be thought that the admissible region $\Gamma$ provides clear bounds for specifying this uncertainty relation on $\mathcal{G}$.

\begin{lem}
	\label{lem2}
	Suppose for a vector $\mathbf{x}$, there are $\left| \left| \mathbf{x} \right|  \right|_{2}   = 1, \left| \left| \mathbf{Dx}\right|  \right|_{2}   = \zeta, \left| \left| \mathbf{B}^{\mathbf{M}}\mathbf{x}\right|  \right|_{2}   = \eta^{\mathbf{M}}$, and $\lambda_{\max } <1$ and if $\lambda_{\max } \leq \zeta^{2} \eta^{2} $, there is an inequality
	\begin{equation}
		\arccos \zeta +\arccos \eta^{\mathbf{M}} \geq \arccos \sqrt{\lambda_{\max } \left( \mathbf{B^{M}DB^{M}}\right)  } , \label{arccoszeta_eta}
	\end{equation}
 and finally reached the upper bound
 \begin{equation}
 	\eta^{\mathbf{M}} \leq \zeta \sqrt{\lambda_{\max} } +\sqrt{\left( 1-\zeta^{2} \right)  \left( 1-\lambda_{\max} \right)  } . \label{etaM}
 	\end{equation}
\end{lem}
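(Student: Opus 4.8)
The plan is to recast the three scalars $\zeta$, $\eta^{\mathbf{M}}$, and $\sqrt{\lambda_{\max}}$ as cosines of angles and then invoke a triangle inequality for angular distance. Since the excerpt already records that $\mathbf{D}$ and $\mathbf{B}^{\mathbf{M}}$ are symmetric, positive semi-definite, with unit spectral norm, and since $\mathbf{O}^{\mathbf{M}}$ is unitary, I would first note that both are orthogonal projections: $\mathbf{D}$ is a $0/1$ diagonal so $\mathbf{D}^{2}=\mathbf{D}$, while $(\mathbf{B}^{\mathbf{M}})^{2}=\mathbf{O}^{-\mathbf{M}}\mathbf{\Sigma}_{\mathcal{F}}^{2}\mathbf{O}^{\mathbf{M}}=\mathbf{B}^{\mathbf{M}}$ because $\mathbf{\Sigma}_{\mathcal{F}}^{2}=\mathbf{\Sigma}_{\mathcal{F}}$ and $\mathbf{O}^{\mathbf{M}}\mathbf{O}^{-\mathbf{M}}=\mathbf{I}$. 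Write $U=\mathrm{range}(\mathbf{D})$ and $W=\mathrm{range}(\mathbf{B}^{\mathbf{M}})$, and set the normalized projections $\mathbf{u}=\mathbf{Dx}/\zeta$ and $\mathbf{w}=\mathbf{B}^{\mathbf{M}}\mathbf{x}/\eta^{\mathbf{M}}$. Using self-adjointness and idempotency, $\langle\mathbf{x},\mathbf{u}\rangle=\tfrac{1}{\zeta}\langle\mathbf{Dx},\mathbf{Dx}\rangle=\zeta$ and likewise $\langle\mathbf{x},\mathbf{w}\rangle=\eta^{\mathbf{M}}$; both are real and non-negative, so the angular distance $d(\mathbf{a},\mathbf{b})=\arccos|\langle\mathbf{a},\mathbf{b}\rangle|$ between unit vectors satisfies $d(\mathbf{x},\mathbf{u})=\arccos\zeta$ and $d(\mathbf{x},\mathbf{w})=\arccos\eta^{\mathbf{M}}$.

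Next I would identify $\arccos\sqrt{\lambda_{\max}}$ with the minimal principal angle $\theta_{\min}(U,W)$. The key computation is $\|\mathbf{B}^{\mathbf{M}}\mathbf{D}\|_{2}^{2}=\lambda_{\max}\bigl((\mathbf{B}^{\mathbf{M}}\mathbf{D})^{\ast}\mathbf{B}^{\mathbf{M}}\mathbf{D}\bigr)=\lambda_{\max}(\mathbf{D}\mathbf{B}^{\mathbf{M}}\mathbf{D})$, which equals $\lambda_{\max}(\mathbf{B}^{\mathbf{M}}\mathbf{D}\mathbf{B}^{\mathbf{M}})=\lambda_{\max}$ since for any $\mathbf{T}$ the products $\mathbf{T}^{\ast}\mathbf{T}$ and $\mathbf{T}\mathbf{T}^{\ast}$ share their nonzero eigenvalues. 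By the standard principal-angle characterization, $\cos\theta_{\min}(U,W)=\max\{|\langle u,w\rangle|:u\in U,\,w\in W,\,\|u\|=\|w\|=1\}=\|\mathbf{B}^{\mathbf{M}}\mathbf{D}\|_{2}=\sqrt{\lambda_{\max}}$. In particular, because $\mathbf{u}\in U$ and $\mathbf{w}\in W$, we get $|\langle\mathbf{u},\mathbf{w}\rangle|\le\sqrt{\lambda_{\max}}$, and since $\arccos$ is decreasing, $d(\mathbf{u},\mathbf{w})=\arccos|\langle\mathbf{u},\mathbf{w}\rangle|\ge\arccos\sqrt{\lambda_{\max}}$.

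The first inequality \eqref{arccoszeta_eta} then follows from the triangle inequality for $d$ applied to the three unit vectors $\mathbf{u},\mathbf{x},\mathbf{w}$, namely $d(\mathbf{u},\mathbf{w})\le d(\mathbf{u},\mathbf{x})+d(\mathbf{x},\mathbf{w})=\arccos\zeta+\arccos\eta^{\mathbf{M}}$. Chaining this with the lower bound from the previous step gives $\arccos\sqrt{\lambda_{\max}}\le d(\mathbf{u},\mathbf{w})\le\arccos\zeta+\arccos\eta^{\mathbf{M}}$, which is exactly \eqref{arccoszeta_eta}. I expect the main obstacle to be making this triangle inequality rigorous in the complex setting: the statement that $d(\mathbf{a},\mathbf{b})=\arccos|\langle\mathbf{a},\mathbf{b}\rangle|$ is a genuine metric (the Fubini--Study distance on complex projective space), which can be secured either by citing that fact directly or by absorbing unit phases into $\mathbf{u},\mathbf{w}$ and reducing to the real spherical metric on the plane spanned by the relevant pair.

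For the upper bound \eqref{etaM}, I would rearrange \eqref{arccoszeta_eta} as $\arccos\eta^{\mathbf{M}}\ge\arccos\sqrt{\lambda_{\max}}-\arccos\zeta$ and apply $\cos$, which reverses the inequality since $\cos$ is decreasing on $[0,\pi]$. Here the hypothesis $\lambda_{\max}\le\zeta^{2}\eta^{2}$, equivalently $\sqrt{\lambda_{\max}}\le\zeta\,\eta^{\mathbf{M}}\le\zeta$, is precisely what forces $\arccos\sqrt{\lambda_{\max}}-\arccos\zeta\ge 0$, so that both sides lie in $[0,\pi]$ and the cosine step is legitimate. Expanding by the difference formula, $\cos\bigl(\arccos\sqrt{\lambda_{\max}}-\arccos\zeta\bigr)=\zeta\sqrt{\lambda_{\max}}+\sqrt{(1-\zeta^{2})(1-\lambda_{\max})}$, yields $\eta^{\mathbf{M}}\le\zeta\sqrt{\lambda_{\max}}+\sqrt{(1-\zeta^{2})(1-\lambda_{\max})}$, which is \eqref{etaM}. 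The only remaining care is to dispose of the degenerate cases $\zeta=0$ or $\eta^{\mathbf{M}}=0$ separately, where $\mathbf{u}$ or $\mathbf{w}$ is undefined but the asserted bounds hold trivially.
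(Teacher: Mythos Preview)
Your proposal is correct and follows essentially the same approach as the paper's proof: normalize the two projections $\mathbf{Dx}$ and $\mathbf{B}^{\mathbf{M}}\mathbf{x}$, observe that their inner products with $\mathbf{x}$ equal $\zeta$ and $\eta^{\mathbf{M}}$, bound the cross inner product by $\sqrt{\lambda_{\max}}$, invoke the angular triangle inequality, and then take cosines using the difference formula. The only cosmetic difference is that you obtain the bound $|\langle\mathbf{u},\mathbf{w}\rangle|\le\sqrt{\lambda_{\max}}$ cleanly via the principal-angle/operator-norm identity $\|\mathbf{B}^{\mathbf{M}}\mathbf{D}\|_{2}=\sqrt{\lambda_{\max}}$, whereas the paper reaches the same bound through a short chain of Cauchy--Schwarz inequalities exploiting $\mathbf{D}^{2}=\mathbf{D}$ and $(\mathbf{B}^{\mathbf{M}})^{2}=\mathbf{B}^{\mathbf{M}}$; your explicit mention of the Fubini--Study metric and the degenerate cases $\zeta=0$, $\eta^{\mathbf{M}}=0$ is also a bit more careful than the paper, but the underlying argument is identical.
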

\begin{proof}
The proof is reported in \ref{AB}.
\end{proof}

\textbf{Lemma} \ref{lem2} provides a general restriction on the set $\phi (\zeta,\eta^{\mathbf{M}} )$ in the upper right corner of the unit square $[\lambda_{\max}, 1]^2$. By simple generalization extension, we obtain similar results for the other three corners. For a subset $\mathcal{S}$, we denote its complement as $\bar{\mathcal{S}}$ such that $\mathcal{V} = \mathcal{S}\cup \bar{\mathcal{S}}$ and $\mathcal{S}\cap \bar{\mathcal{S}} =\emptyset$. Accordingly, we define the vertex projection on $\bar{\mathcal{S}}$ as $\bar{\mathbf{D}}$. Similarly, the projection on the complement $\bar{\mathcal{F}}$ is denoted by $\bar{\mathbf{B}}^\mathbf{M}$. Considering all subdomains of the square $[0, 1]^2$, we obtain the uncertainty principle for the GLCT.
\begin{thm}
	\label{thm1}
	There exists a vector $\mathbf{x}$ such that $\left| \left| \mathbf{x} \right|  \right|_{2}   = 1, \left| \left| \mathbf{Dx}\right|  \right|_{2}   = \zeta, \left| \left| \mathbf{B}^{\mathbf{M}}\mathbf{x}\right|  \right|_{2}   = \eta^{\mathbf{M}}$, thus the admissible pair $\phi(\zeta, \eta^{\mathbf{M}}) \in \Gamma$ is obtained, where
		\begin{equation}
				\begin{aligned}\Gamma &=   \biggl\{  \phi \left( \zeta ,\eta^{\mathbf{M}} \right)  :\\
					& \arccos \zeta +\arccos \eta^{\mathbf{M}} \geq \arccos \sqrt{\lambda_{\max } \left( \mathbf{B^{M}DB^{M}}\right)  } ,\\ &\arccos \sqrt{1-\zeta^{2} } +\arccos \eta^{\mathbf{M}} \geq \arccos \sqrt{\lambda_{\max } \left( \mathbf{B^{M}}\bar{\mathbf{D}} \mathbf{B^{M}}\right)  } ,\\ &\arccos \zeta +\arccos \sqrt{1-(\eta^{\mathbf{M}})^{2} } \geq \arccos \sqrt{\lambda_{\max } \left( \bar{\mathbf{B}}^{\mathbf{M}}\mathbf{D}\bar{\mathbf{B}}^{\mathbf{M}} \right)  } ,\\ &\arccos \sqrt{1-\zeta^{2} } +\arccos \sqrt{1-(\eta^{\mathbf{M}})^{2} } \geq \arccos \sqrt{\lambda_{\max } \left( \bar{\mathbf{B}}^{\mathbf{M}}\bar{\mathbf{D}}\bar{\mathbf{B}}^{\mathbf{M}} \right)}.    \biggr\}     \end{aligned} 
			\end{equation}
\end{thm}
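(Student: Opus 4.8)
The plan is to reduce each of the four defining inequalities of $\Gamma$ to a single application of \textbf{Lemma} \ref{lem2}, one per corner of the unit square $[0,1]^{2}$, and then to intersect the four resulting constraints. \textbf{Lemma} \ref{lem2} already delivers the first inequality verbatim, since it is exactly the angle relation $\arccos \zeta + \arccos \eta^{\mathbf{M}} \geq \arccos \sqrt{\lambda_{\max}\left(\mathbf{B^M DB^M}\right)}$ obtained for the localizing pair $(\mathbf{D},\mathbf{B^M})$. So the substance of the argument is to set up the three complementary configurations and check that \textbf{Lemma} \ref{lem2} applies unchanged to each.

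First I would introduce the complementary projectors $\bar{\mathbf{D}} = \mathbf{I} - \mathbf{D}$ and $\bar{\mathbf{B}}^{\mathbf{M}} = \mathbf{I} - \mathbf{B}^{\mathbf{M}} = \mathbf{O}^{-\mathbf{M}} \mathbf{\Sigma}_{\bar{\mathcal{F}}} \mathbf{O}^{\mathbf{M}}$, which project onto $\bar{\mathcal{S}}$ and $\bar{\mathcal{F}}$. Because $\mathbf{D}$ and $\mathbf{B}^{\mathbf{M}}$ are orthogonal projections (a diagonal $0/1$ matrix, respectively the conjugation of such a matrix by the unitary $\mathbf{O}^{\mathbf{M}}$), their complements are again symmetric, positive semi-definite, and of spectral norm $1$, so they satisfy the structural hypotheses required to invoke \textbf{Lemma} \ref{lem2}. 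The algebraic identities that make the substitution work are the Pythagorean relations
\[
	\left\| \mathbf{Dx} \right\|_{2}^{2} + \left\| \bar{\mathbf{D}}\mathbf{x} \right\|_{2}^{2} = \left\| \mathbf{x} \right\|_{2}^{2} = 1, \qquad \left\| \mathbf{B}^{\mathbf{M}}\mathbf{x} \right\|_{2}^{2} + \left\| \bar{\mathbf{B}}^{\mathbf{M}}\mathbf{x} \right\|_{2}^{2} = 1,
\]
which follow from $\mathbf{D} + \bar{\mathbf{D}} = \mathbf{I}$ and $\mathbf{B}^{\mathbf{M}} + \bar{\mathbf{B}}^{\mathbf{M}} = \mathbf{I}$ together with idempotency, and which give $\left\| \bar{\mathbf{D}}\mathbf{x} \right\|_{2} = \sqrt{1-\zeta^{2}}$ and $\left\| \bar{\mathbf{B}}^{\mathbf{M}}\mathbf{x} \right\|_{2} = \sqrt{1-(\eta^{\mathbf{M}})^{2}}$. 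Feeding the pairs $(\bar{\mathbf{D}}, \mathbf{B}^{\mathbf{M}})$, $(\mathbf{D}, \bar{\mathbf{B}}^{\mathbf{M}})$, and $(\bar{\mathbf{D}}, \bar{\mathbf{B}}^{\mathbf{M}})$ into \textbf{Lemma} \ref{lem2} then produces the second, third, and fourth inequalities, with $\zeta$ and $\eta^{\mathbf{M}}$ replaced by the complementary quantities exactly as they appear in the statement of $\Gamma$.

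Having established the four inequalities as necessary conditions, I would conclude that any admissible pair $\phi(\zeta, \eta^{\mathbf{M}})$ lies in the intersection of the four half-plane constraints, namely $\Gamma$. For the existence and achievability direction I would appeal to \textbf{Lemma} \ref{lem1}: the boundary of each corner constraint is attained by the dominant eigenvector of the corresponding product operator $\mathbf{B^M DB^M}$, $\mathbf{B^M}\bar{\mathbf{D}}\mathbf{B^M}$, $\bar{\mathbf{B}}^{\mathbf{M}}\mathbf{D}\bar{\mathbf{B}}^{\mathbf{M}}$, and $\bar{\mathbf{B}}^{\mathbf{M}}\bar{\mathbf{D}}\bar{\mathbf{B}}^{\mathbf{M}}$, so that these extremal vectors trace out the boundary curves of $\Gamma$ and confirm that the region is genuinely filled rather than merely bounded.

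I expect the main obstacle to be tracking the side conditions of \textbf{Lemma} \ref{lem2} as they transfer to each corner. The lemma is stated under $\lambda_{\max} < 1$ and $\lambda_{\max} \leq \zeta^{2}(\eta^{\mathbf{M}})^{2}$, and for the three complementary configurations these become constraints on $\lambda_{\max}\left(\mathbf{B^M}\bar{\mathbf{D}}\mathbf{B^M}\right)$, $\lambda_{\max}\left(\bar{\mathbf{B}}^{\mathbf{M}}\mathbf{D}\bar{\mathbf{B}}^{\mathbf{M}}\right)$, and $\lambda_{\max}\left(\bar{\mathbf{B}}^{\mathbf{M}}\bar{\mathbf{D}}\bar{\mathbf{B}}^{\mathbf{M}}\right)$ paired with $1-\zeta^{2}$ and $1-(\eta^{\mathbf{M}})^{2}$; verifying that each remains valid in its own corner and that the four arccos bounds glue into a single closed region without gaps is the delicate bookkeeping. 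The other place demanding care is proving tightness of the whole region and not merely of its boundary curves, which requires exhibiting admissible vectors in the interior as well as on the extremal arcs.
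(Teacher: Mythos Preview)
Your proposal is correct and follows essentially the same approach as the paper: the paper's proof of \textbf{Theorem}~\ref{thm1} consists of the single remark that one repeats the argument of \textbf{Lemma}~\ref{lem2} for the remaining three corners, which is exactly your plan of substituting the complementary projectors $(\bar{\mathbf{D}},\mathbf{B^{M}})$, $(\mathbf{D},\bar{\mathbf{B}}^{\mathbf{M}})$, and $(\bar{\mathbf{D}},\bar{\mathbf{B}}^{\mathbf{M}})$ together with the Pythagorean identities $\|\bar{\mathbf{D}}\mathbf{x}\|_2^2=1-\zeta^2$ and $\|\bar{\mathbf{B}}^{\mathbf{M}}\mathbf{x}\|_2^2=1-(\eta^{\mathbf{M}})^2$. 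Your additional remarks on the side conditions of \textbf{Lemma}~\ref{lem2} and on achievability via the dominant eigenvectors go beyond what the paper spells out, but they are in the spirit of the surrounding discussion (cf.\ \textbf{Lemma}~\ref{lem1} and the paragraph on the collapse of the corner curves).
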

%	\begin{equation}
%		\begin{aligned}\Gamma &=   \biggl\{  \phi \left( \zeta ,\eta^{\mathbf{M}} \right)  :\\ &\arccos \zeta +\arccos \eta^{\mathbf{M}} \geq \arccos \sqrt{\lambda_{\max } \left( \mathbf{B^{M}DB^{M}}\right)  } ,\\ &\arccos \sqrt{1-\zeta^{2} } +\arccos \eta^{\mathbf{M}} \geq \arccos \sqrt{\lambda_{\max } \left( \mathbf{B^{M}}\bar{\mathbf{D}} \mathbf{B^{M}}\right)  } ,\\ &\arccos \zeta +\arccos \sqrt{1-(\eta^{\mathbf{M}})^{2} } \geq \arccos \sqrt{\lambda_{\max } \left( \bar{\mathbf{B}}^{\mathbf{M}}\mathbf{D}\bar{\mathbf{B}}^{\mathbf{M}} \right)  } ,\\ &\arccos \sqrt{1-\zeta^{2} } +\arccos \sqrt{1-(\eta^{\mathbf{M}})^{2} } \geq \arccos \sqrt{\lambda_{\max } \left( \bar{\mathbf{B}}^{\mathbf{M}}\bar{\mathbf{D}}\bar{\mathbf{B}}^{\mathbf{M}} \right)}.    \biggr\}     \end{aligned} 
%	\end{equation}
\begin{proof}
	The proof follows a similar structure to that of \textbf{Lemma} \ref{lem2}, demonstrating the inequalities for the remaining three corners.
\end{proof}

Fig. \ref{fig4} illustrates the uncertainty principle of \textbf{Theorem} \ref{thm1}, highlighting some noteworthy characteristics. The blue portion in the figure corresponds to the range of $\Gamma$ in the GFT domain, while in the GLCT domain, the ranges in the lower left and upper right respectively tend to $(0,0)$ and $(1,1)$ as the parameter continuously changes, as indicated by the additional red portion. If we alter the parameters of the GLCT, its range can either expand or shrink, which contrasts sharply with the fixed range of the GFT. And if we set the inequalities given by \textbf{Theorem} \ref{thm1} to equality, we can determine the equations of the curve at the four corners of Fig. \ref{fig4}, namely the upper right, upper left, lower right, and lower left corners. Notably, the upper right corner of the admissible region $\Gamma$ specifies the domain where the maximum energy concentration occurs in the vertex and graph spectral for $\phi(\zeta, \eta^{\mathbf{M}})$. The equation for this curve is obtained
\begin{equation}
		\eta^{\mathbf{M} } =\sqrt{\lambda_{\max } \left( \mathbf{B^{M}DB^{M}}\right)  } \zeta +\sqrt{1-\lambda_{\max } \left( \mathbf{B^{M}DB^{M}}\right)  } \sqrt{1-\zeta^{2} } .
\end{equation}

When the sets $\mathcal{S}$ and $\mathcal{F}$ yield matrices $\mathbf{D}$ and $\mathbf{B}^{\mathbf{M}}$ satisfying the perfect localization condition in \textbf{Lemma} \ref{lem1}, the curve collapses to a single point, precisely the upper right corner. Consequently, similar to \cite{GUncertainty,GshapesUC}, any curve originating from a corner of the admissible region $\Gamma$ in Fig. \ref{fig4} can be mapped onto the corresponding corner, provided that the associated operator meets the criterion for perfect localization.
\begin{figure}[h]
	\centering
	\includegraphics[scale=0.44]{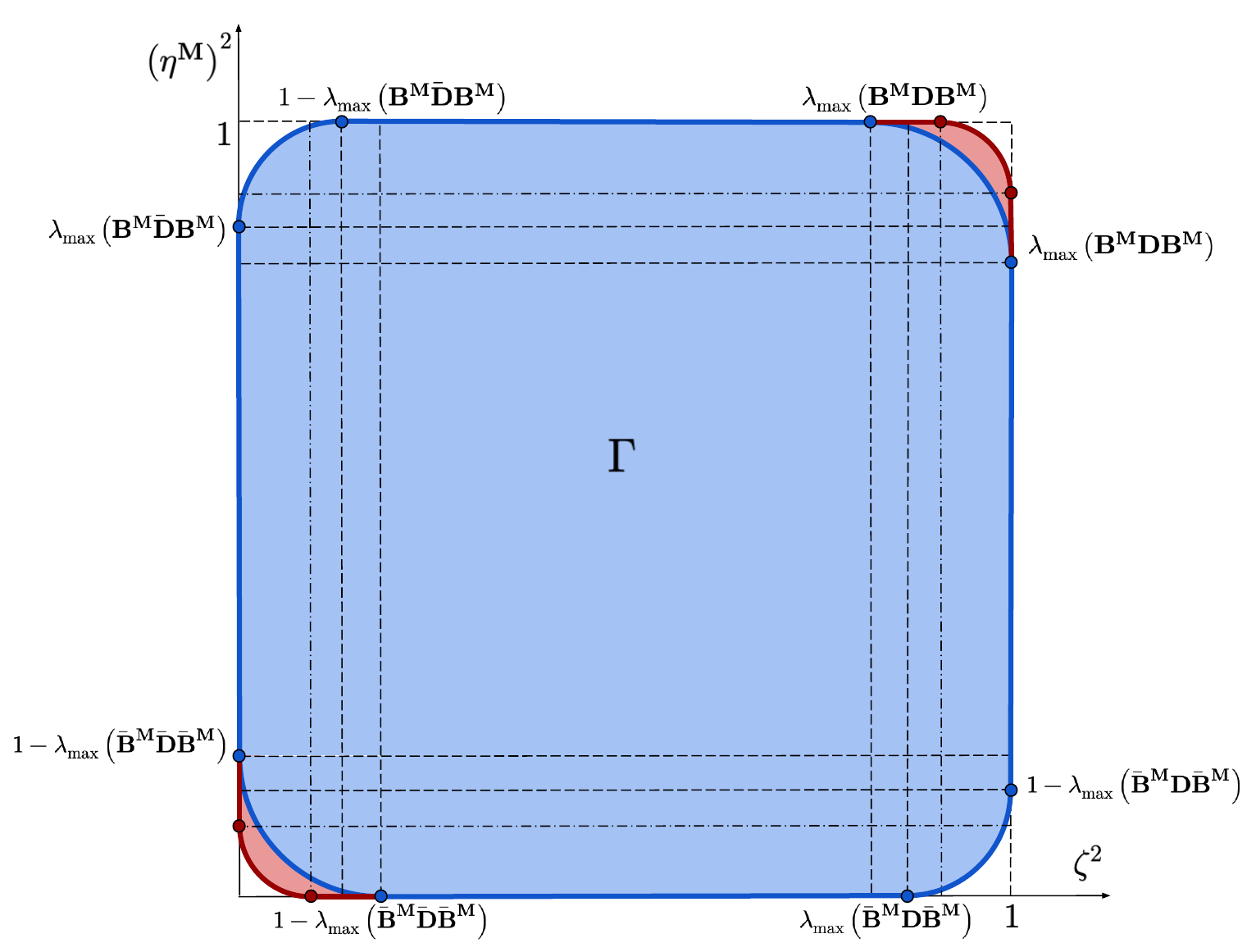}
	\vspace*{-3pt}
	\caption{The admissible region $\Gamma$ in Theorem \ref{thm1}.}
	\label{fig4}
\end{figure}

\subsection{The Connection between the Uncertainty Principle and Sampling}
For the proposed uncertainty principle and as described in \cite{GshapesUC}, there are intriguing connections with the sampling conditions. Examining the upper left corner of the admissible region in Fig. \ref{fig4}, it is evident that if the signal is perfectly localized in the graph spectral subset $\mathcal{F}$, then $\left( \eta^{\mathbf{M}}\right)^2=1$. However, for the vertex domain subset $\mathcal{S}$, we require $\zeta^{2} \neq 0$ to avoid perfect localization of the signal on the complement of $\mathcal{S}$, making it unrecoverable. This is analogous to the perfect recovery conditions given in Section \ref{4}.

If we allow for some energy dissipation in the graph spectral domain, i.e., $\left( \eta^{\mathbf{M}}\right)^2<1$, to ensure $\zeta^{2} \neq 0$, we need to check 
\[
1-\lambda_{\max } \left( \mathbf{B^{M}\bar{D}B^{M}}\right) >\left( \eta^{\mathbf{M}}\right)^2.
\]

This form of condition is highly useful for designing potential sampling strategies. Specifically, the largest eigenvalue, $\lambda_{\max} \left( \mathbf{B^{M}\bar{D}B^{M}} \right)$, of the operator $\mathbf{B^{M}\bar{D}B^{M}}$ is equivalent to the square of the maximum singular value, $\sigma^{2}_{\max} \left( \mathbf{\bar{D} B^{M}} \right)$, of the operator $\mathbf{\bar{D}B^{M}}$. This equivalence suggests the use of $\sigma^{2}_{\max} \left( \mathbf{\bar{D} B^{M}} \right)$ or $\lambda_{\max} \left( \mathbf{B^{M}\bar{D}B^{M}} \right)$ as the objective function for minimization, as exemplified in Table \ref{tab1}. Specifically, $\sigma^{2}_{\max} \left( \mathbf{\bar{D} B^{M}} \right)$ also serves as a criterion for perfect signal recovery, requiring that $\sigma^{2}_{\max} \left( \mathbf{\bar{D} B^{M}} \right)<1$ always holds. These aspects will be further explored in Section \ref{4}.

\section{Sampling Theory of the GLCT}
\label{4}
In this section, we present a novel sampling theory for the GLCT, drawing inspiration from the established sampling theories of the GFT and the GFRFT \cite{GUncertainty,GFTsamp,GFTEfficient,GFTSSS,GPractical,Gdualizing,GFRFTsamp,Ggeneralizedsamp}. Our approach to graph sampling, rooted in the graph linear canonical basis and graph frequency-based sampling methods, tackles the challenge of reconstructing bandlimited graph signals from their subsampled counterparts.

An $\mathbf{M}$-bandlimited graph signal is characterized as a signal with zero graph linear canonical coefficients aligned with the eigenvalues of $\lambda_{|\mathcal{F}|-1}$. This definition implies that $f(i)=0$ for $\lambda, i\geq |\mathcal{F}|$, where $\mathcal{F}$ denotes the set of indices associated with non-zero graph linear canonical coefficients. Specifically, the indices in $\mathcal{F}$ identify those components of the graph linear canonical coefficients that contribute significantly to the signal. We further elaborate on the concept of $\mathbf{M}$-bandlimited graph signals in the subsequent discussion.

%In practice, the sampling method introduced in the GLCT domain not only addresses the limitations of GFT in handling signals with nonlinearity, non-stationarity, and chirp-like features but also demonstrates robustness in processing various real-world signals. By judiciously selecting the optimal GLCT parameters, we can enhance the performance of signal reconstruction, filtering, classification, and clustering tasks.

\subsection{$\mathbf{M}$-Bandlimited Graph Signals}
In the context of GSP, the notion of $\mathbf{M}$-bandlimited signals becomes imperative, necessitating adherence to the previously introduced conditions of perfect localization. Assuming an arbitrary signal $\mathbf{y}$ lacks bandlimited characteristics, preprocessing is required in the form of bandlimiting.

This involves the utilization of a graph spectral domain filter $\mathbf{\Sigma}_{\mathcal{F}}$, defined analogously to Eq. \eqref{BMF}. First, the GLCT is applied to the signal $\mathbf{y}$, yielding $\mathbf{\hat{y}}=\mathbf{O}^{\mathbf{M}}\mathbf{y}$. Subsequently, $\mathbf{\hat{y}}$ undergoes filtering
\[
\mathbf{\hat{x}}=\mathbf{\Sigma}_{\mathcal{F}}\mathbf{\hat{y}}=\mathrm{diag}\left(1,...,1 \right) \mathbf{\hat{y}} .
\]
Following this, an IGLCT is applied to $\mathbf{\hat{x}}$, resulting in $\mathbf{x}=\mathbf{O}^{-\mathbf{M}} \mathbf{\hat{x}}$. Consolidating these steps, we arrive at

\begin{equation}
	\mathbf{x}=\mathbf{O}^{-\mathbf{M}} \mathbf{\Sigma}_{\mathcal{F}} \mathbf{O}^{\mathbf{M}}\mathbf{y},
\end{equation}
where, it is discernible that $\mathbf{O}^{-\mathbf{M}} \mathbf{\Sigma}_{\mathcal{F}} \mathbf{O}^{\mathbf{M}}=\mathbf{B^{M}}$. Through this methodology, a $\mathbf{M}$-bandlimited signal $\mathbf{x}$ is attained, satisfying the conditions of perfect localization as delineated in Eqs. \eqref{Dxx} and \eqref{Bxx}. Therefore, we introduce the following definition for the $\mathbf{M}$-bandlimited set.

\begin{defn}
	The closed subspace of graph signals in $\mathbb{C}^{N}$ with a bandwidth of at most $|\mathcal{F}|$ is denoted as $\mathrm{BL}_{|\mathcal{F}|}(\mathbf{O}^{\mathbf{M}})$, where $\mathbf{O}^{\mathbf{M}}$ is defined as in Eq. \eqref{OM}.
\end{defn}

\subsection{Sampling and Perfect Recovery}
We initiate our exploration by addressing the fundamental challenge of determining conditions and methodologies for the perfect recovery of $\mathbf{x}$ from the sampled graph signal $\mathbf{x}_{\mathcal{S}}$. Drawing upon certain definitions of graph signal sampling outlined in the preliminaries, we present the following theorem, mirroring the principles introduced in \cite{GFTsamp}.
\begin{thm}\label{thm2}
	Let $\mathbf{D} \in \mathbb{C}^{|\mathcal{S}|\times N}$ denote the sampling operator. The interpolation (recovery) operator $\mathbf{R} \in \mathbb{C}^{N\times |\mathcal{S}|}$ is characterized by the following conditions: (1) $\mathbf{R}$ spans the space $\mathrm{BL}_{|\mathcal{F}|}(\mathbf{O}^{\mathbf{M}})$; (2) $\mathbf{RD}$ functions as a projection operator, achieving perfect recovery
	\begin{equation}
		\mathbf{x}=\mathbf{RDx}=\mathbf{Rx}_{\mathcal{S}},\  \  \text{for all} \  \mathbf{x}\in \mathrm{BL}_{|\mathcal{F}|} \left(  \mathbf{O}^{\mathbf{M}} \right),\label{recovery}
	\end{equation}
	where \(\mathbf{x}_{\mathcal{S}}\) denotes the sampled graph signal.
\end{thm}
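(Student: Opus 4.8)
The plan is to reduce perfect recovery to a finite-dimensional invertibility condition on the sampled basis vectors. First I would give a concrete coordinate description of the bandlimited subspace. Since $\mathbf{x}\in \mathrm{BL}_{|\mathcal{F}|}(\mathbf{O}^{\mathbf{M}})$ means that $\hat{\mathbf{x}}=\mathbf{O}^{\mathbf{M}}\mathbf{x}$ is supported only on the index set $\mathcal{F}$, applying the IGLCT gives $\mathbf{x}=\mathbf{O}^{-\mathbf{M}}_{\mathcal{F}}\,\hat{\mathbf{x}}_{\mathcal{F}}$, where $\mathbf{O}^{-\mathbf{M}}_{\mathcal{F}}\in\mathbb{C}^{N\times|\mathcal{F}|}$ collects the columns of the IGLCT matrix $\mathbf{O}^{-\mathbf{M}}$ indexed by $\mathcal{F}$ and $\hat{\mathbf{x}}_{\mathcal{F}}\in\mathbb{C}^{|\mathcal{F}|}$ collects the corresponding nonzero coefficients. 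Because $\mathbf{O}^{\mathbf{M}}$ is invertible, these $|\mathcal{F}|$ columns are linearly independent and form a basis of $\mathrm{BL}_{|\mathcal{F}|}(\mathbf{O}^{\mathbf{M}})$.

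Next I would push the signal through the sampling operator. Combining $\mathbf{x}_{\mathcal{S}}=\mathbf{Dx}$ with the representation above yields $\mathbf{x}_{\mathcal{S}}=\big(\mathbf{D}\,\mathbf{O}^{-\mathbf{M}}_{\mathcal{F}}\big)\hat{\mathbf{x}}_{\mathcal{F}}$, so the recovery problem collapses to inverting the $|\mathcal{S}|\times|\mathcal{F}|$ matrix $\mathbf{D}\mathbf{O}^{-\mathbf{M}}_{\mathcal{F}}$. The coefficients $\hat{\mathbf{x}}_{\mathcal{F}}$, and hence $\mathbf{x}$, are uniquely determined by $\mathbf{x}_{\mathcal{S}}$ precisely when this matrix has full column rank, i.e. $\mathrm{rank}(\mathbf{D}\mathbf{O}^{-\mathbf{M}}_{\mathcal{F}})=|\mathcal{F}|$, which in particular forces $|\mathcal{S}|\ge|\mathcal{F}|$. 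I would then identify this rank condition with the uncertainty-principle quantity flagged at the end of Section \ref{3}: full column rank fails exactly when some nonzero bandlimited signal lies in the kernel of $\mathbf{D}$, equivalently is supported on $\bar{\mathcal{S}}$, which is the statement $\sigma^{2}_{\max}(\bar{\mathbf{D}}\mathbf{B}^{\mathbf{M}})=1$; hence perfect recovery is possible if and only if $\sigma^{2}_{\max}(\bar{\mathbf{D}}\mathbf{B}^{\mathbf{M}})<1$.

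With the rank condition in hand, I would exhibit the interpolation operator explicitly as $\mathbf{R}=\mathbf{O}^{-\mathbf{M}}_{\mathcal{F}}\big(\mathbf{D}\mathbf{O}^{-\mathbf{M}}_{\mathcal{F}}\big)^{\dagger}$, where $(\cdot)^{\dagger}$ denotes the Moore--Penrose pseudoinverse (a genuine left inverse under the rank condition), and verify the two asserted properties. For condition (1), the range of $\mathbf{R}$ is contained in the column space of $\mathbf{O}^{-\mathbf{M}}_{\mathcal{F}}$ and equals it by the full-rank assumption, so $\mathbf{R}$ spans $\mathrm{BL}_{|\mathcal{F}|}(\mathbf{O}^{\mathbf{M}})$. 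For condition (2), the full-column-rank identity $\big(\mathbf{D}\mathbf{O}^{-\mathbf{M}}_{\mathcal{F}}\big)^{\dagger}\big(\mathbf{D}\mathbf{O}^{-\mathbf{M}}_{\mathcal{F}}\big)=\mathbf{I}_{|\mathcal{F}|}$ gives $\mathbf{RDx}=\mathbf{O}^{-\mathbf{M}}_{\mathcal{F}}\hat{\mathbf{x}}_{\mathcal{F}}=\mathbf{x}$ for every $\mathbf{x}\in\mathrm{BL}_{|\mathcal{F}|}(\mathbf{O}^{\mathbf{M}})$, which is the perfect-recovery equation \eqref{recovery}; moreover, since $\mathbf{RD}$ maps every vector into $\mathrm{BL}_{|\mathcal{F}|}(\mathbf{O}^{\mathbf{M}})$, on which it acts as the identity, it satisfies $(\mathbf{RD})^{2}=\mathbf{RD}$ and is therefore a projection onto the bandlimited subspace.

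I expect the main obstacle to be the third step, pinning down the rank/invertibility condition and its equivalence with $\sigma^{2}_{\max}(\bar{\mathbf{D}}\mathbf{B}^{\mathbf{M}})<1$, rather than the algebra of constructing $\mathbf{R}$. The delicate point is that, unlike the GFT setting of \cite{GFTsamp}, the GLCT operator $\mathbf{O}^{\mathbf{M}}$ carries the scaling factor $\beta$ and the chirp factor $\mathbf{\Lambda}^{\xi}$ and need not be unitary, so I must argue the basis property and invertibility directly from the existence of $\mathbf{O}^{-\mathbf{M}}$ rather than from orthonormality, and take care to apply the pseudoinverse identities in their correct full-column-rank form.
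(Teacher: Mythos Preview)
Your proposal is correct and lands on essentially the same argument the paper uses. In the paper, Theorem~\ref{thm2} is stated without a formal proof block; its justification is spread across the discussion immediately following it and the proof of Theorem~\ref{thm3}, where the explicit operator $\mathbf{R}=\mathbf{O}^{-\mathbf{M}}_{|\mathcal{F}|}(\mathbf{D}\mathbf{O}^{-\mathbf{M}}_{|\mathcal{F}|})^{\dagger}$ is shown to span $\mathrm{BL}_{|\mathcal{F}|}(\mathbf{O}^{\mathbf{M}})$ and $\mathbf{RD}$ is verified to be idempotent by the direct computation $(\mathbf{RD})^{2}=\mathbf{O}^{-\mathbf{M}}_{|\mathcal{F}|}(\mathbf{PD}\mathbf{O}^{-\mathbf{M}}_{|\mathcal{F}|})\mathbf{PD}=\mathbf{RD}$. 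Your steps (3)--(4) reproduce this exactly, and your identification of the rank condition $\mathrm{rank}(\mathbf{D}\mathbf{O}^{-\mathbf{M}}_{\mathcal{F}})=|\mathcal{F}|$ with $\sigma_{\max}^{2}(\bar{\mathbf{D}}\mathbf{B}^{\mathbf{M}})<1$ matches the paper's uncertainty-principle link.

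The only organizational difference is that the paper first reaches $\mathbf{R}$ via the Neumann-series form $\mathbf{R}=(\mathbf{I}-\bar{\mathbf{D}}\mathbf{B}^{\mathbf{M}})^{-1}$, deduces $\|\bar{\mathbf{D}}\mathbf{B}^{\mathbf{M}}\|<1$ as the recoverability condition, and then rewrites $(\mathbf{I}-\bar{\mathbf{D}}\mathbf{B}^{\mathbf{M}})\mathbf{x}=\mathbf{D}\mathbf{B}^{\mathbf{M}}\mathbf{x}$ on the bandlimited subspace to arrive at $(\mathbf{D}\mathbf{B}^{\mathbf{M}})^{\dagger}$; you go straight to coordinates through $\mathbf{O}^{-\mathbf{M}}_{\mathcal{F}}$. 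Both routes are short and equivalent; yours is arguably cleaner since it avoids the intermediate $N\times N$ inverse, while the paper's route makes the tie to the uncertainty quantity $\|\bar{\mathbf{D}}\mathbf{B}^{\mathbf{M}}\|$ appear more naturally. Your caution about non-unitarity of $\mathbf{O}^{\mathbf{M}}$ is well placed but, as you note, only invertibility is needed for the column-independence and pseudoinverse identities you invoke.
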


If the original signal $\mathbf{x}$ can be perfectly recovered, then
\begin{equation*}
	\mathbf{x-RDx=x-R\left( I-\bar{D} \right)  x=x-R}\left( \mathbf{I-\bar{D}}\mathbf{ B}^{\mathbf{M}}\right)  \mathbf{x}.
\end{equation*}

Hence, the matrix $\mathbf{R=\left( I-\bar{D} B^{M}\right)} ^{-1}$. The perfect recoverability of $\mathbf{x}$ implies the existence of $\mathbf{R}$, which is tantamount to the invertibility of $\mathbf{\left( I-\bar{D} B^{M}\right)}$. Consequently, we infer that when $\left| \left| \mathbf{\bar{D} B^{M}}\right|  \right|  <1$, the original signal can be perfectly recovered. Herein, the maximum norm of $\mathbf{\bar{D} B^{M}}$ being $1$ restricts our consideration to this scenario, where perfect localization on the complement of the sampling set prevails, rendering signal recovery unattainable. This is closely related to the uncertainty principles discussed in Section \ref{3}. Furthermore, since $\mathbf{x}\in \mathrm{BL}_{|\mathcal{F}|} \left( \mathbf{O}^{\mathbf{M}} \right)$,
\begin{equation*}
\mathbf{\left( I-\bar{D} B^{M}\right)  x=\left( I-\bar{D} \right)  x=Dx=DB^{M}x}.
\end{equation*}

By analyzing the preceding equation, it is evident that $\mathbf{\left( I-\bar{D} B^{M}\right)}$ is equivalent to $\mathbf{DB^{M}}$, leading to the conclusion that $\mathbf{R=\left( DB^{M}\right)}^{-1}$. However, in the case of a degenerate matrix, the rank of $\mathbf{DB^{M}}$ may be less than full. In such instances, we employ the pseudo-inverse to define $\mathbf{R=\left( DB^{M}\right)}^{\dag}$.

We have the following theorem regarding sampling and perfect recovery.
\begin{thm}\label{thm3}
	For $\mathbf{D}$ such that
	\begin{equation}
		\mathrm{rank}\left( \mathbf{DO}^{-\mathbf{M}}_{|\mathcal{F}|} \right)=|\mathcal{F}|,\quad |\mathcal{F}|<N,
	\end{equation}
	where $\mathbf{DO}^{-\mathbf{M}}_{|\mathcal{F}|}$ denotes the first $|\mathcal{F}|$ columns of $\mathbf{DO}^{-\mathbf{M}}$. For all $\mathbf{x} \in \mathrm{BL}_{|\mathcal{F}|}(\mathbf{O^{M}})$, perfect recovery, $\mathbf{x=RDx}$, is achieved by choosing
	\[
\mathbf{R=\left( DB^{M}\right)^{\dag } } =\left( \mathbf{DO^{-M}\Sigma}_{\mathcal{F}} \mathbf{O^{M}}\right)^{\dag }  =\mathbf{O}^{-\mathbf{M}}_{ |\mathcal{F}|}\left( \mathbf{DO}^{-\mathbf{M}}_{ |\mathcal{F}|} \right)^{\dag }  ,
	\]
	where let $\mathbf{P}=( \mathbf{DO}^{-\mathbf{M}}_{ |\mathcal{F}| })^{\dag }$, thus $\mathbf{P} \mathbf{D}\mathbf{O}^{-\mathbf{M}}_{|\mathcal{F}|}=\mathbf{I}_{|\mathcal{F}|\times |\mathcal{F}|}$ is a $|\mathcal{F}|\times |\mathcal{F}|$ identity matrix.
\end{thm}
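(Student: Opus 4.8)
The plan is to reduce the whole statement to the restriction of the GLCT synthesis operator onto the $|\mathcal{F}|$-dimensional bandlimited subspace and then exploit the left-inverse property of the Moore--Penrose pseudoinverse of a full-column-rank matrix. First I would unpack $\mathbf{M}$-bandlimitedness: any $\mathbf{x}\in \mathrm{BL}_{|\mathcal{F}|}(\mathbf{O}^{\mathbf{M}})$ has GLCT spectrum $\mathbf{\hat{x}}=\mathbf{O}^{\mathbf{M}}\mathbf{x}$ supported on the first $|\mathcal{F}|$ indices, so that $\mathbf{x}=\mathbf{O}^{-\mathbf{M}}_{|\mathcal{F}|}\mathbf{c}$, where $\mathbf{c}\in\mathbb{C}^{|\mathcal{F}|}$ collects the nonzero canonical coefficients and $\mathbf{O}^{-\mathbf{M}}_{|\mathcal{F}|}$ is the first $|\mathcal{F}|$ columns of $\mathbf{O}^{-\mathbf{M}}$ (linearly independent, since they are columns of an invertible matrix, so this representation is unique). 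Sampling then gives $\mathbf{x}_{\mathcal{S}}=\mathbf{Dx}=\mathbf{DO}^{-\mathbf{M}}_{|\mathcal{F}|}\mathbf{c}$.

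Next I would verify perfect recovery directly from the reduced form of $\mathbf{R}$. Substituting $\mathbf{R}=\mathbf{O}^{-\mathbf{M}}_{|\mathcal{F}|}(\mathbf{DO}^{-\mathbf{M}}_{|\mathcal{F}|})^{\dag}$ gives $\mathbf{R}\mathbf{x}_{\mathcal{S}}=\mathbf{O}^{-\mathbf{M}}_{|\mathcal{F}|}(\mathbf{DO}^{-\mathbf{M}}_{|\mathcal{F}|})^{\dag}\mathbf{DO}^{-\mathbf{M}}_{|\mathcal{F}|}\mathbf{c}$. The rank hypothesis $\mathrm{rank}(\mathbf{DO}^{-\mathbf{M}}_{|\mathcal{F}|})=|\mathcal{F}|$ says the $|\mathcal{S}|\times|\mathcal{F}|$ matrix $\mathbf{DO}^{-\mathbf{M}}_{|\mathcal{F}|}$ has full column rank, so its pseudoinverse is a genuine left inverse, i.e. $\mathbf{P}\,\mathbf{DO}^{-\mathbf{M}}_{|\mathcal{F}|}=(\mathbf{DO}^{-\mathbf{M}}_{|\mathcal{F}|})^{\dag}\mathbf{DO}^{-\mathbf{M}}_{|\mathcal{F}|}=\mathbf{I}_{|\mathcal{F}|\times|\mathcal{F}|}$. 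This is exactly the identity asserted in the statement, and it makes the middle factor collapse, yielding $\mathbf{R}\mathbf{x}_{\mathcal{S}}=\mathbf{O}^{-\mathbf{M}}_{|\mathcal{F}|}\mathbf{c}=\mathbf{x}$ for every bandlimited $\mathbf{x}$.

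Then I would justify the chain of equalities rewriting $\mathbf{R}=(\mathbf{DB}^{\mathbf{M}})^{\dag}$ into this reduced form. Writing $\mathbf{\Sigma}_{\mathcal{F}}=\mathbf{E}\mathbf{E}^{\top}$ with $\mathbf{E}$ the first $|\mathcal{F}|$ columns of the identity, and using $\mathbf{B}^{\mathbf{M}}=\mathbf{O}^{-\mathbf{M}}\mathbf{\Sigma}_{\mathcal{F}}\mathbf{O}^{\mathbf{M}}$, I would factor $\mathbf{DB}^{\mathbf{M}}=(\mathbf{DO}^{-\mathbf{M}}_{|\mathcal{F}|})(\mathbf{E}^{\top}\mathbf{O}^{\mathbf{M}})$. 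The right factor $\mathbf{E}^{\top}\mathbf{O}^{\mathbf{M}}$ consists of the first $|\mathcal{F}|$ rows of the unitary matrix $\mathbf{O}^{\mathbf{M}}$, hence $\mathbf{E}^{\top}\mathbf{O}^{\mathbf{M}}(\mathbf{O}^{\mathbf{M}})^{\ast}\mathbf{E}=\mathbf{E}^{\top}\mathbf{E}=\mathbf{I}_{|\mathcal{F}|}$, so it has orthonormal rows (full row rank) with $(\mathbf{E}^{\top}\mathbf{O}^{\mathbf{M}})^{\dag}=(\mathbf{O}^{\mathbf{M}})^{\ast}\mathbf{E}=\mathbf{O}^{-\mathbf{M}}_{|\mathcal{F}|}$, while the left factor has full column rank. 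This is precisely the full-rank factorization setting in which the reverse-order law $(\mathbf{AB})^{\dag}=\mathbf{B}^{\dag}\mathbf{A}^{\dag}$ holds, giving $(\mathbf{DB}^{\mathbf{M}})^{\dag}=\mathbf{O}^{-\mathbf{M}}_{|\mathcal{F}|}(\mathbf{DO}^{-\mathbf{M}}_{|\mathcal{F}|})^{\dag}$.

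The hard part will be this last reduction: the reverse-order law for pseudoinverses fails for generic products, so the equality of $(\mathbf{DB}^{\mathbf{M}})^{\dag}$ with the reduced expression cannot be taken for granted and hinges on correctly isolating the full-column-rank and orthonormal-rows structure of the two factors (the latter relying on the unitarity of $\mathbf{O}^{\mathbf{M}}$ established earlier). By contrast, the perfect-recovery claim itself is comparatively routine once $\mathbf{x}$ is expressed through $\mathbf{O}^{-\mathbf{M}}_{|\mathcal{F}|}$ and the left-inverse property of the pseudoinverse is invoked.
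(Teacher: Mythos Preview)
Your argument is correct and at its core uses the same identity the paper relies on, namely $\mathbf{P}\,\mathbf{DO}^{-\mathbf{M}}_{|\mathcal{F}|}=\mathbf{I}_{|\mathcal{F}|}$ coming from the full-column-rank hypothesis. The packaging, however, differs. The paper proceeds indirectly by verifying the two hypotheses of Theorem~\ref{thm2}: it checks that $\mathbf{R}=\mathbf{O}^{-\mathbf{M}}_{|\mathcal{F}|}\mathbf{P}$ has range equal to $\mathrm{BL}_{|\mathcal{F}|}(\mathbf{O}^{\mathbf{M}})$ and that $\mathbf{J}=\mathbf{RD}$ is idempotent (via $\mathbf{J}^2=\mathbf{O}^{-\mathbf{M}}_{|\mathcal{F}|}(\mathbf{PD}\mathbf{O}^{-\mathbf{M}}_{|\mathcal{F}|})\mathbf{PD}=\mathbf{J}$), and then invokes that theorem. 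You instead write a bandlimited $\mathbf{x}$ explicitly as $\mathbf{O}^{-\mathbf{M}}_{|\mathcal{F}|}\mathbf{c}$ and compute $\mathbf{RDx}$ directly, which is shorter and self-contained. You also go further than the paper by actually justifying the displayed equality $(\mathbf{DB}^{\mathbf{M}})^{\dag}=\mathbf{O}^{-\mathbf{M}}_{|\mathcal{F}|}(\mathbf{DO}^{-\mathbf{M}}_{|\mathcal{F}|})^{\dag}$ through a full-rank factorization and the reverse-order law, using the unitarity of $\mathbf{O}^{\mathbf{M}}$; the paper states this identity in the theorem but does not prove it. What the paper's route buys is the explicit projection interpretation of $\mathbf{RD}$, which remains meaningful for signals outside $\mathrm{BL}_{|\mathcal{F}|}(\mathbf{O}^{\mathbf{M}})$.
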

\begin{proof}
	The proof is reported in \ref{AC}.
\end{proof}

\begin{remark}
 When $|\mathcal{S}|<|\mathcal{F}|$, it holds that $\mathrm{rank}(\mathbf{PD} \mathbf{O}^{-\mathbf{M} }_{|\mathcal{F}|}) \leq \mathrm{rank}(\mathbf{P}) \leq |\mathcal{S}| < |\mathcal{F}|$, thus rendering $\mathbf{PDO}^{-\mathbf{M} }_{|\mathcal{F}|}$ incapable of being an identity matrix, and perfect recovery of the original signal is unattainable. In the case of $|\mathcal{S}|=|\mathcal{F}|$, for $\mathbf{PDO}^{-\mathbf{M} }_{|\mathcal{F}|}$ to be the identity matrix, $\mathbf{P}$ must be the inverse of $\mathbf{DO}^{-\mathbf{M} }_{|\mathcal{F}|}$. For $|\mathcal{S}|>|\mathcal{F}|$, $\mathbf{P}$ is the pseudo-inverse of $\mathbf{DO}^{-\mathbf{M}}_{|\mathcal{F}|}$, and in cases where $|\mathcal{S} |\geq |\mathcal{F} |$, perfect recovery of the original signal is possible. For simplicity, we consider the scenario $|\mathcal{S} |= |\mathcal{F} |$.
\end{remark}

\subsection{Qualified Sampling Operator}
The implications of \textbf{Theorem} \ref{thm3} underscore that achieving perfect recovery may be unattainable, even for $\mathbf{M}$-bandlimited graph signals, with arbitrary sampling operators. A qualified sampling operator must, at a minimum, select $|\mathcal{F}|$ linearly independent rows from within $\mathbf{O}^{-\mathbf{M}}_{|\mathcal{F}|}$. We will introduce the definition of a qualified sampling operator, ensuring its adherence to the conditions delineated in \textbf{Theorem} \ref{thm3}. To achieve this, we formally define the adjacency matrix in the graph linear canonical domain, thus
\begin{equation}
	\mathbf{A^{M}=O^{-M}\Lambda_{A} O^{M}},
\end{equation}
where $\mathbf{A^{M}}$ lacks physical significance as it represents a complex matrix, specifically the adjacency matrix. However, when $\mathbf{M}=[0,1;-1,0]$, $\mathbf{O}^{\mathbf{M}}$ degenerates into $\mathbf{V}^{-1}$, and in this case, $\mathbf{A^{M}}$ serves as the shift operator.

For any sampling and recovery operators satisfying \textbf{Theorem} \ref{thm3}, when $\mathbf{x}$ is $\mathbf{M}$-bandlimited, we have
\[
\mathbf{x=RDx=O}^{-\mathbf{M}}_{|\mathcal{F}|}\mathbf{Px}_{\mathcal{S}}=\mathbf{O}^{-\mathbf{M}}_{|\mathcal{F}|}\mathbf{\hat{x}}_{|\mathcal{F}|},
\]
where $\mathbf{\hat{x}}_{|\mathcal{F}|}$ represents the first $|\mathcal{F}|$ values of $\mathbf{\hat{x}}$. Therefore, we derive
\[
\mathbf{x}_{\mathcal{S}}=\mathbf{P}^{-1}\mathbf{\hat{x}}_{|\mathcal{F}|}=\mathbf{P}^{-1}\mathbf{\hat{x}}_{\mathcal{S}}.
\]
As observed, the sampled signal $\mathbf{x}_{\mathcal{S}}$ and the frequency signal $\mathbf{\hat{x}}_{|\mathcal{F}|}$ in the dual domain form a GLCT operator. Therefore, we obtain a new adjacency matrix, leading to the following theorem similar to \cite{GFTsamp}.
\begin{thm}\label{thm4}
	For $\mathbf{x} \in \mathrm{BL}_{|\mathcal{F}|}(\mathbf{O^{M}})$, where $|\mathcal{S}|=|\mathcal{F}|$, and if $\mathbf{D}$ is a qualified sampling operator, then
	\begin{equation}
	\mathbf{A}^{\mathbf{M}}_{\mathcal{S}}=\mathbf{P^{-1}\Lambda_{A_{|\mathcal{F}|}}P} \in \mathbb{C}^{|\mathcal{F}|\times |\mathcal{F}|},
\end{equation}
	where $\mathbf{P}=( \mathbf{DO}^{-\mathbf{M}}_{ |\mathcal{F}| })^{-1}$, and $\mathbf{\Lambda_{A_{|\mathcal{F}|}}}$ is the diagonal matrix of the first $|\mathcal{F}|$ eigenvalues of $\mathbf{\Lambda_{A}}$.
\end{thm}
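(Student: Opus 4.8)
The plan is to show that the stated formula is precisely the operator that makes the sampling map $\mathbf{D}$ intertwine the full graph linear canonical shift $\mathbf{A^M}=\mathbf{O^{-M}\Lambda_{A}O^{M}}$ with a shift acting on the sampled signal $\mathbf{x}_{\mathcal{S}}$. First I would record the consequences of the hypotheses. Since $\mathbf{D}$ is a qualified sampling operator and $|\mathcal{S}|=|\mathcal{F}|$, \textbf{Theorem} \ref{thm3} gives $\mathrm{rank}(\mathbf{DO}^{-\mathbf{M}}_{|\mathcal{F}|})=|\mathcal{F}|$; as $\mathbf{DO}^{-\mathbf{M}}_{|\mathcal{F}|}$ is then a square $|\mathcal{F}|\times|\mathcal{F}|$ matrix of full rank, it is invertible, so $\mathbf{P}=(\mathbf{DO}^{-\mathbf{M}}_{|\mathcal{F}|})^{-1}$ is well defined and $\mathbf{P}^{-1}=\mathbf{DO}^{-\mathbf{M}}_{|\mathcal{F}|}$. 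I would also reuse the identity established just before the statement, $\mathbf{\hat{x}}_{|\mathcal{F}|}=\mathbf{Px}_{\mathcal{S}}$, which exhibits $\mathbf{P}$ as the GLCT of the sampled graph, mapping the sampled vertex signal to its reduced spectral coefficients.

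The central step is a direct computation of the shifted signal under bandlimitedness. Applying $\mathbf{A^M}$ to $\mathbf{x}\in\mathrm{BL}_{|\mathcal{F}|}(\mathbf{O^{M}})$ and writing $\mathbf{O^{M}x}=\mathbf{\hat{x}}$, the bandlimiting definition forces $\hat{x}_{i}=0$ for $i\geq|\mathcal{F}|$, so $\mathbf{\Lambda_{A}\hat{x}}$ is supported on its first $|\mathcal{F}|$ entries and coincides there with $\mathbf{\Lambda_{A_{|\mathcal{F}|}}\hat{x}}_{|\mathcal{F}|}$. Consequently $\mathbf{O^{-M}}$ acts only through its first $|\mathcal{F}|$ columns, giving
\begin{equation*}
\mathbf{A^{M}x}=\mathbf{O}^{-\mathbf{M}}_{|\mathcal{F}|}\mathbf{\Lambda_{A_{|\mathcal{F}|}}}\mathbf{\hat{x}}_{|\mathcal{F}|}.
\end{equation*}
Sampling both sides and substituting $\mathbf{DO}^{-\mathbf{M}}_{|\mathcal{F}|}=\mathbf{P}^{-1}$ together with $\mathbf{\hat{x}}_{|\mathcal{F}|}=\mathbf{Px}_{\mathcal{S}}$ yields $\mathbf{DA^{M}x}=\mathbf{P}^{-1}\mathbf{\Lambda_{A_{|\mathcal{F}|}}}\mathbf{Px}_{\mathcal{S}}$, which exhibits $\mathbf{A}^{\mathbf{M}}_{\mathcal{S}}=\mathbf{P^{-1}\Lambda_{A_{|\mathcal{F}|}}P}$ as the unique operator satisfying $\mathbf{DA^{M}x}=\mathbf{A}^{\mathbf{M}}_{\mathcal{S}}\mathbf{x}_{\mathcal{S}}$ for every bandlimited $\mathbf{x}$.

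To finish, I would interpret this intertwining relation. Because $\mathbf{A}^{\mathbf{M}}_{\mathcal{S}}$ is similar to the diagonal matrix $\mathbf{\Lambda_{A_{|\mathcal{F}|}}}$ through $\mathbf{P}$, it is diagonalized by the sampled GLCT $\mathbf{P}$ and carries exactly the first $|\mathcal{F}|$ eigenvalues of $\mathbf{\Lambda_{A}}$ as its spectrum; this is what legitimizes calling it the adjacency (shift) matrix of the sampled graph, consistent with the $\mathbf{M}=(0,1;-1,0)$ reduction noted earlier. The main obstacle is the bandlimitedness reduction in the central step: one must justify carefully that the tail of $\mathbf{\Lambda_{A}\hat{x}}$ vanishes and that $\mathbf{O^{-M}}$ may be replaced by its truncation $\mathbf{O}^{-\mathbf{M}}_{|\mathcal{F}|}$, which is exactly where the definition of $\mathrm{BL}_{|\mathcal{F}|}(\mathbf{O^{M}})$ and the ordering of the eigenvalues are used. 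Once that truncation is established, the remaining manipulations are routine linear algebra resting on the invertibility of $\mathbf{P}$ secured by the qualified-sampling hypothesis with $|\mathcal{S}|=|\mathcal{F}|$.
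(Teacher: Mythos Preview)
Your proposal is correct and matches the paper's own justification. The paper does not give a formal proof block for this theorem; instead it records the relation $\mathbf{x}_{\mathcal{S}}=\mathbf{P}^{-1}\mathbf{\hat{x}}_{|\mathcal{F}|}$ just before the statement and, just after, verifies the equivalent intertwining identity $\mathbf{x}_{\mathcal{S}}-\mathbf{A}^{\mathbf{M}}_{\mathcal{S}}\mathbf{x}_{\mathcal{S}}=\mathbf{D}(\mathbf{x}-\mathbf{A^{M}x})$ via the same bandlimited truncation you use, which is your identity $\mathbf{DA^{M}x}=\mathbf{A}^{\mathbf{M}}_{\mathcal{S}}\mathbf{x}_{\mathcal{S}}$ rewritten in difference form.
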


In the graph linear canonical domain, $\mathbf{A}^{\mathbf{M}}_{\mathcal{S}}$ is obtained through $\mathbf{A}^{\mathbf{M}}$ sampling. The original graph signal $\mathbf{x}$ and the sampled graph signal $\mathbf{x}_{\mathcal{S}}$ have equivalent frequency content after undergoing the corresponding GLCT. The information preserved by $\mathbf{A}^{\mathbf{M}}_{\mathcal{S}}$ can be expressed as follows
\[
\begin{aligned}
	\mathbf{x}_{\mathcal{S}}-\mathbf{A}^{\mathbf{M}}_{\mathcal{S}}\mathbf{x}_{\mathcal{S}}=&\mathbf{P^{-1}\hat{x}}_{\mathcal{S}} -\mathbf{P^{-1}\Lambda_{A_{|\mathcal{F}|}} PP^{-1}\hat{x}}_{\mathcal{S}} \\
	=&\mathbf{P^{-1}\left( \hat{x}_{\mathcal{S}} -\Lambda_{A_{|\mathcal{F}|}} \hat{x}_{\mathcal{S}} \right)} \\
	=&\mathbf{DO^{-M}_{|\mathcal{F}|}\left( I-\Lambda_{A_{|\mathcal{F}|}} \right)  O^{M}_{|\mathcal{F}|}x} \\
	=&\mathbf{D\left( x-A^{M}x\right)},
\end{aligned}
\]
where, $\mathbf{x}\in \mathrm{BL}_{|\mathcal{F}|} \left(  \mathbf{O}^{\mathbf{M}} \right)$, and $\mathbf{\hat{x}}_{|\mathcal{F}|}=\mathbf{\hat{x}}_{\mathcal{S}}$.

By reordering and rearranging the eigenvalues in the matrix of the GLCT, \textbf{Theorem} \ref{thm4} applies to all graph signals that are $\mathbf{M}$-bandlimited in the GLCT domain.

\subsection{Optimal Sampling Operator}
When sampling graph signals, aside from determining the appropriate sample size, devising a strategy for selecting sampling node locations is crucial as the positions of the sampled nodes play a pivotal role in the performance of the reconstruction algorithm. Given the multiple choices of $|\mathcal{F}|$ linearly independent rows in $\mathbf{O}^{-\mathbf{M}}_{|\mathcal{F}|}$, our objective is to opt for an optimal set that minimizes the impact of noise. One strategy involves selecting sampling positions to minimize the normalized MSE (NMSE) \cite{GUncertainty,GFTsamp,GFTEfficient,GFTSSS,GPractical,Gdualizing,GFRFTsamp,Ggeneralizedsamp}. Consider the noise $e$ introduced during the sampling process
\[ \mathbf{x}_\mathcal{S} = \mathbf{Dx} + e, \]
where $\mathbf{D}$ is qualified. The recovered signal $\mathbf{x}_{\mathcal{R}}$ is then given by
\[ \mathbf{x}_{\mathcal{R}} = \mathbf{Rx}_\mathcal{S} = \mathbf{RDx} + \mathbf{R}e = \mathbf{x} + \mathbf{R}e. \]
Consequently, the bound of the recovery error, obtained through the norm and Cauchy-Schwarz inequality, is
\[ ||\mathbf{x}_{\mathcal{R}} - \mathbf{x}||_2 = ||\mathbf{R}e||_2 = ||\mathbf{O}^{-\mathbf{M}}_{|\mathcal{F}|}\mathbf{P}e||_2 \leq ||\mathbf{O}^{-\mathbf{M}}_{|\mathcal{F}|}||_2 ||\mathbf{P}||_2||e||_2. \]
Given that $||\mathbf{O}^{-\mathbf{M}}_{|\mathcal{F}|}||_2$ and $||e||_2$ are fixed, the objective is to minimize the spectral norm of $\mathbf{P}$, which is the pseudo-inverse of $\mathbf{D} \mathbf{O}^{-\mathrm{M}}_{|\mathcal{F}|}$. The norm considered here can be either the 2-norm or the Frobenius norm. The optimal norm is selected based on optimization algorithms and numerical methods.

In the following, we provide several alternative strategies for selecting sampling sets in the GLCT domain based on the A, D, E, T-optimal design approaches proposed by \cite{GUncertainty,GFTsamp,GFTEfficient,GFTSSS,Design}.

\textit{1) Minimization of the Frobenius Norm of $\mathbf{P}$ (MinFro):} In this strategy, the objective is to directly minimize the Frobenius norm of the matrix $\mathbf{P}$, aiming to minimize the NMSE between the original and recovery signals. The method involves selecting columns from the matrix $\mathbf{O}^{-\mathbf{M}}_{|\mathcal{F}|}$ to minimize the Frobenius norm of the pseudo-inverse matrix $\mathbf{P}$, which can reduce the impact of noise in the restored signal, thereby improving the energy concentration of the signal,
	\begin{equation}
		\mathbf{D}^\mathrm{opt}=\arg\min_\mathbf{D}~||\mathbf{P}||_{F}=\arg\min_\mathbf{D}~||\mathbf{\mathbf{\Sigma_{\mathcal{F}}O^{M}D}}||_{F}.
	\end{equation}
	
\textit{2) Maximizing signal energy concentration in the vertex domain (MaxVertex):} This approach aims to select sampling nodes that capture the maximum signal energy, ensuring that the signal energy is maximized at the sampling nodes,
		\begin{equation}
			\mathbf{D}^\mathrm{opt} = \arg\max_{\mathbf{D}} \sum^{ \mathcal{S}}_{i=1} \left| \mathbf{x}_i \right|^2=\arg\max_\mathbf{D}~\left| \mathbf{Dx}  \right|^{2}.
	\end{equation}
	
\textit{3) Maximizing signal energy concentration in the spectral domain (MaxSpec):} This method selects nodes in the spectral domain that capture the maximum signal energy, i.e., the set of nodes with the highest spectral energy, ensuring optimal concentration of signal at sampling points,
		\begin{equation}
			\mathbf{D}^\mathrm{opt}=\arg\max_\mathbf{D}~\sum^{\left| \mathcal{S}\right|  }_{i=1} \left| \left( \mathbf{O^{M}x}\right)_{i}  \right|^{2}=\arg\max_\mathbf{D}~\left| \mathbf{O^{M}Dx}  \right|^{2}.
		\end{equation}
	
\textit{4) Maximization of the Volume of the Parallelepiped Formed With the Columns of $\mathbf{O}^{-\mathbf{M}}_{|\mathcal{F}|}$ (MaxVol):} This strategy aims to select a set $\mathcal{S}$ of columns from the matrix $\mathbf{O}^{-\mathbf{M}}_{|\mathcal{F}|}$ to maximize the volume of the parallelepiped constructed with the chosen columns. Essentially, this approach involves completing the matrix $\mathbf{O}^{-\mathbf{M}}_{|\mathcal{F}|}$. The volume can be computed as the determinant of the matrix $\mathbf{O}^{\mathbf{M}}_{|\mathcal{F}|}  \mathbf{D}\mathbf{O}^{-\mathbf{M}}_{|\mathcal{F}|}$, which will improve the condition number of the sampling matrix and thus enhance the energy concentration of the signal,
	\begin{equation}
		\mathbf{D}^{\mathrm{opt} } =\arg \max_{\mathbf{D} }~\prod^{|\mathcal{F}|}_{i=1} \lambda_{i} \left( \mathbf{O}^{\mathbf{M}}_{|\mathcal{F}|} \mathbf{D}\mathbf{O}^{-\mathbf{M}}_{|\mathcal{F}|}\right).
	\end{equation}			
	
\textit{5) Minimization of the singular value of $\mathbf{P}$ (MinPinv):} Due to $\lambda_{i} \left( \mathbf{B^{M}DB^{M}}\right)  =\sigma^{2}_{i} \left( \mathbf{B^{M}D}\right)  =\sigma^{2}_{i} \left( \mathbf{\Sigma_{\mathcal{F}} O^{M}D}\right)  =\sigma^{2}_{i} \left( \mathbf{P}\right) $, in the case of uncorrelated noise, this is equivalent to minimizing $\sum^{\left| F\right|  }_{i} \sigma^{2}_{i}$, which will reduce the amplification of noise in the signal recovery process,
	\begin{equation}
		\mathbf{D}^\mathrm{opt}=\arg\min_\mathbf{D}~\sum^{\left| \mathcal{F}\right|  }_{i=1} \sigma^{2}_{i}(\mathbf{P}).
	\end{equation}
	
\textit{6) Maximization of the minimum singular value of $\mathbf{D} \mathbf{O}^{-\mathrm{M}}_{|\mathcal{F}|}$ (MaxSigMin):} The strategy is designed to leverage the 2-norm to minimize NMSE, which is equivalent to minimizing the largest singular value of $\mathbf{P}$. In other words, we aim to maximize the smallest singular value $\sigma_\mathrm{min}$ of $\mathbf{D}\mathbf{O}^{-\mathbf{M}}_{|\mathcal{F}|}$ for every qualified $\mathbf{D}$, thereby reducing the impact of noise in the recovered signal,
	\begin{equation}
		\mathbf{D}^\mathrm{opt}=\arg\max_\mathbf{D}~\sigma_\mathrm{min}(\mathbf{DO}^{-\mathbf{M}}_{|\mathcal{F}|}).
	\end{equation}
	
\textit{7) Maximization of the singular value of $\mathbf{\mathbf{D} \mathbf{O}^{-\mathbf{M}}_{|\mathcal{F}|}}$ (MaxSig):} The strategy aims to select columns of the matrix $\mathbf{O}^{-\mathrm{M}}_{|\mathcal{F}|}$ to maximize its singular values. Although this strategy is not directly related to NMSE optimization, it is easy to implement and demonstrates good performance in practice, improving the condition number of the sampling matrix and thus enhancing the energy concentration of the signal,
		\begin{equation}
			\mathbf{D}^\mathrm{opt}=\arg\max_\mathbf{D}~\sum^{\left| \mathcal{F}\right|  }_{i=1} \sigma^{2}_{i}(\mathbf{\mathbf{D} \mathbf{O}^{-\mathbf{M}}_{|\mathcal{F}|}}).
		\end{equation}
The maximization of signal energy concentration in 2) and 3) is equivalent to ensuring perfect localization of the graph signal in the vertex and spectral domains, as described by Eqs. \ref{Dxx} and \ref{Bxx}.

	\begin{remark}
	Given $ \mathbf{x = B^{M}y}$, where $\mathbf{y}$ is fixed, maximizing signal energy concentration is tantamount to maximizing $|\mathbf{DB^{M}}|^2$ or $|\mathbf{O^{M}DB^{M}}|^2$. Since the operator matrix $\mathbf{B^{M}DB^{M}}$ is a positive semi-definite, 2) and 3) are approximately equivalent to the A-optimal design and D-optimal design \cite{Design}.
\end{remark}

By achieving perfect localization of signals in both vertex and spectral domains, it implies that the signals primarily concentrate on specific parts or nodes of the graph. Maximizing their energy on the sampling subset $\mathcal{S}$ ensures that the sampled signals encompass the main local structures and features of the graph, thereby capturing the essential information and dynamics of the signals more effectively. Thus, the perfect localization of signals in vertex and spectral domains is closely intertwined with maximizing signal energy on the sampling subset $\mathcal{S}$. We summarize the A, D, E, T-optimal designs related to the seven sampling strategies along with their objective functions in Table \ref{tab1}.
\begin{table}
	\caption{Optimal design and objective function of sampling methods}
	\label{tab1}
	%\small
	\footnotesize
	\begin{tabular}{clll}
		\toprule
		Optimal Designs&Objectives&Objective Functions & Methods\\
		\midrule
		A-optimal & $\min\mathrm{tr} \left[ \left(\mathbf{B^{M}DB^{M}} \right)^{-1}  \right]$  &$\min \  \left| \left| \mathbf{\Sigma_{\mathcal{F}} O^{M}D} \right|  \right|_{F}$&MinFro \\
		A-optimal & $\min\mathrm{tr} \left[ \left(\mathbf{B^{M}DB^{M}} \right)^{-1}  \right]$ &$	\max \left|  \mathbf{Dx}  \right|^{2}$ &MaxVertex\\
		D-optimal & $\min \mathrm{det} \left[\left(\mathbf{B^{M}DB^{M}}\right)^{-1}\right]$ &$	\max \left|  \mathbf{O^{M}Dx}  \right|^{2}$ &MaxSpec\\
		D-optimal & $\min \mathrm{det} \left[\left(\mathbf{B^{M}DB^{M}}\right)^{-1}\right]$ &$\max \prod^{|\mathcal{F}|}_{i=1} \lambda_{i} \left( \mathbf{O}^{\mathbf{M}}_{|\mathcal{F}|} \mathbf{D}\mathbf{O}^{-\mathbf{M}}_{|\mathcal{F}|}\right)$ &MaxVol\\
		E-optimal & $\min \left| \left| \left(  \mathbf{DB^{M}}\right)^{\dag }  \right| \right|_{2}$ &$\min \sum^{\left| \mathcal{F}\right|  }_{i=1} \sigma^{2}_{i} \left( \mathbf{\Sigma_{\mathcal{F}} O^{M}D}\right)  $&MinPinv \\
		E-optimal & $\min \left| \left| \left(  \mathbf{DB^{M}}\right)^{\dag }  \right| \right|_{2}$&$\max \  \sigma_{\min } \left(\mathbf{D O^{-M}_{|\mathcal{F}|}}\right)$ &MaxSigMin\\
		T-optimal & $\max \mathrm{tr} \left[ \left(\mathbf{B^{M}DB^{M}} \right)  \right]$ &$\max \sum^{\left| \mathcal{F}\right|  }_{i=1} \sigma^{2}_{i}\left(\mathbf{\mathbf{D} \mathbf{O}^{-\mathbf{M}}_{|\mathcal{F}|}}\right)$ &MaxSig\\
		\bottomrule
	\end{tabular}
\end{table}

As the graph signals are all $\mathbf{M}$-bandlimited after satisfying the localization condition, i.e., $\mathbf{x} \in \mathrm{BL}_{|\mathcal{F}|}(\mathbf{O^{M}})$, in other words, for the vertex set $\mathcal{V}$, its sampling subset $\mathcal{S}$ contains all the features and information of its graph signal. It satisfies the property of submodular functions, meaning that the marginal benefit of adding locally decreases with the increase of selected elements. More specifically, when we add an element to subset $\mathcal{S}$, although it may increase the number of covered elements, the marginal contribution of each newly added element gradually diminishes as the selected elements increase. Therefore, to address the sampling strategies mentioned above, we employ a greedy algorithm to tackle this selection challenge, aiming to determine the \textit{optimal sampling operator}. Greedy algorithms have been shown to closely approximate the global optimum when solving similar matrix approximation optimization problems \cite{Greedy}. Since strategies 1), 2), and 7) are essentially equivalent, as are strategies 3) and 4), and strategies 5) and 6), we present Algorithm \ref{alg1}, which summarizes the three sampling strategies 4), 6), and 7).
\begin{algorithm}[H]
	\caption{Optimal Sampling Operator based on MaxSig, MaxSigMin, or MaxVol}\label{alg1}
		\footnotesize
	\begin{algorithmic}
		\STATE \textbf{Input:} $\mathbf{O}^{-\mathbf{M}}_{|\mathcal{F}|}$ (first $|\mathcal{F}|$ columns of $\mathbf{O}^{-\mathbf{M}}$), $|\mathcal{S}|$ (number of samples), $N$ (number of rows in $\mathbf{O}^{-\mathbf{M}}_{|\mathcal{F}|}$)
		\STATE \textbf{Output:} $\mathcal{S}$ (sampling set)
		\FOR{$k = 1:|\mathcal{S}|$}
		\STATE $s = \arg\max_{j\in \{1:N\}-\mathcal{S}}\sum^{|\mathcal{F}|}_{i=1} \sigma^{2}_{i} \left(\left( \mathbf{O}^{-\mathbf{M}}_{|\mathcal{F}|}\right)_{\mathcal{S}\cup \{j\}} \right)$
		\STATE \quad \textbf{or} $s = \arg\max_{j\in \{1:N\}-\mathcal{S}}\sigma_\mathrm{min} \left(\left(\mathbf{O}^{-\mathbf{M}}_{|\mathcal{F}|}\right)_{\mathcal{S}+\{j\}}\right)$
		\STATE \quad \textbf{or} $s = \arg\max_{j\in \{1:N\}-\mathcal{S}} \prod^{|\mathcal{F}|}_{i=1} \lambda_{i} \left( \left( \left( \mathbf{O}^{-\mathbf{M}}_{|\mathcal{F}|} \right)^{\ast }_{\mathcal{S}\cup \{j\}} \left( \mathbf{O}^{-\mathbf{M}}_{|\mathcal{F}|} \right)_{\mathcal{S}\cup \{j\}} \right) \right)$
		\STATE $\mathcal{S} = \mathcal{S} + \{s\}$
		\ENDFOR
		\RETURN $\mathcal{S}$
	\end{algorithmic}
\end{algorithm}

We compared our proposed seven sampling strategies with random sampling under the GLCT sampling framework. As shown in Fig. \ref{fig5}, the NMSE is reported, defined as the error between the original and recovered signals divided by the original signal for each node. (a) is based on a ring graph with N = 32 nodes, and the signal is a $\mathbf{M}$-bandlimited signal with parameters $\alpha=0.8, \beta=32, \xi=0.5k+1, k=1,...,N$. The frequency bandwidth is $|\mathcal{F}|=4$. (b) is based on a swiss roll graph with N = 256 nodes, and the signal is a $\mathbf{M}$-bandlimited signal with same parameters. The frequency bandwidth is $|\mathcal{F}|=32$. We compare the NMSE of eight sampling methods as the number of sampled nodes increases for two different graph signals. We observed that, with a small number of sampling nodes, all methods except random sampling could effectively recover the signal.
% Alogrithm 1 of strategy 2)
\begin{figure}[h]
	\begin{center}
		\begin{minipage}[t]{0.45\linewidth}
			\centering
			\includegraphics[width=\linewidth]{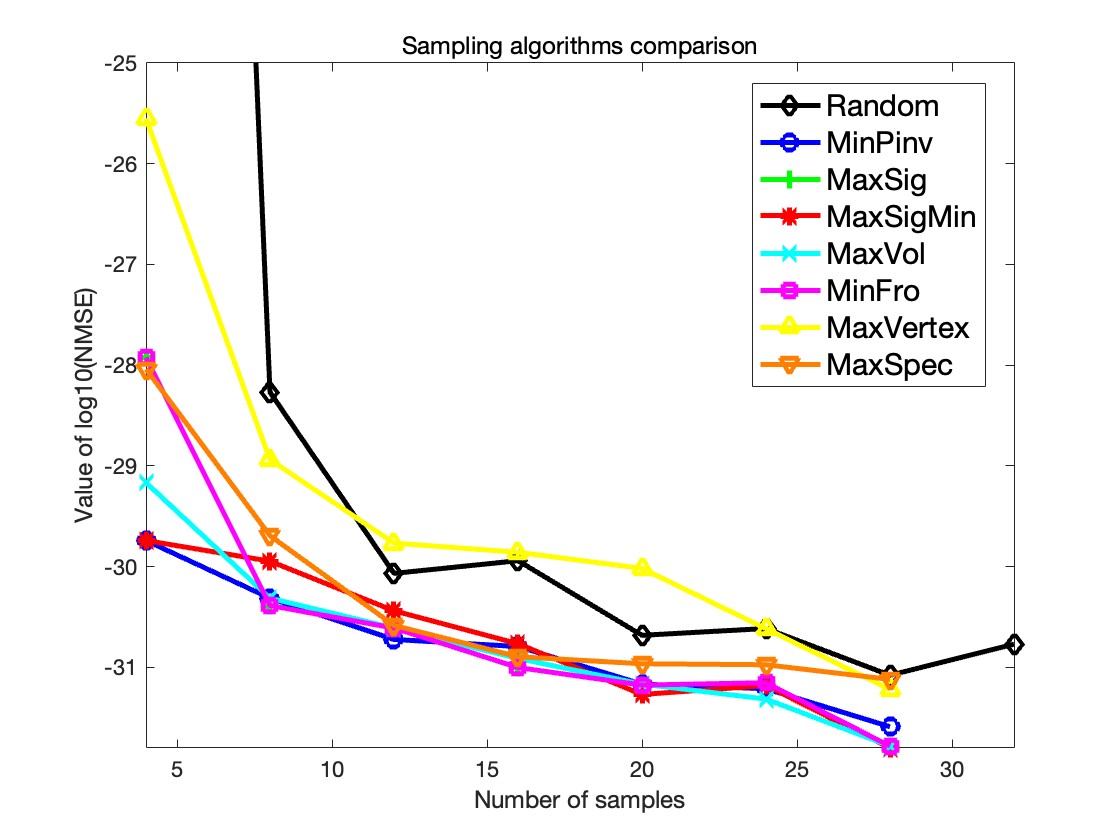}
			\parbox{1.5cm}{\tiny(a) Ring graph.}
		\end{minipage}
		\begin{minipage}[t]{0.45\linewidth}
			\centering
			\includegraphics[width=\linewidth]{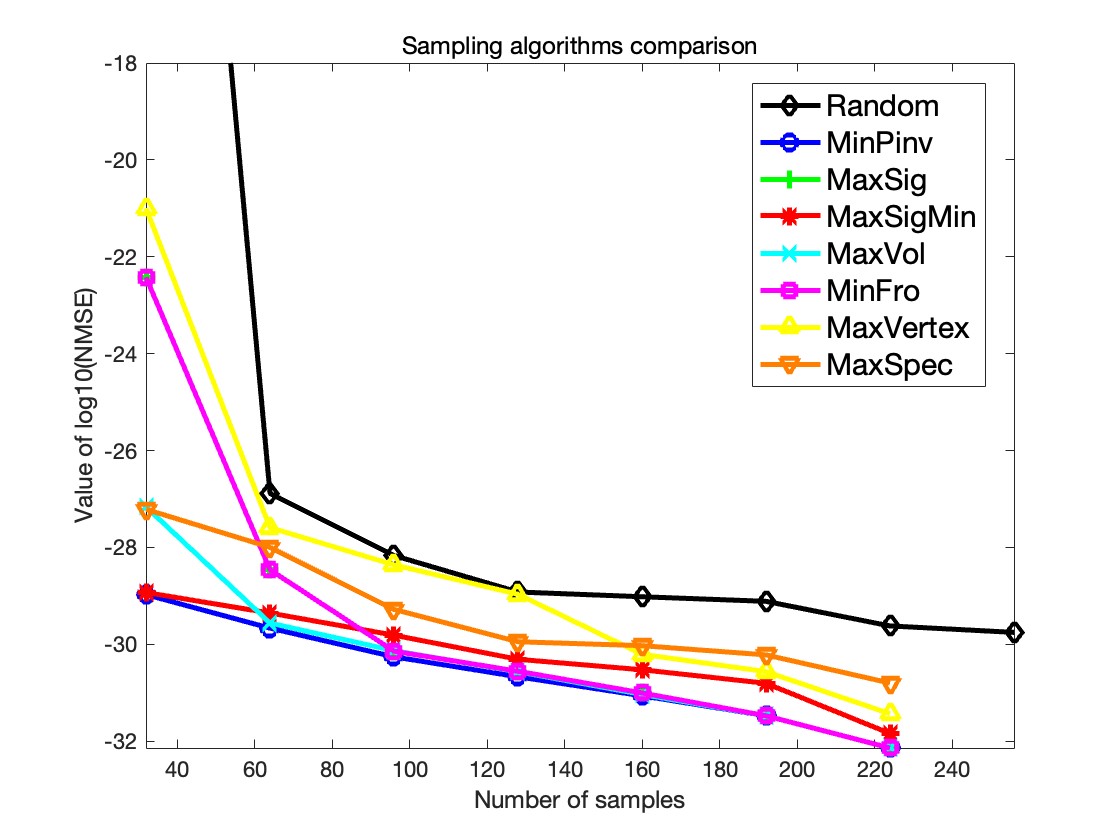}
			\parbox{1.7cm}{\tiny (b) Swiss roll graph.}
		\end{minipage}
	\end{center}
	\caption{NMSE vs. number of samples for different sampling strategies.}
	\vspace*{-3pt}
	\label{fig5}
\end{figure}

%\subsection{Fast Sampling Strategy}
\section{Applications and Numerical Experiments}
\label{5}
In this section, we conduct simulation experiments and test GLCT sampling and recovery in a semi-supervised classification application \cite{GFRFTsamp} and clustering of bus test cases \cite{IEEE118}. Semi-supervised classification seeks to categorize a vast dataset with the assistance of a limited number of labeled instances. In the context of GSP, the smoothness of labels across the graph can be observed through low-pass characteristics in the graph spectral domain. Simultaneously, in the case of the bus test case with 118 nodes, the generator dynamically produces smooth graph signals, justifying the validity of the $\mathbf{M}$-bandlimited assumption. We utilize four frameworks (Laplacian-based GFT sampling, GFT sampling, GFRFT sampling, and GLCT sampling) for GSP, comparing the NMSE of graph signal recovery.
\subsection{Simulations}
We conducted simulations using the graph signal processing toolbox (GSPBox) \cite{GSPBox} for two examples. The first example involves a David sensor network with $N=500$ nodes. We set the bandwidth $|\mathcal{F}|=60$, and the $\mathbf{M}$-bandlimited parameters to $\alpha=0.7, \beta=N/2, \xi=0.5k+0.5, k=1,...,N$. \textcolor{red}{The graph signal $\mathbf{x}$ is the real part of $\mathbf{f}_1 + 0.5 \mathbf{f}_2 + 2\mathbf{f}_3$, i.e., $\mathbf{x}=\mathrm{real}(\mathbf{f}_1 + 0.5 \mathbf{f}_2 + 2\mathbf{f}_3)$} obtained through $\mathbf{M}$-bandlimited operation, where $\mathbf{f}_i$ represents the $i$-th column of $\mathbf{O^{-M}}$. The second example involves the vehicular traffic data in Rome. We set the bandwidth $|\mathcal{F}|=50$, and the $\mathbf{M}$-bandlimited parameters to $\alpha=0.9, \beta=N, \xi=0.5k+0.7, k=1,...,N$. The graph signal is the real vehicular traffic data in Rome \cite{GsampRome} obtained through $\mathbf{M}$-bandlimited operation. The original graph signals are depicted in Fig. \ref{fig6}.
\begin{figure}[h]
	\begin{center}
		\begin{minipage}[t]{0.45\linewidth}
			\centering
			\includegraphics[width=\linewidth]{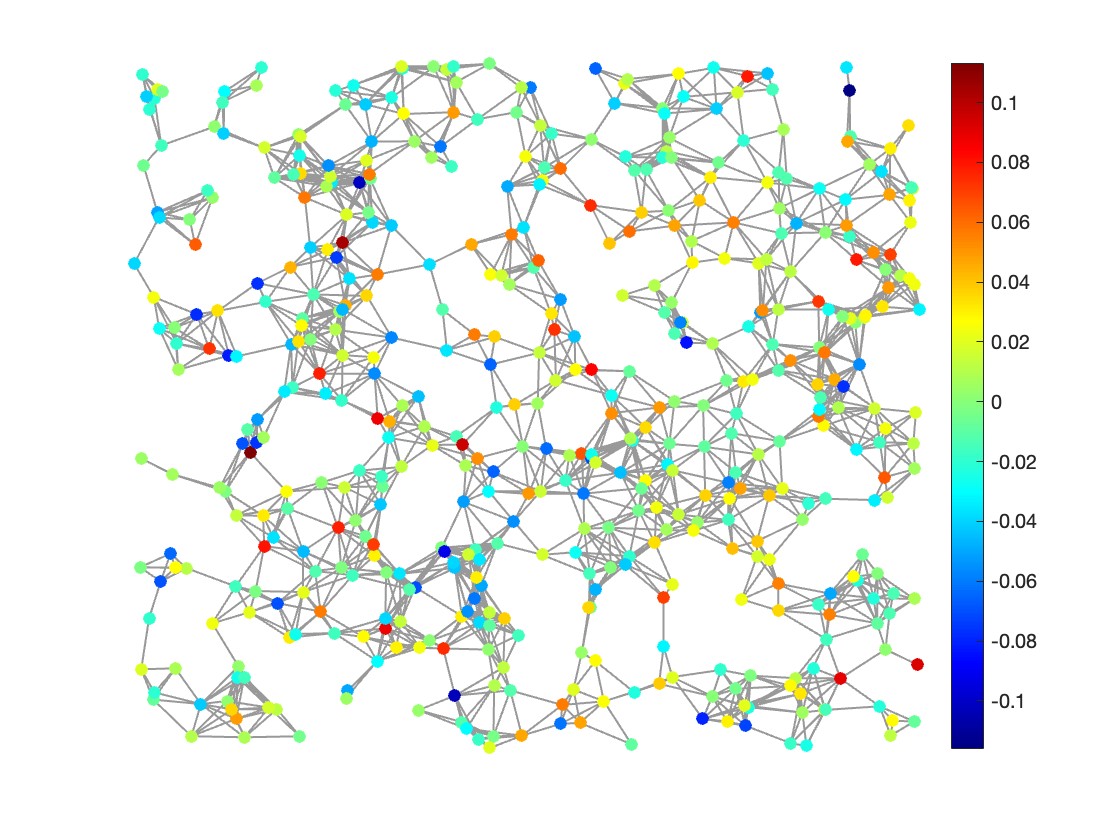}
			\parbox{5cm}{\tiny(a) Example 1: $500$-node David sensor network graph.}
		\end{minipage}
		\begin{minipage}[t]{0.45\linewidth}
			\centering
			\includegraphics[width=\linewidth]{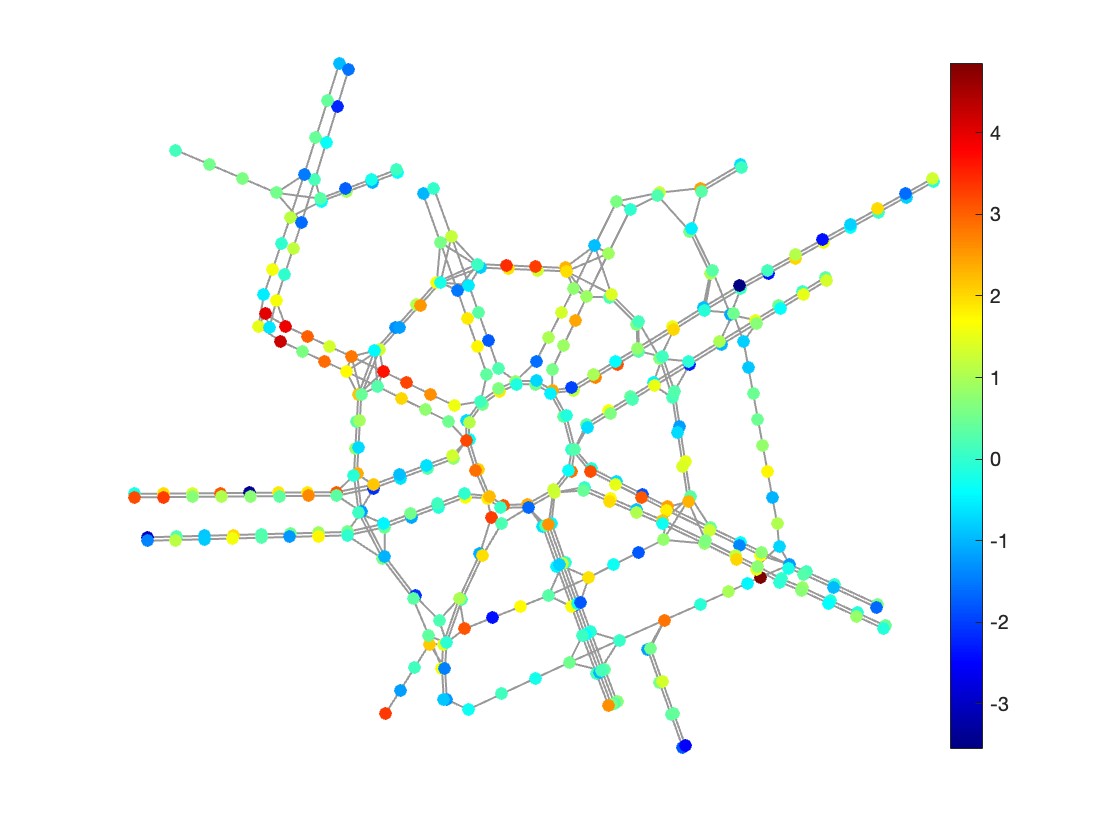}
			\parbox{5cm}{\tiny (b) 	Example 2: Roman vehicular traffic data graph.}
		\end{minipage}
	\end{center}
	\caption{Original graph signal.}
	\vspace*{-3pt}
	\label{fig6}
\end{figure}

For both examples, we performed sampling and recovery of the graph signals using four sampling frameworks based on seven optimal sampling strategies. All signal recovery formulas utilize Eq. \eqref{recovery} from \textbf{Theorem} \ref{thm2}. The accuracy of the recovery was measured using the NMSE. Fig. \ref{fig7} illustrates the locations of sampled nodes using MinPinv sampling strategy. The number of sampled nodes, $|\mathcal{S}|$, is 60 for (a), (b), (c), and (d), and 50 for (e), (f), (g), and (h). Fig. \ref{fig8} displays the signals recovered through Laplacian-based GFT sampling in (a) and (e), GFT sampling in (b) and (f), GFRFT sampling in (c) and (g), and GLCT sampling in (d) and (h). The NMSE results for the David sensor network and Roman vehicular traffic data, across four sampling frameworks employing seven optimal strategies, are detailed in Tables \ref{tab2} and \ref{tab3}, respectively. Note that the parameters selected for our GFRFT and GLCT are consistent with those specified in the aforementioned two examples. 
%sampling nodes graph
\begin{figure}[!h]
	\begin{center}
		\begin{minipage}[t]{0.23\linewidth}
			\centering
			\includegraphics[width=\linewidth]{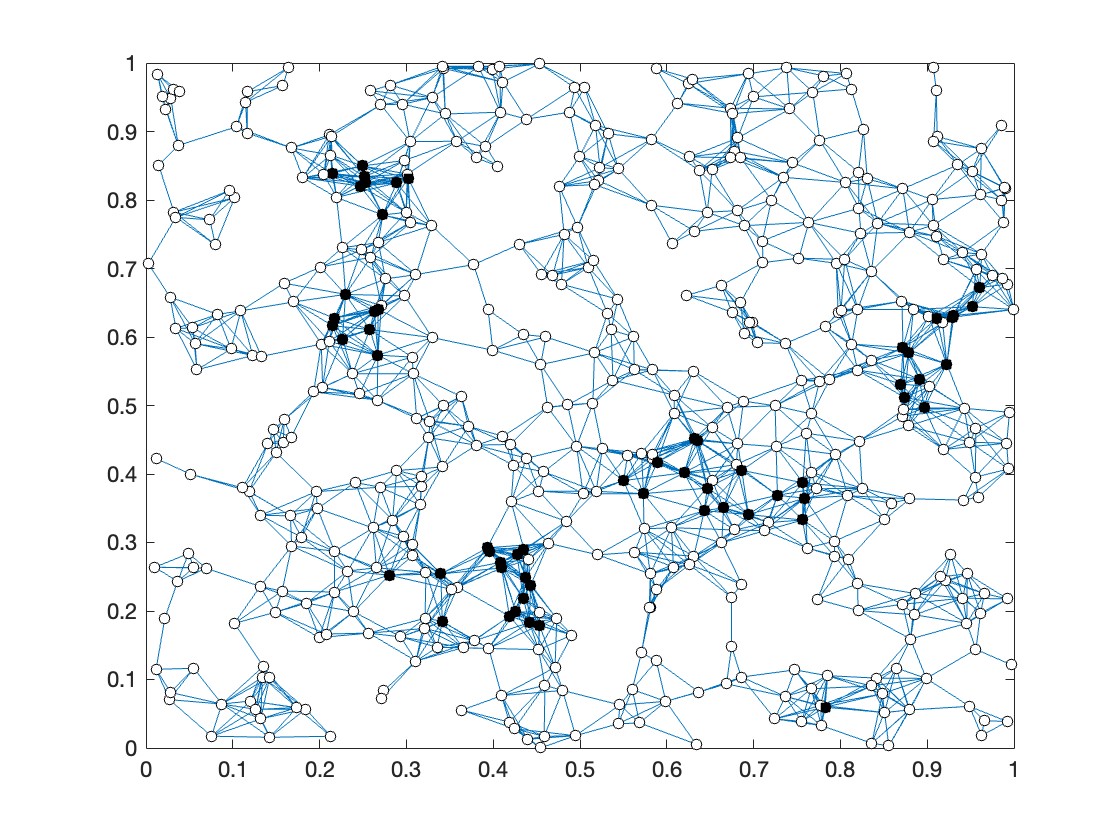} 
			\parbox{3.2cm}{\tiny (a) David sensor network samples from Laplacian-based GFT sampling.}
		\end{minipage}
		\begin{minipage}[t]{0.23\linewidth}
			\centering
			\includegraphics[width=\linewidth]{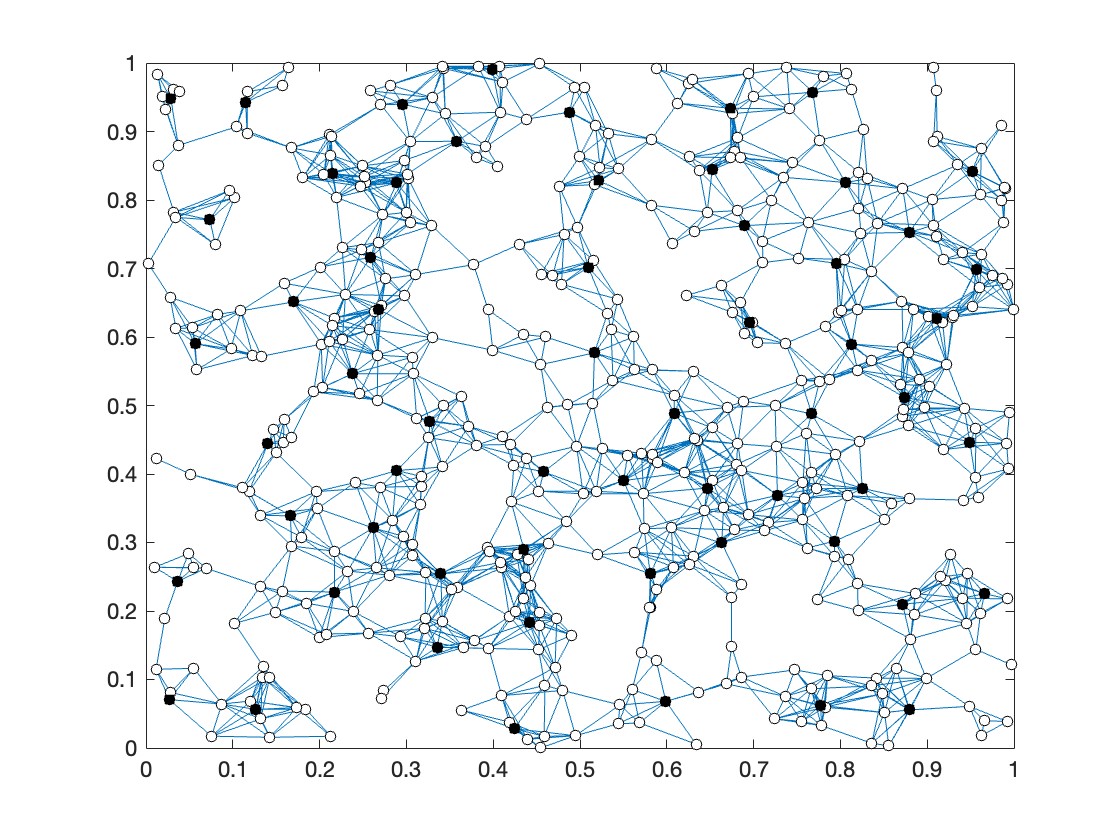}
			\parbox{3cm}{\tiny (b) David sensor network samples from GFT sampling.}
		\end{minipage}
		\begin{minipage}[t]{0.23\linewidth}
			\centering
			\includegraphics[width=\linewidth]{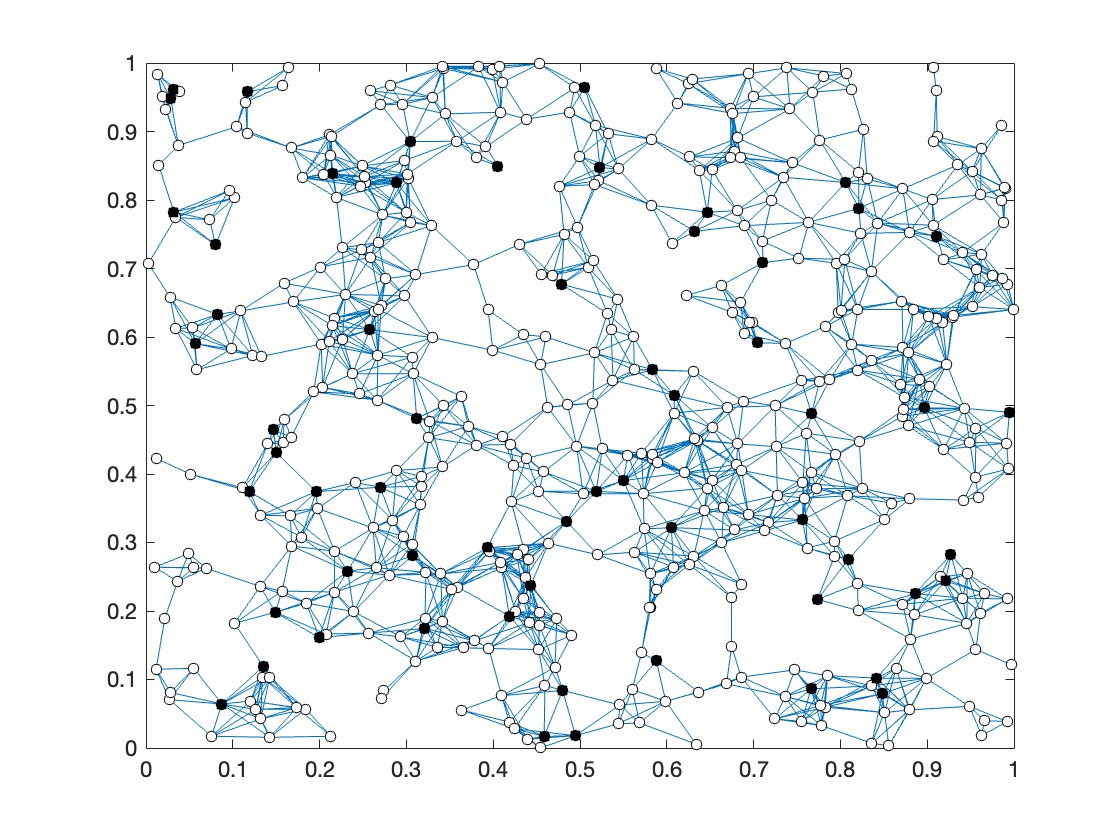}
			\parbox{3cm}{\tiny (c) David sensor network samples from GFRFT sampling.}
		\end{minipage}
		\begin{minipage}[t]{0.23\linewidth}
			\centering
			\includegraphics[width=\linewidth]{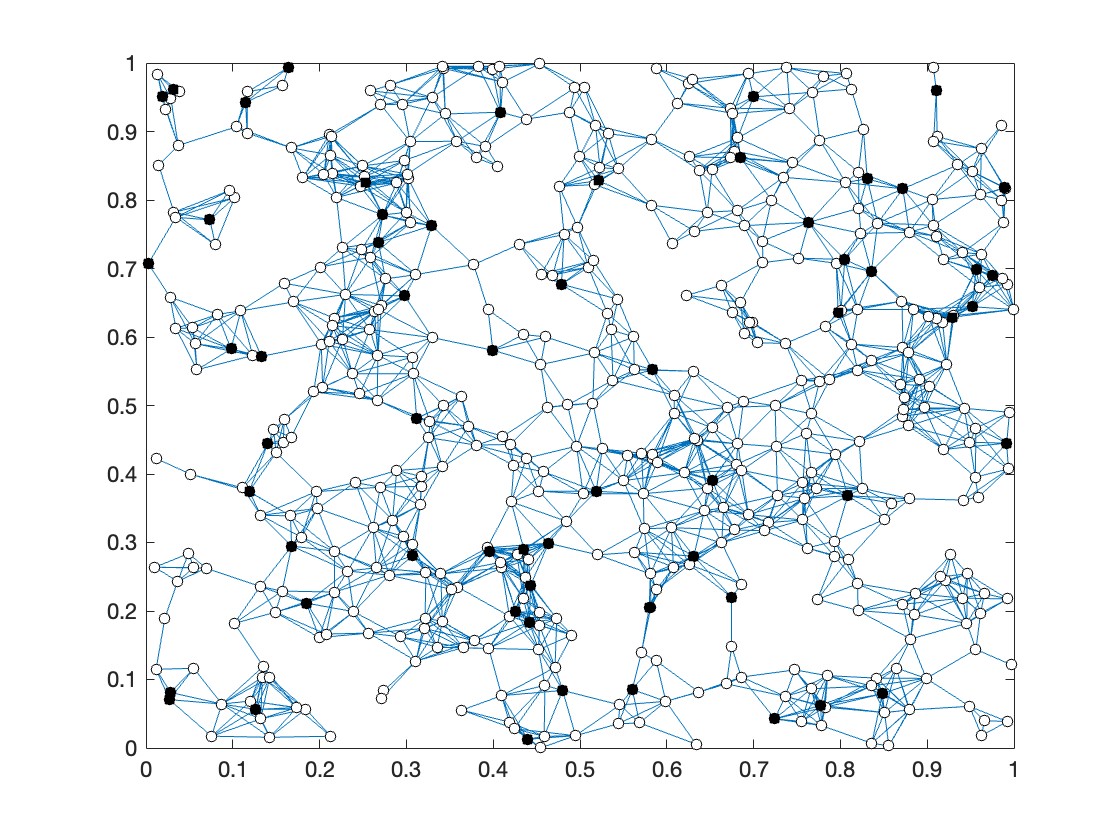} 
			\parbox{3cm}{\tiny (d) David sensor network samples from GLCT sampling.}
		\end{minipage}
		\begin{minipage}[t]{0.23\linewidth}
			\centering
			\includegraphics[width=\linewidth]{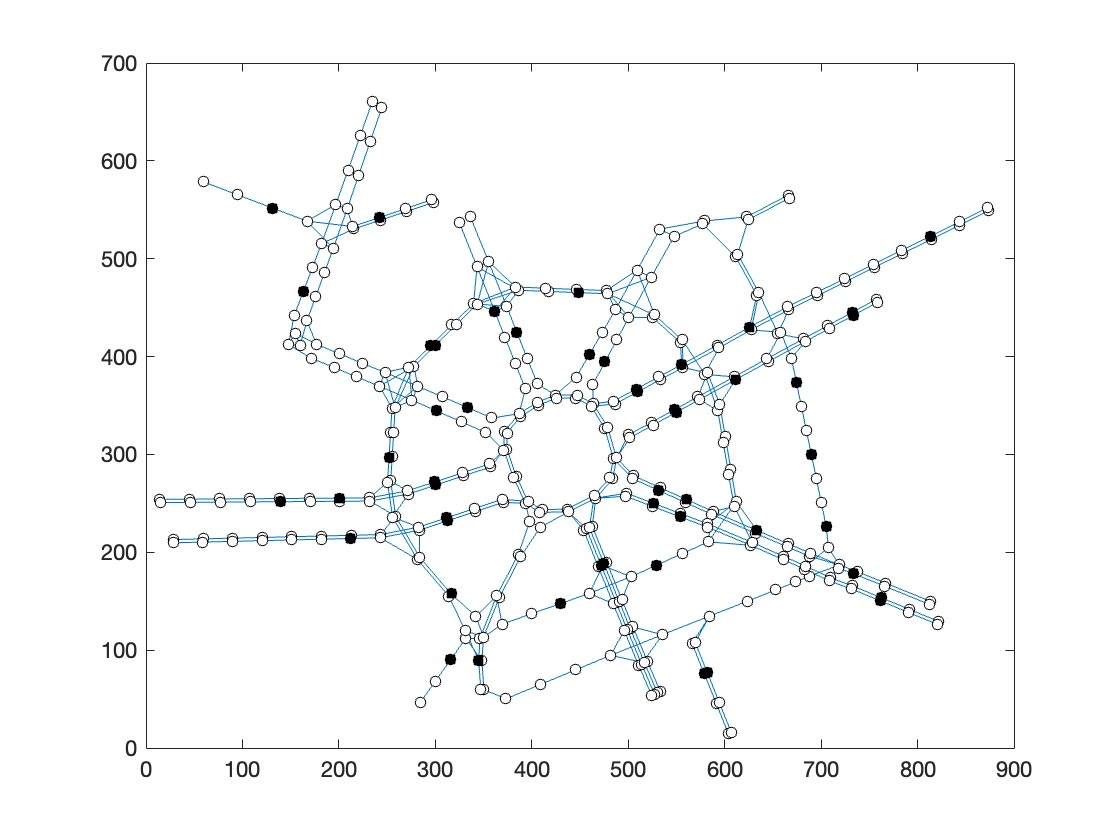} 
			\parbox{3.4cm}{\tiny (e) Roman vehicular traffic data samples from Laplacian-based GFT sampling.}
		\end{minipage}
		\begin{minipage}[t]{0.23\linewidth}
			\centering
			\includegraphics[width=\linewidth]{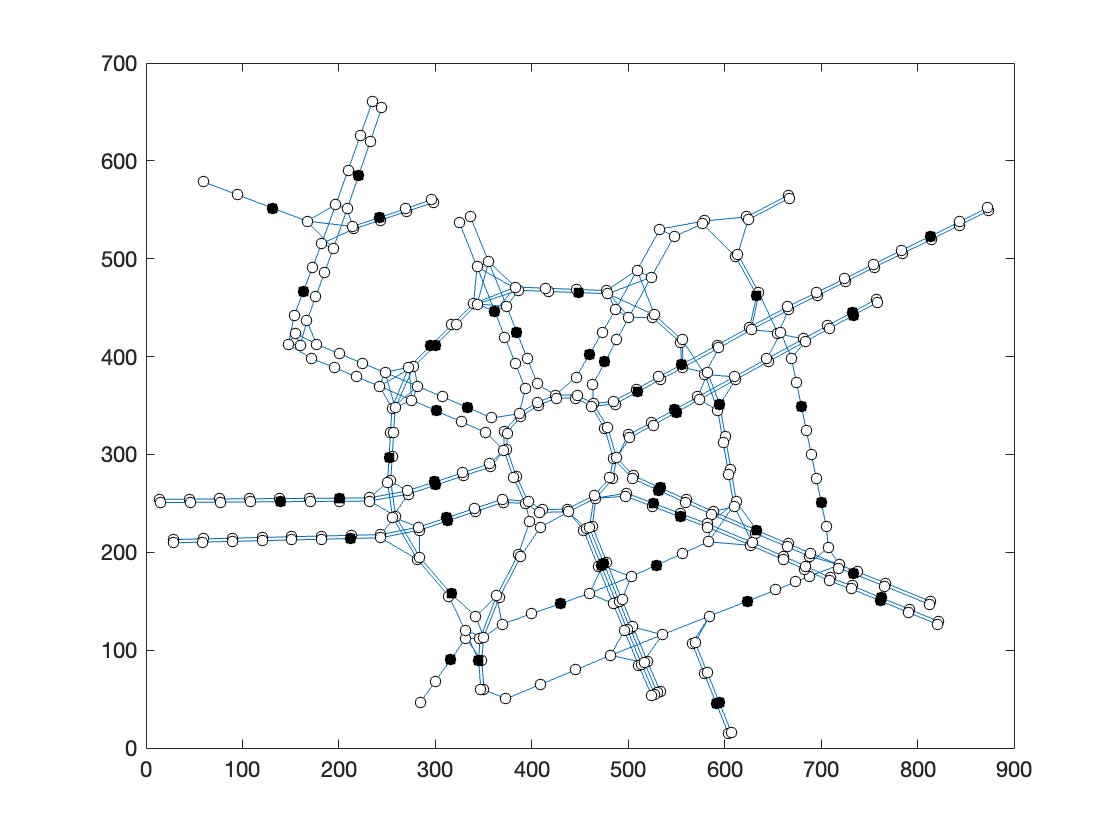}
			\parbox{2.7cm}{\tiny (f) Roman vehicular traffic data samples from GFT sampling.}
		\end{minipage}
		\begin{minipage}[t]{0.23\linewidth}
			\centering
			\includegraphics[width=\linewidth]{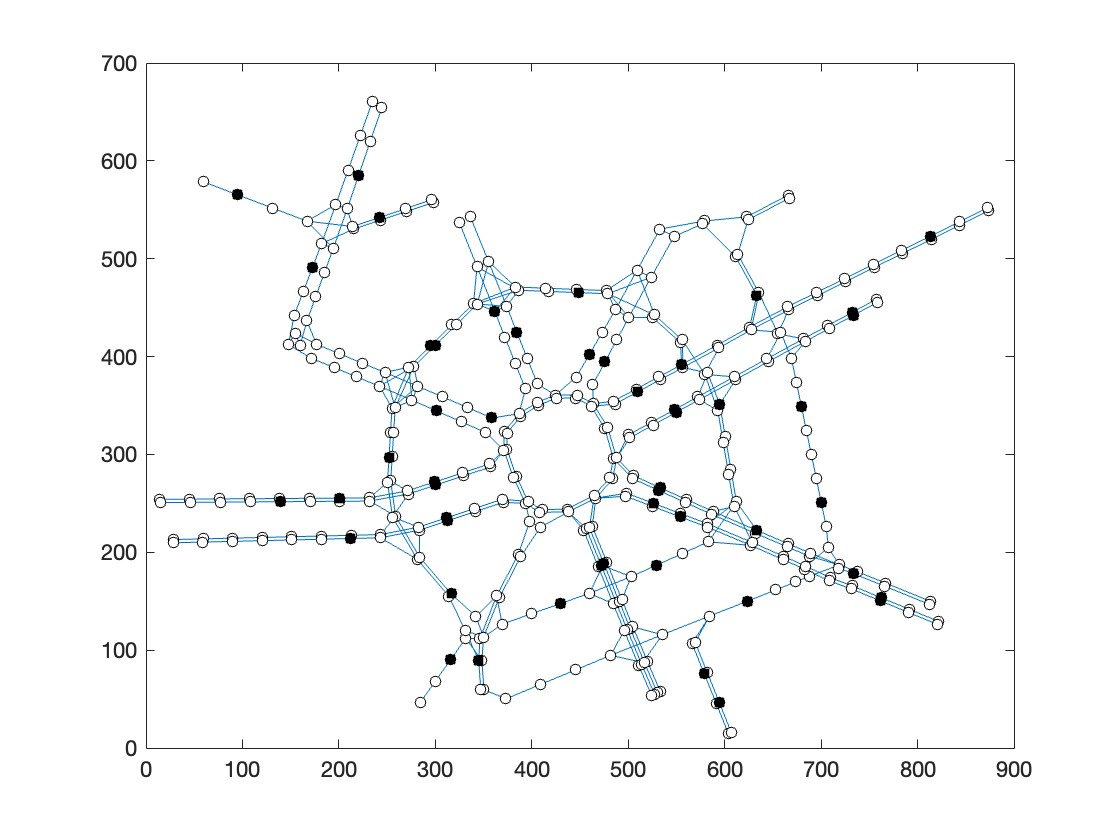}
			\parbox{3cm}{\tiny (g) Roman vehicular traffic data samples from GFRFT sampling.}
		\end{minipage}
		\begin{minipage}[t]{0.23\linewidth}
			\centering
			\includegraphics[width=\linewidth]{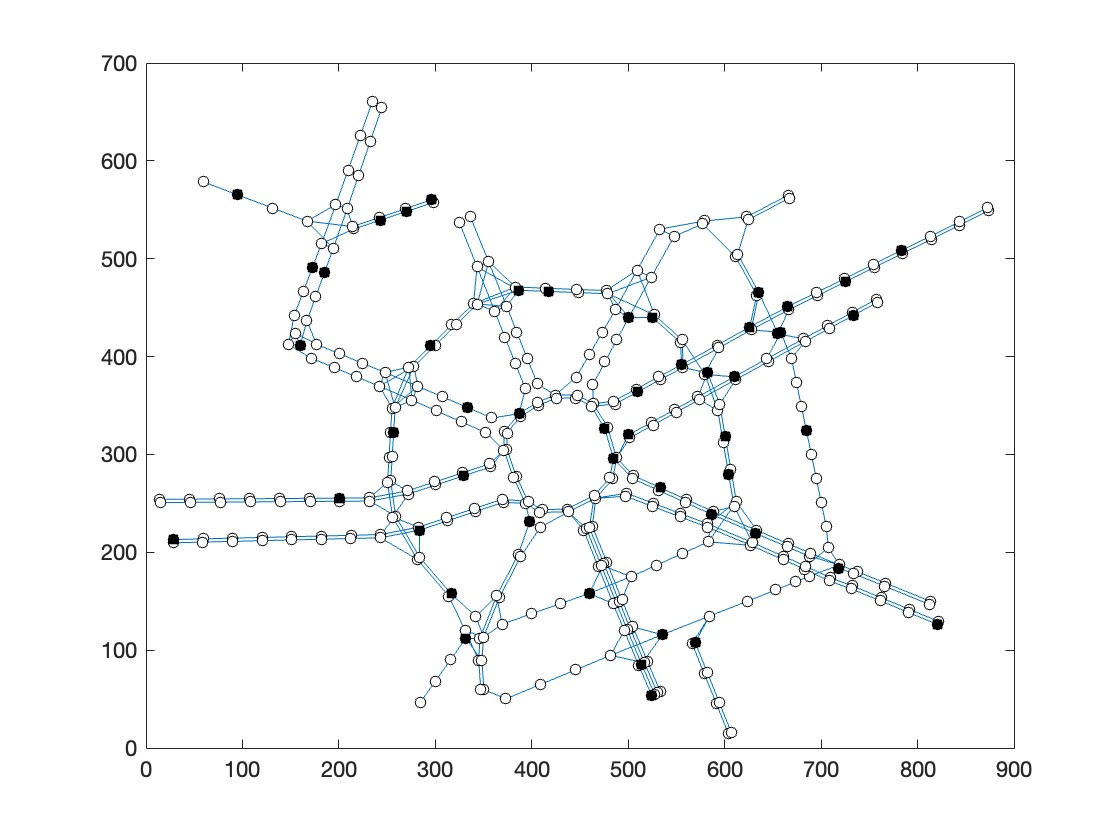} 
			\parbox{3cm}{\tiny (h) Roman vehicular traffic data samples from GLCT sampling.}
		\end{minipage}
	\end{center}
	\caption{The location of the sampled signals.}
	\vspace*{-3pt}
	\label{fig7}
\end{figure}
%recovery graph signals
\begin{figure}[!h]
	\begin{center}
		\begin{minipage}[t]{0.23\linewidth}
			\centering
			\includegraphics[width=\linewidth]{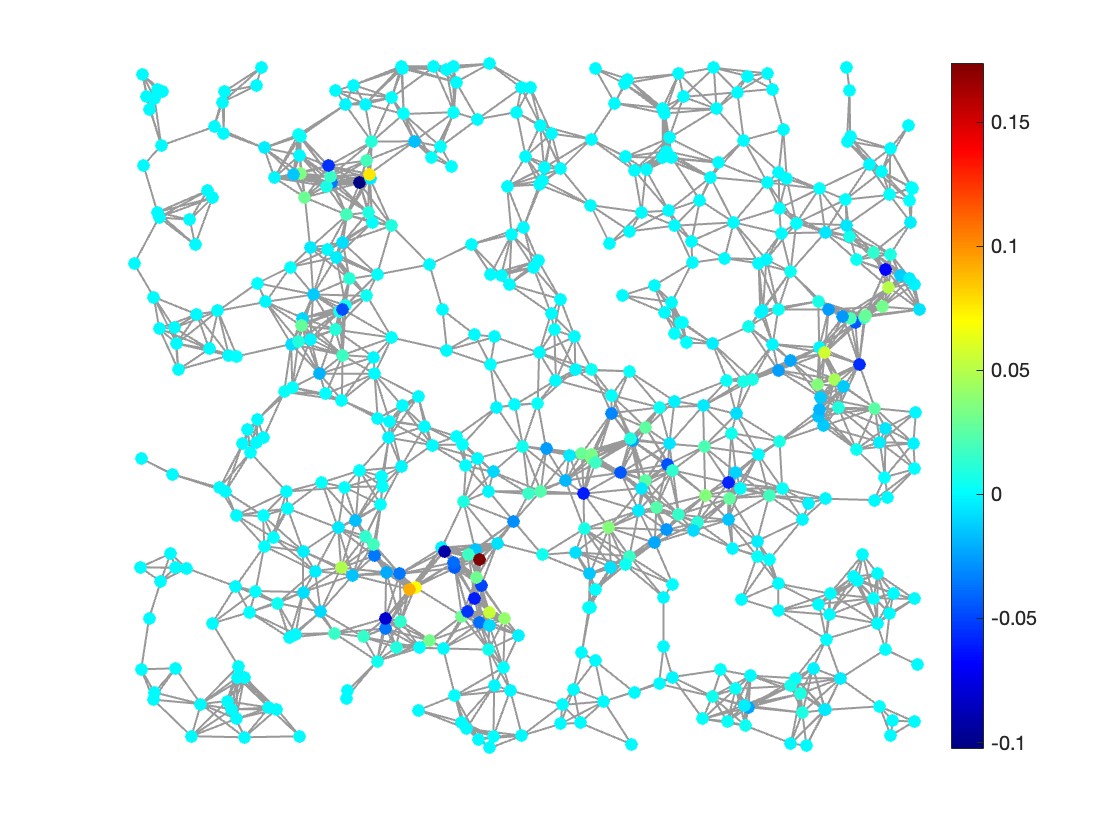}
			\parbox{3.2cm}{\tiny (a) David sensor network recovery from Laplacian-based GFT sampling.}
		\end{minipage}
		\begin{minipage}[t]{0.23\linewidth}
			\centering
			\includegraphics[width=\linewidth]{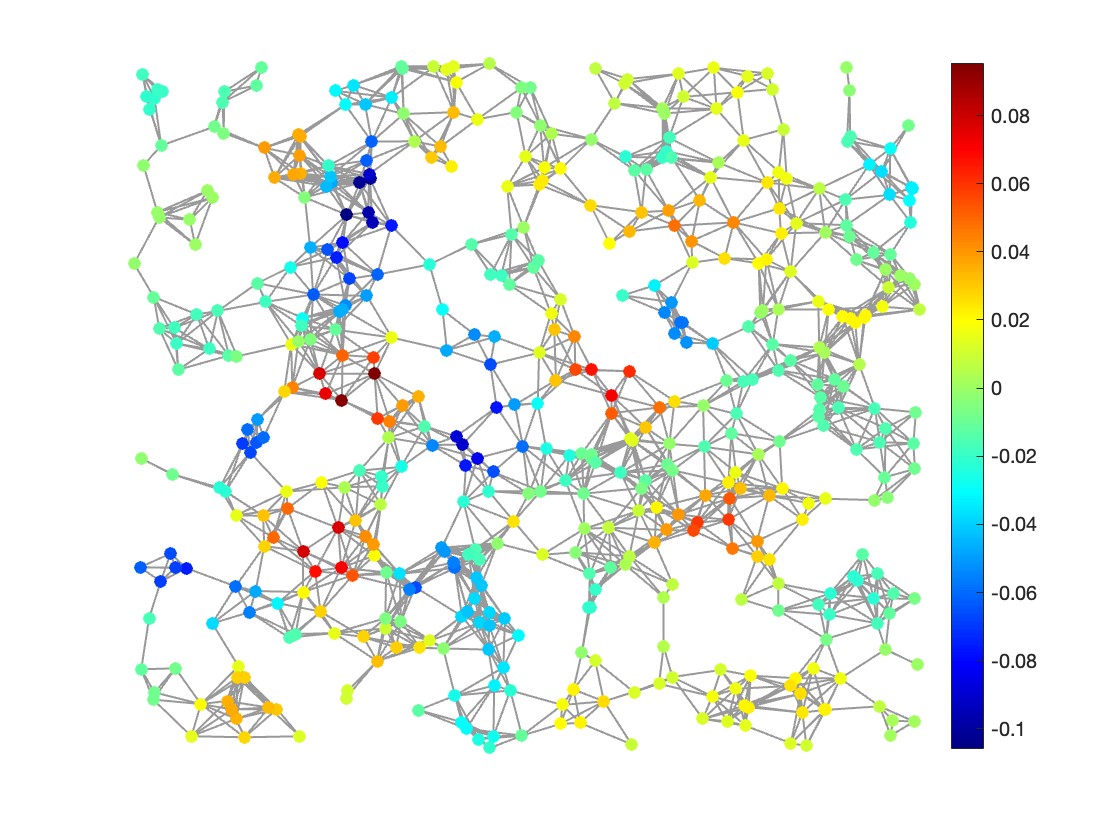}
			\parbox{3cm}{\tiny (b) David sensor network recovery from GFT sampling.}
		\end{minipage}
		\begin{minipage}[t]{0.23\linewidth}
			\centering
			\includegraphics[width=\linewidth]{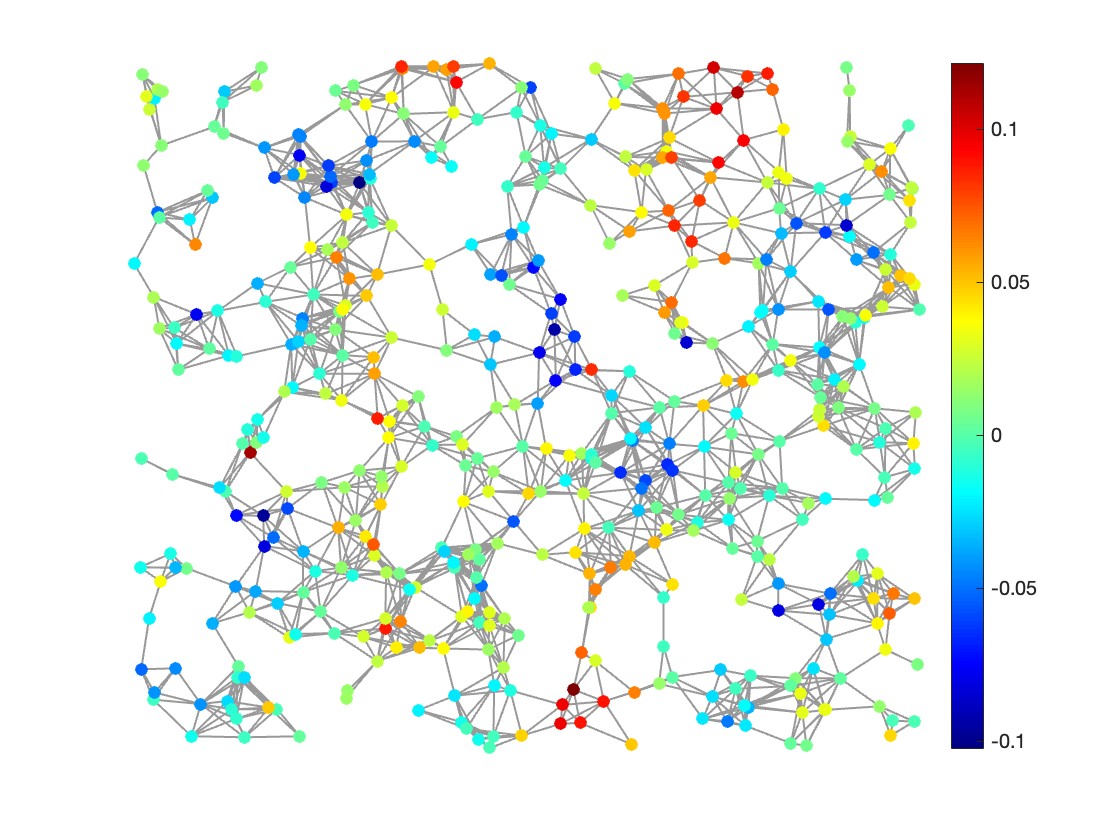}
			\parbox{3cm}{\tiny (c) David sensor network recovery from GFRFT sampling.}
		\end{minipage}
		\begin{minipage}[t]{0.23\linewidth}
			\centering
			\includegraphics[width=\linewidth]{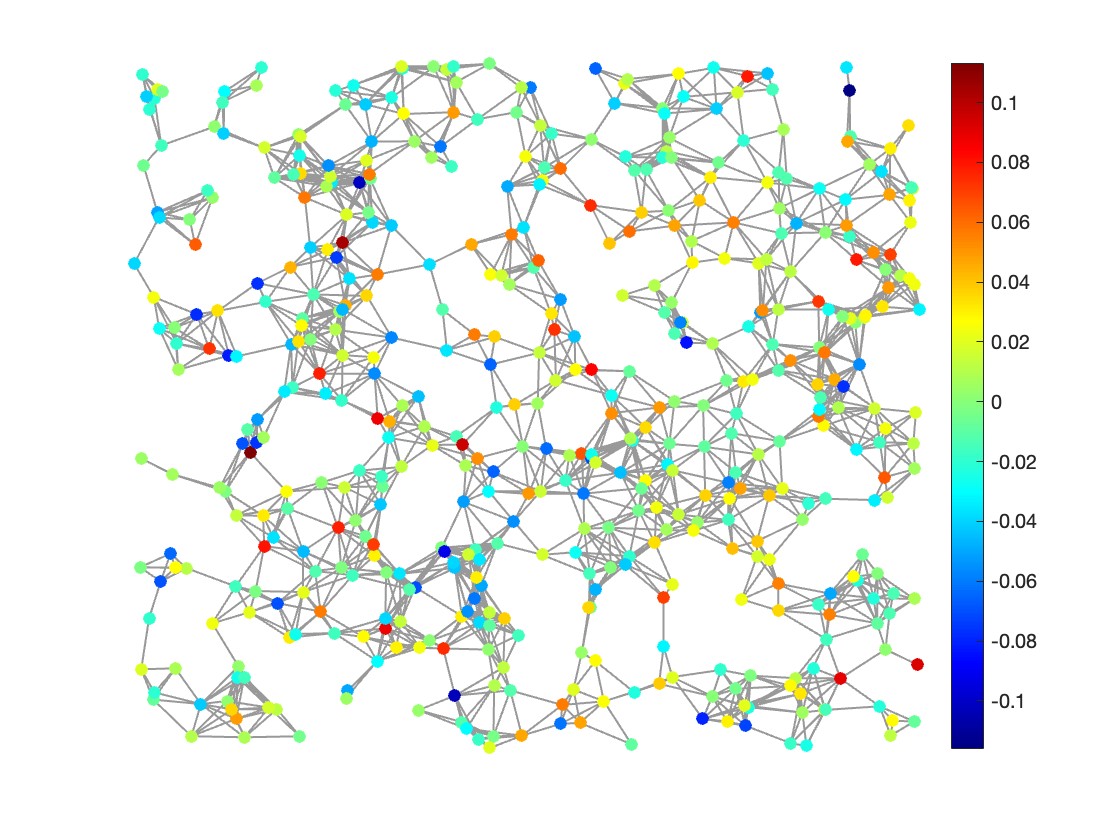} 
			\parbox{3cm}{\tiny (d) David sensor network recovery from GLCT sampling.}
		\end{minipage}
		\begin{minipage}[t]{0.23\linewidth}
			\centering
			\includegraphics[width=\linewidth]{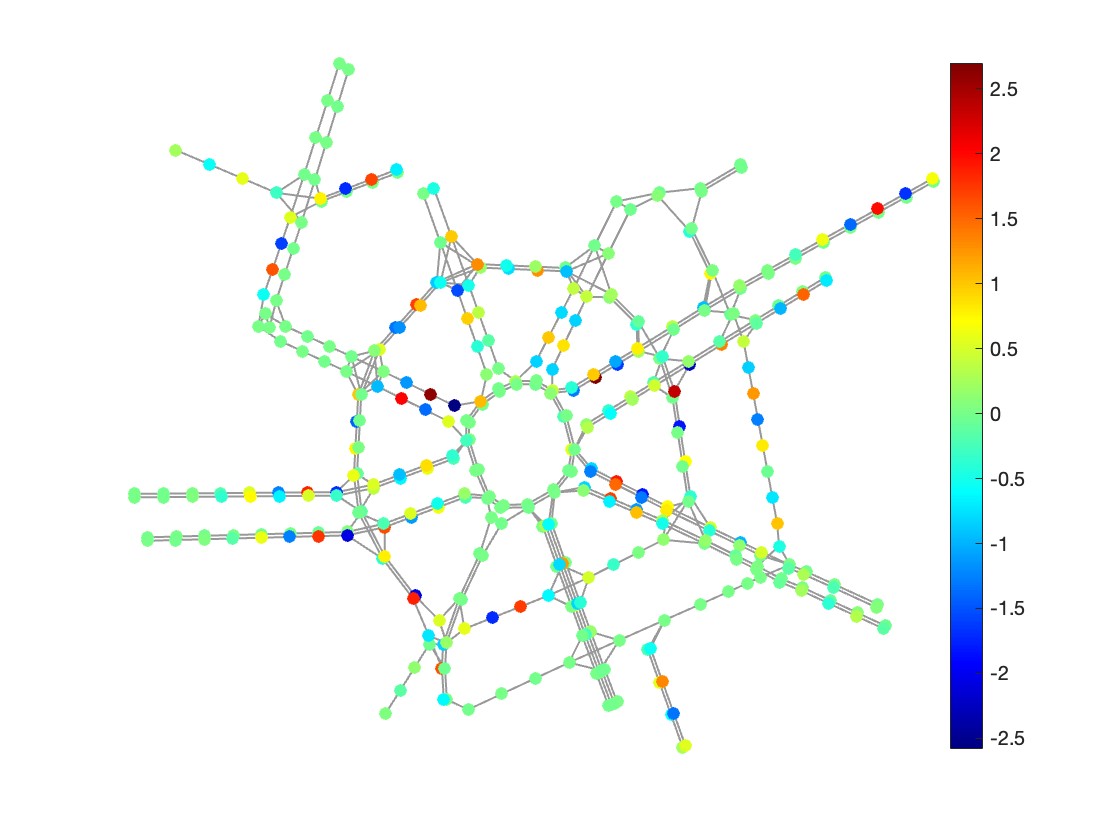}
			\parbox{3.41cm}{\tiny (e) Roman vehicular traffic data recovery from Laplacian-based GFT sampling.}
		\end{minipage}
		\begin{minipage}[t]{0.23\linewidth}
			\centering
			\includegraphics[width=\linewidth]{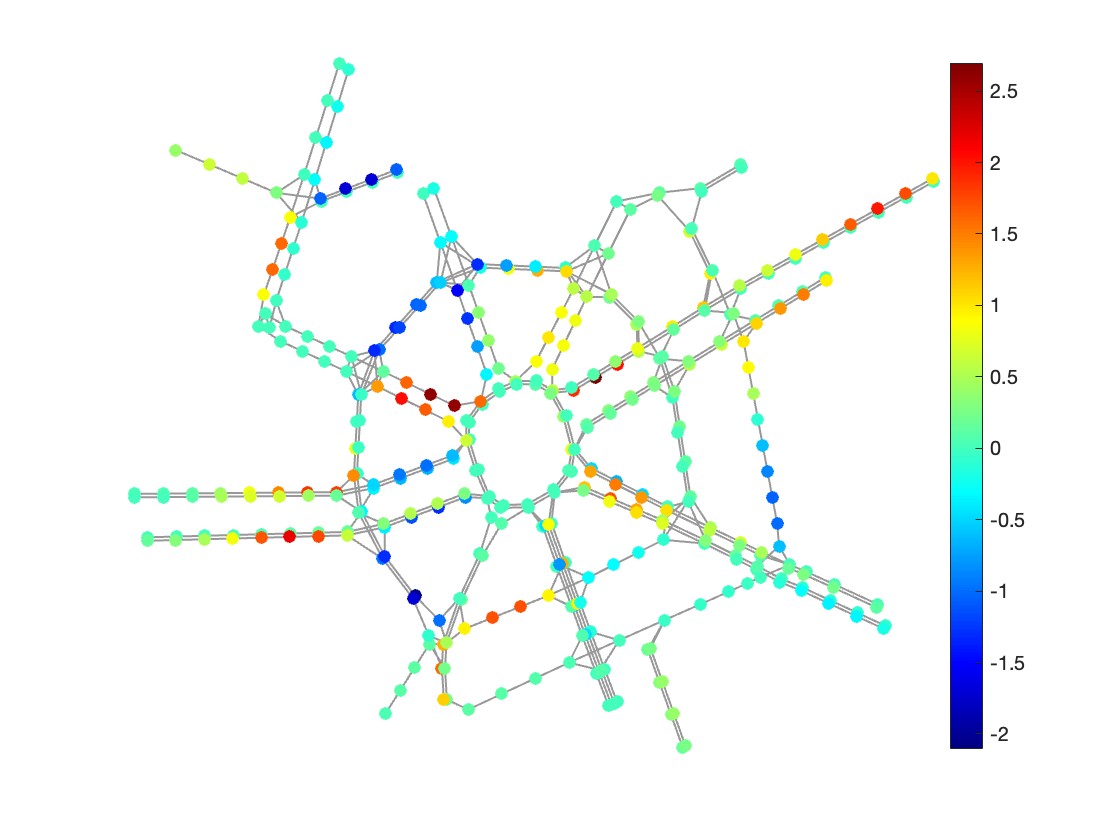}
			\parbox{2.65cm}{\tiny (f) Roman vehicular traffic data recovery from GFT sampling.}
		\end{minipage}
		\begin{minipage}[t]{0.23\linewidth}
			\centering
			\includegraphics[width=\linewidth]{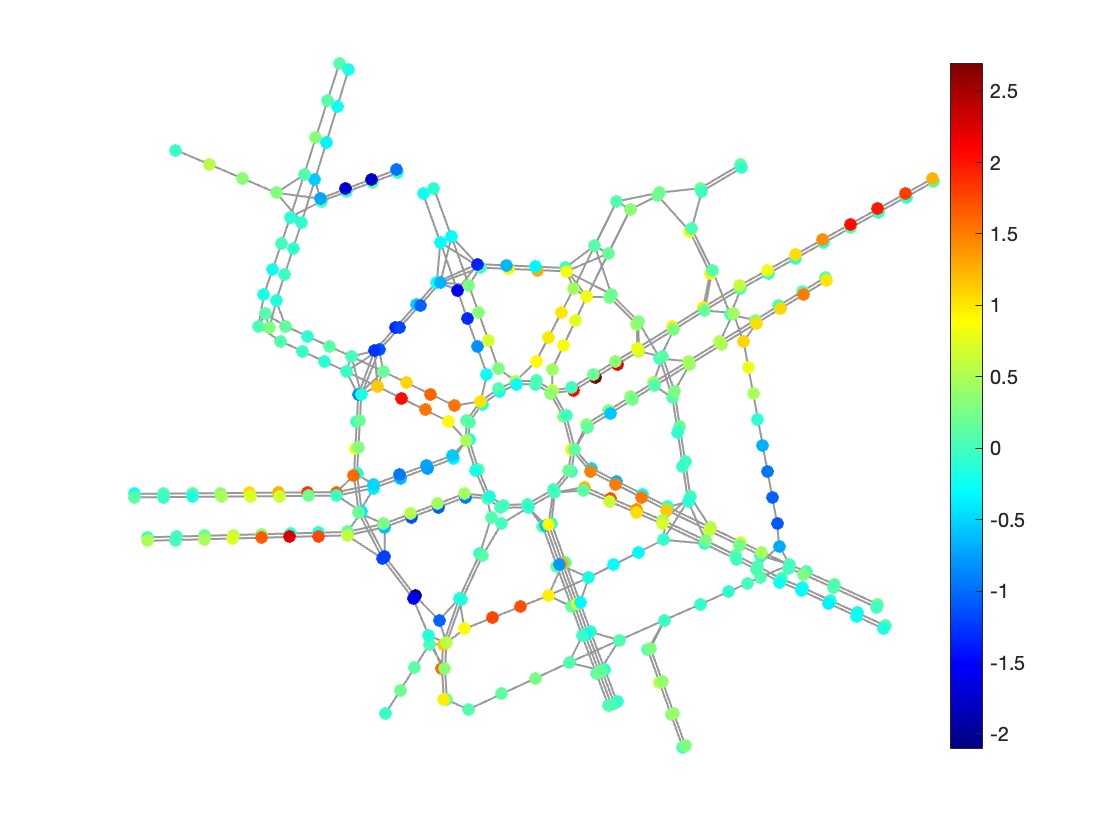}
			\parbox{3cm}{\tiny (g) Roman vehicular traffic data recovery from GFRFT sampling.}
		\end{minipage}
		\begin{minipage}[t]{0.23\linewidth}
			\centering
			\includegraphics[width=\linewidth]{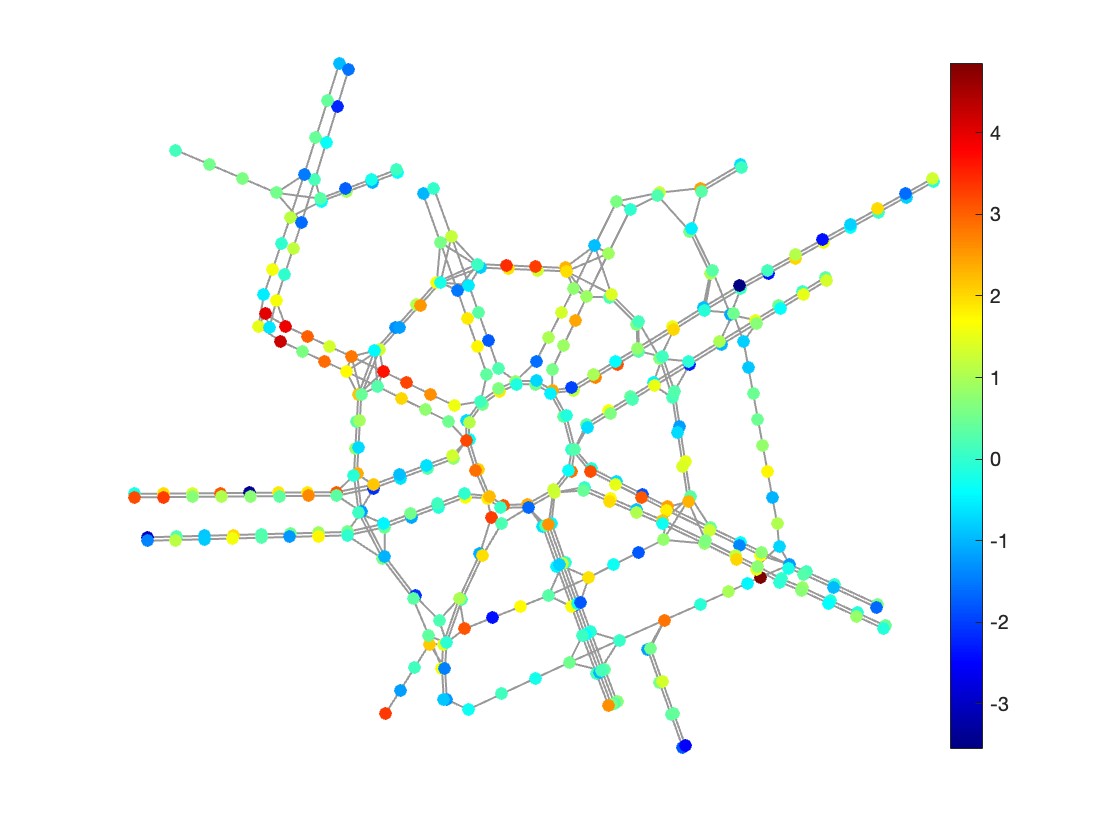} 
			\parbox{3cm}{\tiny (h) Roman vehicular traffic data recovery from GLCT sampling.}
		\end{minipage}
	\end{center}
	\caption{Recovered graph signals.}
	\vspace*{-3pt}
	\label{fig8}
\end{figure}

Since both A-optimal and T-optimal designs target optimizing the trace of an operator, MinFro and MaxSig yield identical results. Although MaxVertex is also the A-optimal design, it uses different optimal objective function and has different results. Meanwhile, MaxSpec and MaxVol essentially reflect D-optimal design, yielding similar results. Likewise, MinPinv and MaxSigMin essentially embodying E-optimal design, produce similar outcomes. It is evident that Laplacian-based GFT, the GFT, and the GFRFT sampling fail to perfectly recover the graph signals, as these signals, while $\mathbf{M}$-bandlimited, are not strictly bandlimited in the Laplacian-based GFT, the GFT and the GFRFT domains. Only the GLCT sampling achieves perfect signal recovery.
\begin{table}[h]
	\centering
	\caption{NMSE vs. different sampling frames for David sensor network based on sampling strategies corresponding to optimal designs}\label{tab2}
		\footnotesize
	\begin{tabular}{ccccc}
		\toprule
		Sampling methods & Laplacian-based GFT &  GFT  &GFRFT & GLCT\\
		\midrule
		MinFro& $5.5784\times 10^{12}$ & $1.6643\times 10^{15}$  &$1.7736$&$3.4424\times 10^{-21}$\\
		MaxVertex& $3.9576\times 10^{12}$ & $9.1507\times 10^{9}$  &$471.0998$&$8.0523\times 10^{-19}$\\
		MaxSpec& $6.7287\times 10^{12}$ & $5.9499\times 10^{12}$  &$1.7736$&$1.0796\times 10^{-19}$\\
		MaxVol& $6.7287\times 10^{12}$ & $1.6188\times 10^{7}$  &$5.6375$&$1.2084\times 10^{-22}$\\
	    MinPinv& $1.0260$ & $1.5349$  &$1.7736$&$2.9277\times 10^{-24}$\\
	    MaxSigMin& $1.0024$& $1.4591$ &$1.8291$&$3.0375\times 10^{-24}$\\
	    MaxSig& $5.5784\times 10^{12}$ & $1.6643\times 10^{15}$  &$1.7736$&$3.4424\times 10^{-21}$\\
		\bottomrule
	\end{tabular}
\end{table}

\begin{table}[h]
	\centering
	\caption{NMSE vs. different sampling frames for Roman vehicular traffic data based on sampling strategies corresponding to optimal designs}\label{tab3}
		\footnotesize
	\begin{tabular}{ccccc}
		\toprule
		Sampling methods & Laplacian-based GFT &  GFT  &GFRFT & GLCT\\
		\midrule
		MinFro& $1.9346\times 10^{12}$& $2.1664\times 10^{11}$  &$5.1485\times 10^{3}$&$1.8502\times 10^{-23}$\\
		MaxVertex& $5.1846\times 10^{17}$& $2.2107\times 10^{13}$  &$1.1505\times 10^{5}$&$1.3938\times 10^{-22}$\\
		MaxSpec& $6.5921\times 10^{11}$& $9.8408\times 10^{12}$  &$19.5859$&$3.7200\times 10^{-19}$\\
		MaxVol& $6.5921\times 10^{11}$  &$9.8408\times 10^{12}$   &$20.3122$&$1.7204\times 10^{-23}$\\		
		MinPinv& $1.0305$ & $1.0860$  &$1.1172$&$5.2989\times 10^{-25}$\\
		MaxSigMin& $1.0281$& $1.0902$ &$1.1216$&$1.8111\times 10^{-24}$\\
		MaxSig& $1.9346\times 10^{12}$& $2.1664\times 10^{11}$  &$5.1485\times 10^{3}$&$1.8502\times 10^{-23}$\\
       \bottomrule
	\end{tabular}
\end{table}

\subsection{Classifying Online Blogs}
In this section, we classify a real-world political blog dataset comprising $N=1224$ nodes, categorizing them into conservative or liberal factions \cite{PolBlog}. Nodes within the graph denote blogs, whereas edges symbolize hyperlinks interlinking the blogs. Label signals are defined such that 0 represents conservative and 1 denotes liberal, with these label signals spanning the entire frequency band. By employing sampling to reduce the number of vertices, we facilitate easier storage and transmission. Subsequently, through signal recovery, we reconstruct the complete set of blogs and evaluate the performance of our sampling method based on classification accuracy. In this case, we employ the MaxSigMin sampling strategy and adjust the parameters of GLCT to determine the optimal performance and, as noted in \cite{GFRFTsamp}, we know that the best results are achieved when the parameter $\alpha$ approaches 1, albeit not precisely equal to 1. Therefore, while adjusting other parameters, we approximately recover the labeled signal using the lowest $|\mathcal{F}|$ frequency components by solving the following optimization problem
\begin{equation}
	\mathbf{\hat{x}^{\mathrm{opt}}_{|\mathcal{F}|}} = \arg\min_{\mathbf{\hat{x}}_{|\mathcal{F}|}\in\mathbb{R}^{|\mathcal{F}|}}~\left\Vert \mathrm{thres}\left(\mathbf{DO^{-M}_{|\mathcal{F}|}\hat{x}_{|\mathcal{F}|}}\right) - \mathbf{x}_\mathcal{S}\right\Vert^2_2,
\end{equation}
where, the threshold function $\mathrm{thres}(\cdot)$ assigns a value of 1 to all values greater than 0.5 and 0 otherwise. Subsequently, we set another threshold of 0.5 \cite{GFTsamp} to assign labels to the recovered signal
\begin{equation}
	\mathbf{x}^{\mathrm{opt}}_{\mathcal{R}} = \mathrm{thres}\left(\mathbf{O^{-M}_{|\mathcal{F}|}}\mathbf{\hat{x}^{\mathrm{opt}}_{|\mathcal{F}|}}\right).
\end{equation}

Since the GFT and the GFRFT are special cases of the GLCT, by adjusting the parameters of the GLCT, all methods can be encompassed. In our previous work \cite{GLCT}, the parameter $\xi$ only influenced the real and imaginary components of the spectrum, while the absolute value of the overall signal remained unchanged. Thus, we set $\xi=0.5k+1, k=1,...,N$ in this study. We varied the parameter $\alpha$ from $0.99$ to $1$. For the parameter $\beta=m \times 2^{n}, n \in \mathbb{R}$, as the accuracy remains constant when $m$ is fixed, we considered variations in $m$, setting $m=1,3,5,N$. Fig. \ref{fig9} illustrates the spectral representations with different bandwidths $|\mathcal{F}|$ of the GLCT with parameters $\xi=0.5k+1,\beta=2, \alpha=0.995$: (a) shows $|\mathcal{F}|=2$, (b) $|\mathcal{F}|=612$, and (c) $|\mathcal{F}|=1224$. Fig. \ref{fig10}(a) illustrates the change in classification accuracy for nodes with a frequency bandwidth $|\mathcal{F}|=2$ and a sampling size $|\mathcal{S}|=\mathrm{M}=5$ as we vary the parameter $\alpha$. The accuracy reaches its maximum value of 94.6078\% when $\alpha=0.995$ and $\beta=2^{n}$. In Fig. \ref{fig10}(b), we present a comparison of classification accuracies with increasing sampling size $\mathrm{M}$ for GFT, GFRFT \cite{GFRFTsamp}, and GLCT sampling. For the GFRFT and GLCT methods, the parameters selected correspond to the optimal values identified in Fig. \ref{fig10}(a). It is evident that GLCT outperforms GFRFT and GFT across varying sampling sizes $\mathrm{M}$.
% figure 9
\begin{figure}[h]
	\begin{center}
		\begin{minipage}[t]{0.31\linewidth}
			\centering
			\includegraphics[width=\linewidth]{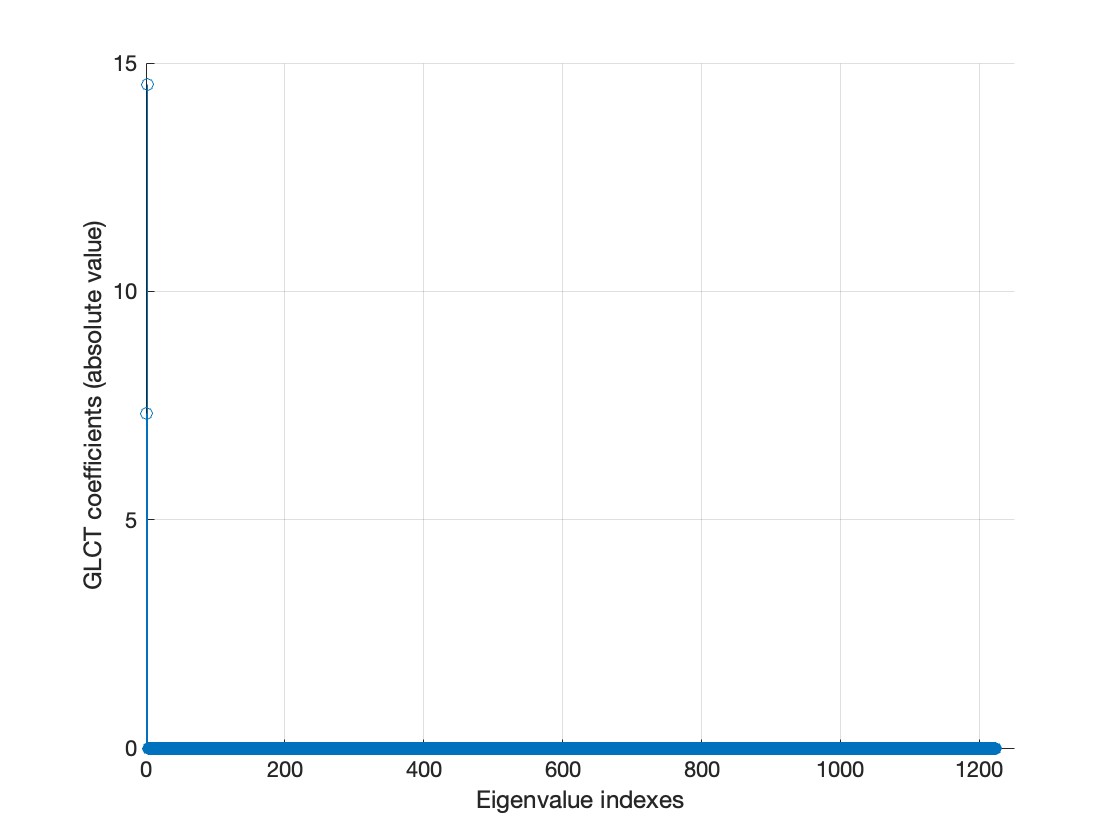}
			\parbox{4cm}{\tiny(a) The spectrum of the GLCT with parameters $\xi=0.5k+1, \beta=2, \alpha=0.995$, and $|\mathcal{F}|=2$.}
		\end{minipage}
		\begin{minipage}[t]{0.31\linewidth}
			\centering
			\includegraphics[width=\linewidth]{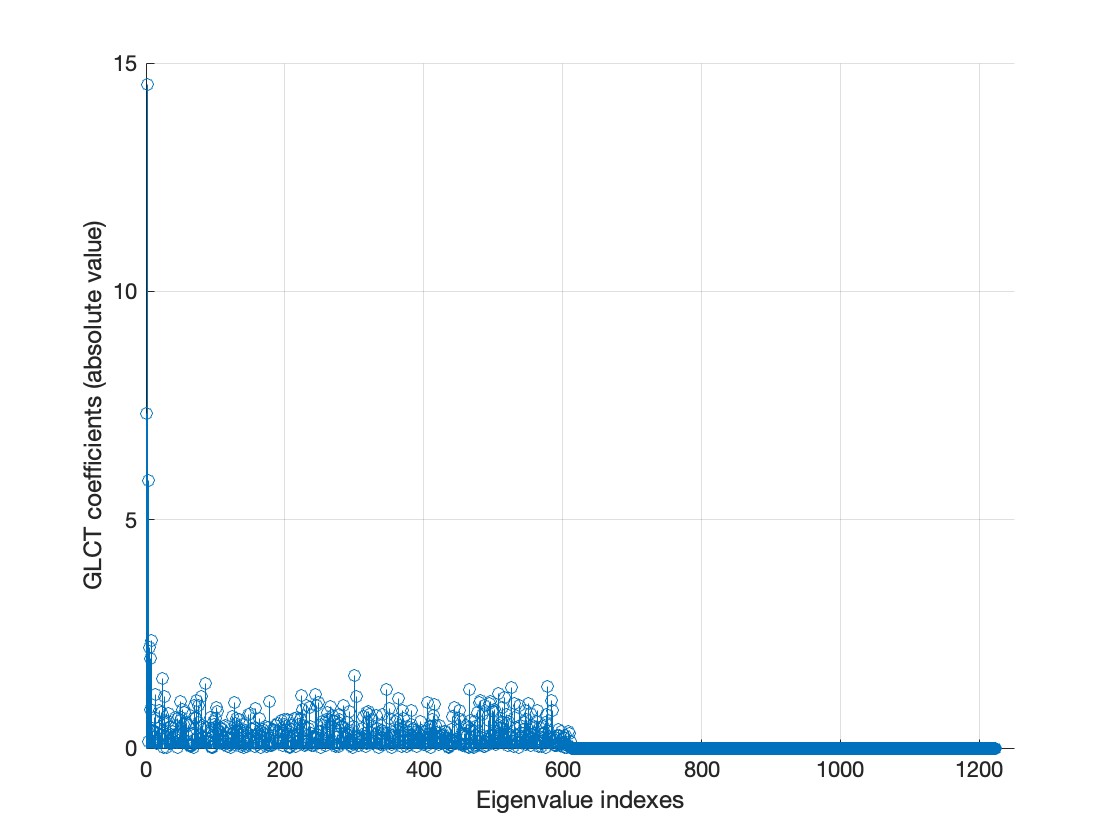}
			\parbox{4cm}{\tiny (b) The spectrum of the GLCT with parameters $\xi=0.5k+1, \beta=2, \alpha=0.995$, and $|\mathcal{F}|=612$.}
		\end{minipage}
		\begin{minipage}[t]{0.31\linewidth}
			\centering
			\includegraphics[width=\linewidth]{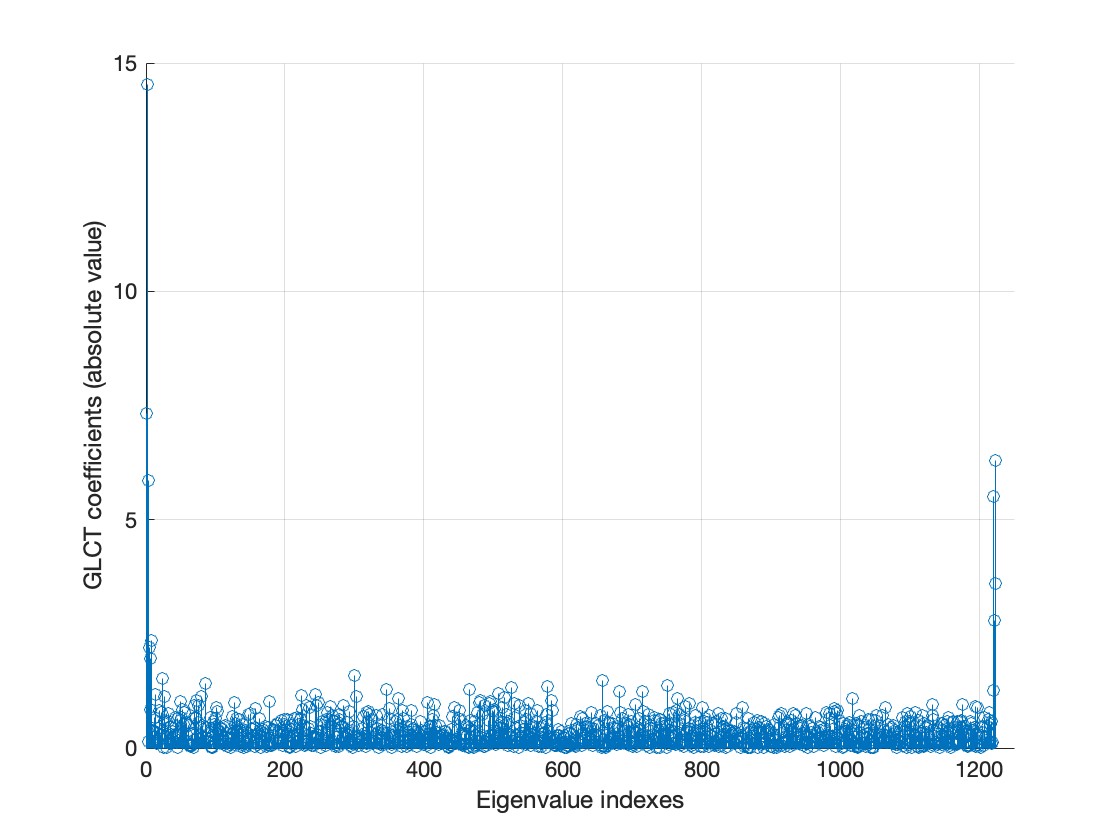}
			\parbox{4cm}{\tiny (c) The spectrum of the GLCT with parameters $\xi=0.5k+1, \beta=2, \alpha=0.995$, and $|\mathcal{F}|=1224$.}
		\end{minipage}
	\end{center}
	\caption{Bandlimited spectrum of online political blogs.}
	\vspace*{-3pt}
	\label{fig9}
\end{figure}
% figure 10
\begin{figure}[h]
	\begin{center}
		\begin{minipage}[t]{0.51\linewidth}
			\centering
			\includegraphics[scale=0.22]{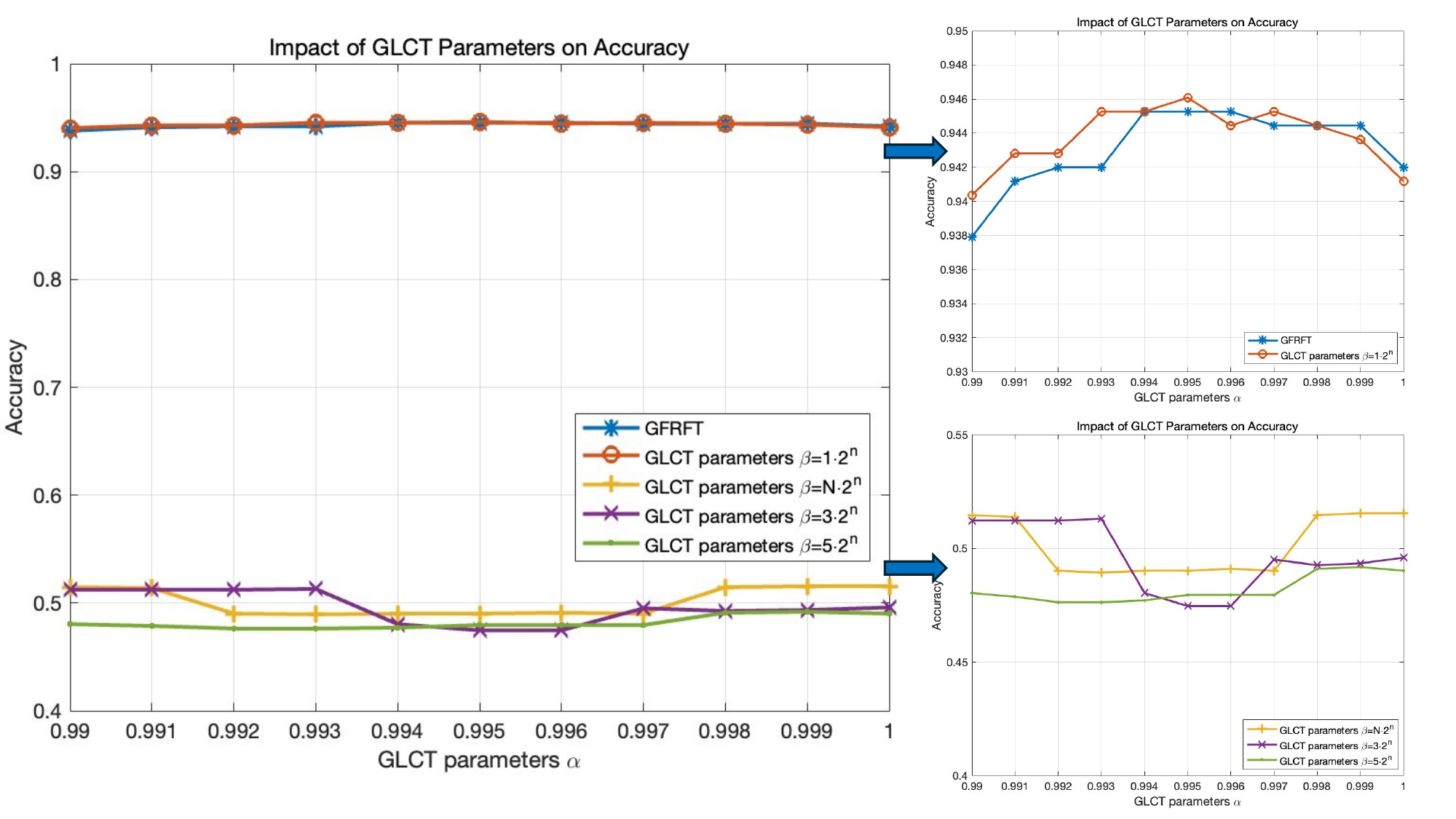}
			\parbox{6cm}{\tiny(a) Classification accuracy as a function of fractional order $\alpha$. The top-right graph illustrates the detailed classification accuracy of GFRFT and GLCT with $\beta=2^n$. The bottom-right graph represents the detailed classification accuracy with $\beta=3\cdot 2^n, 5\cdot 2^n, N\cdot 2^n$.}
		\end{minipage}
		\begin{minipage}[t]{0.45\linewidth}
			\centering
			\includegraphics[scale=0.15]{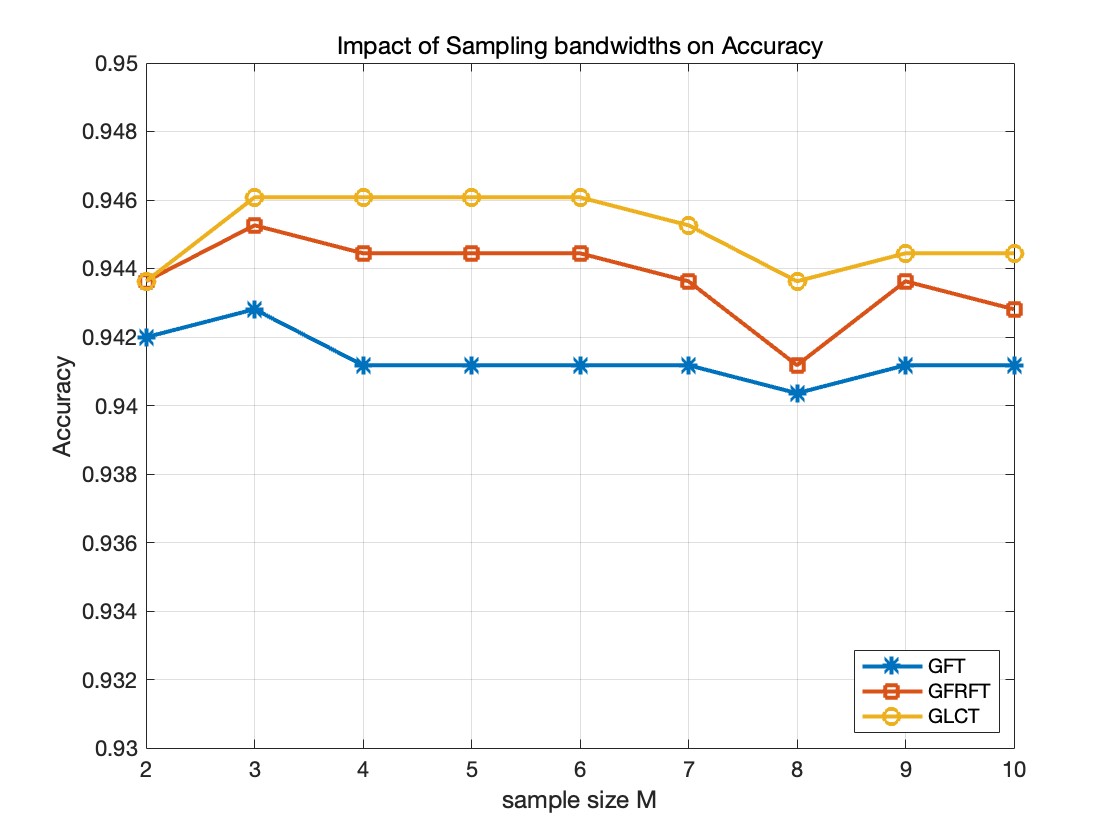}
			\parbox{5.5cm}{\tiny (b) Classification accuracy as a function of sample size $\mathrm{M}$ for GFT, GFRFT, and GLCT sampling. the blue line denotes GFT, the red line denotes the GFRFT of $\alpha=0.996$ \cite{GFRFTsamp}, and the yellow line denotes the GLCT of $\xi=0.5k+1,\beta=2^{n}, \alpha=0.995$.}
		\end{minipage}
	\end{center}
	\caption{Classification of online political blogs.}
	\vspace*{-3pt}
	\label{fig10}
\end{figure}

\subsection{Clustering of Bus Test Case}
The second part of our study focuses on sampling within an electrical grid, specifically utilizing the IEEE 118 bus test case, which represents a section of the Midwestern US power system as of December 1962. This network is composed of 118 vertices (i.e., buses) interconnected by edges (i.e., transmission lines) as depicted in Fig. \ref{fig11}(a) \cite{IEEEbusgraph}. The coloring of nodes is indicative of the components of eigenvectors, which are aligned with the eigenvalues of the GLCT operator matrix in an ascending order. These eigenvalues are derived under the GLCT parameters configured as $\xi=0, \beta = 2, \alpha=1$. The graph signal is represented by $\mathbf{x}=\mathbf{O}^{-\mathbf{M}}_N$, where the notation signifies the $N$th column of the matrix. As suggested in \cite{IEEE118}, the dynamics of generators yield smooth graph signals, making the assumption of $\mathbf{M}$-bandlimitedness plausible, albeit in an approximate sense, as demonstrated by the spectral plot of a bandlimited graph signal in Fig. \ref{fig11}(b). Network parameters are available at \url{http://www.cise.ufl.edu/research/sparse/matrices/}, with their layout derived through a graph drawing algorithm described in \cite{IEEEnetworks}.
\begin{figure}[h]%figure11
	\begin{center}
		\begin{minipage}[t]{0.45\linewidth}
			\centering
			\includegraphics[width=\linewidth]{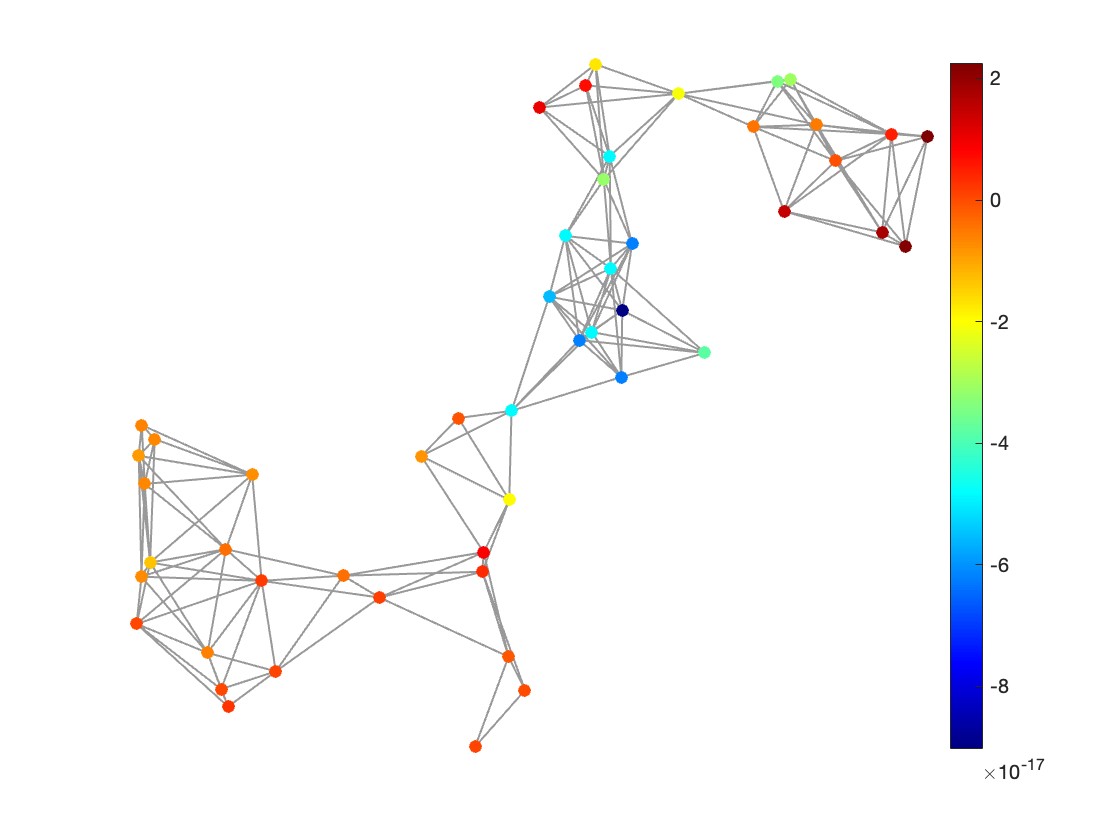}
			\parbox{5.5cm}{\tiny (a) The graph signal characterized by vertex signals $\mathbf{x}=\mathbf{O}^{-\mathbf{M}}_N$, with the parameters set as $\xi=0, \beta=2, \alpha=1$.}
		\end{minipage}
		\begin{minipage}[t]{0.45\linewidth}
			\centering
			\includegraphics[width=\linewidth]{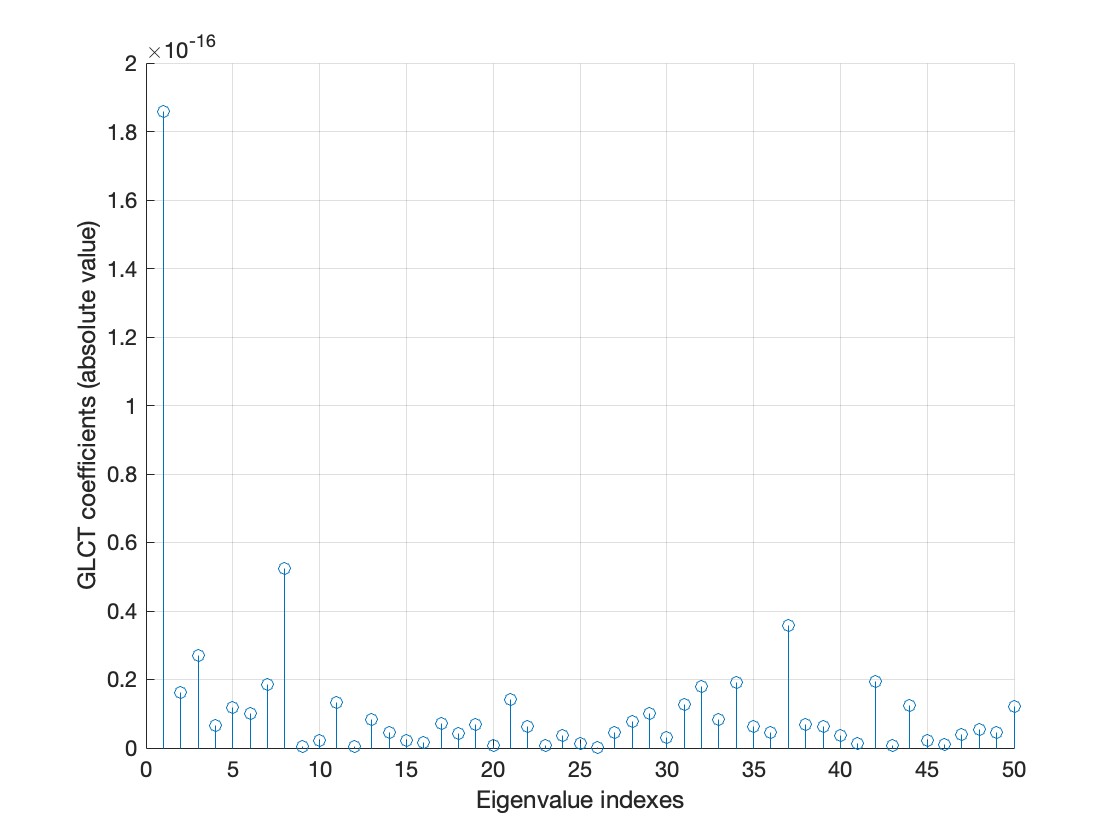}
			\parbox{5.5cm}{\tiny (b) The spectral representation of the graph signal $\mathbf{x}$ after $\mathbf{M}$-bandlimited.}
		\end{minipage}
	\end{center}
	\caption{Graph signal representation of IEEE 118 bus test cases.}
	\vspace*{-3pt}
	\label{fig11}
\end{figure}

In our demonstration, we generate random low-pass signals with a graph spectral bandwidth $|\mathcal{F}| = 8$ and collect a sample size $|\mathcal{S}| = 8$. We then count the number of triangles in the graph to select optimal sampling parameters using different GLCT settings, employing the MaxSigMin sampling strategy, and perform $K$-means clustering \cite{Cluster}, choosing $K=8$, with results illustrated in Fig. \ref{fig12}. And the signal recovery formula is Eq. \eqref{recovery}. In the context of the IEEE 118 bus test case, the pivotal utility of cluster analysis lies in discerning the functional zones and critical nodes within the power system. This discernment is instrumental in guiding the optimization of grid scheduling and bolstering the system's stability and efficiency. Leveraging the characteristics of these categorized groups enables the refinement of power generation scheduling strategies, aimed at minimizing the overall generation costs while ensuring the system's stable operation. Moreover, cluster analysis serves a crucial role in identifying nodes that may significantly impact the system's stability. This identification forms a foundational basis for the development of fault prevention measures and the formulation of emergency response plans \cite{Clustering1,Clustering2,Cluster}.
\begin{figure}[h]%figure12
	\begin{center}
		\begin{minipage}[t]{0.6\linewidth}
			\centering
			\includegraphics[width=\linewidth]{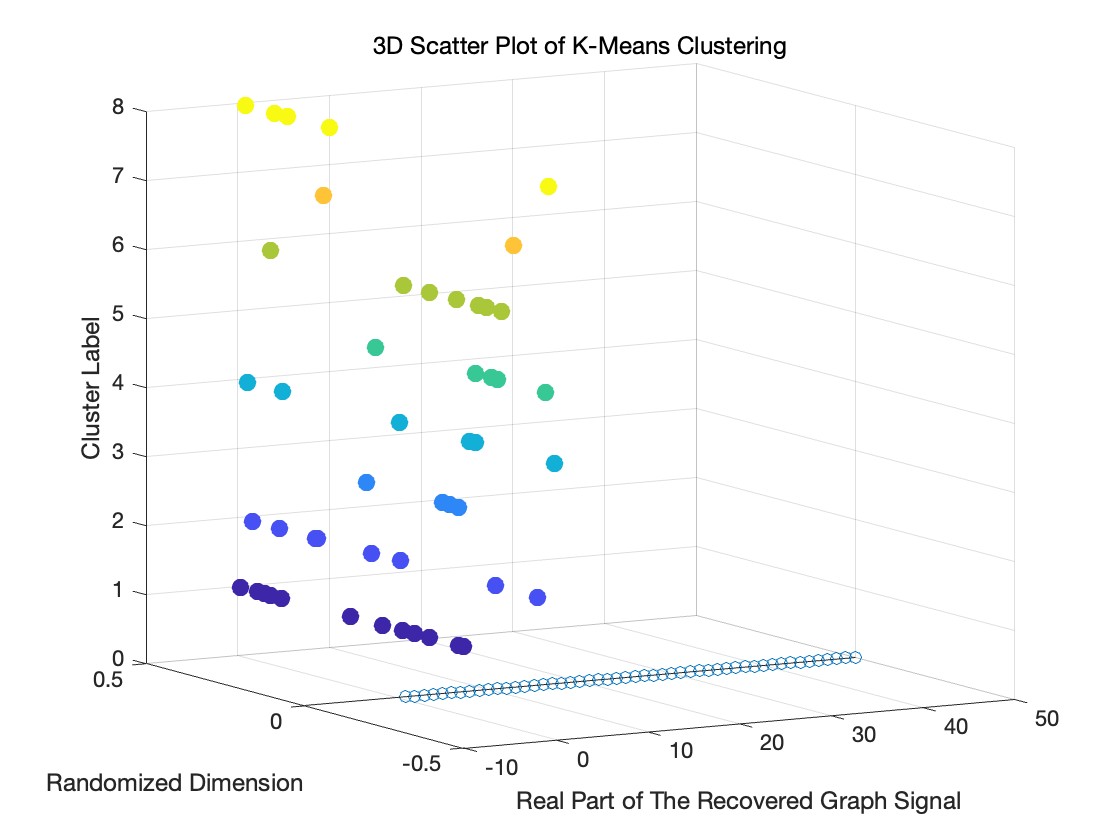}
			\parbox{5.5cm}{\tiny(a) $K$-means clustering of IEEE 118 bus test cases.}
		\end{minipage}
%		\begin{minipage}[t]{0.45\linewidth}
%			\centering
%			\includegraphics[width=\linewidth]{IEEE_eigenvaule.jpg}
%			\parbox{5.5cm}{\tiny (b) }
%		\end{minipage}
	\end{center}
	\caption{$K$-means clustering of IEEE 118 bus test cases.}
	\vspace*{-3pt}
	\label{fig12}
\end{figure}

Finally, we evaluate the clustering results using
\[ \text{Silhouette Score} = \frac{b(i) - a(i)}{\max\{a(i), b(i)\}}, \]
where, $a(i)$ represents the average distance of sample $i$ to other samples within the same cluster (intra-cluster compactness), and $b(i)$ represents the average distance of sample $i$ to the nearest samples in other clusters (inter-cluster separation). A result closer to 1 indicates that samples are correctly assigned to clusters, with good separation between clusters.
% figure 13
\begin{figure}[h]
	\begin{center}
		\begin{minipage}[t]{0.45\linewidth}
			\centering
			\includegraphics[width=\linewidth]{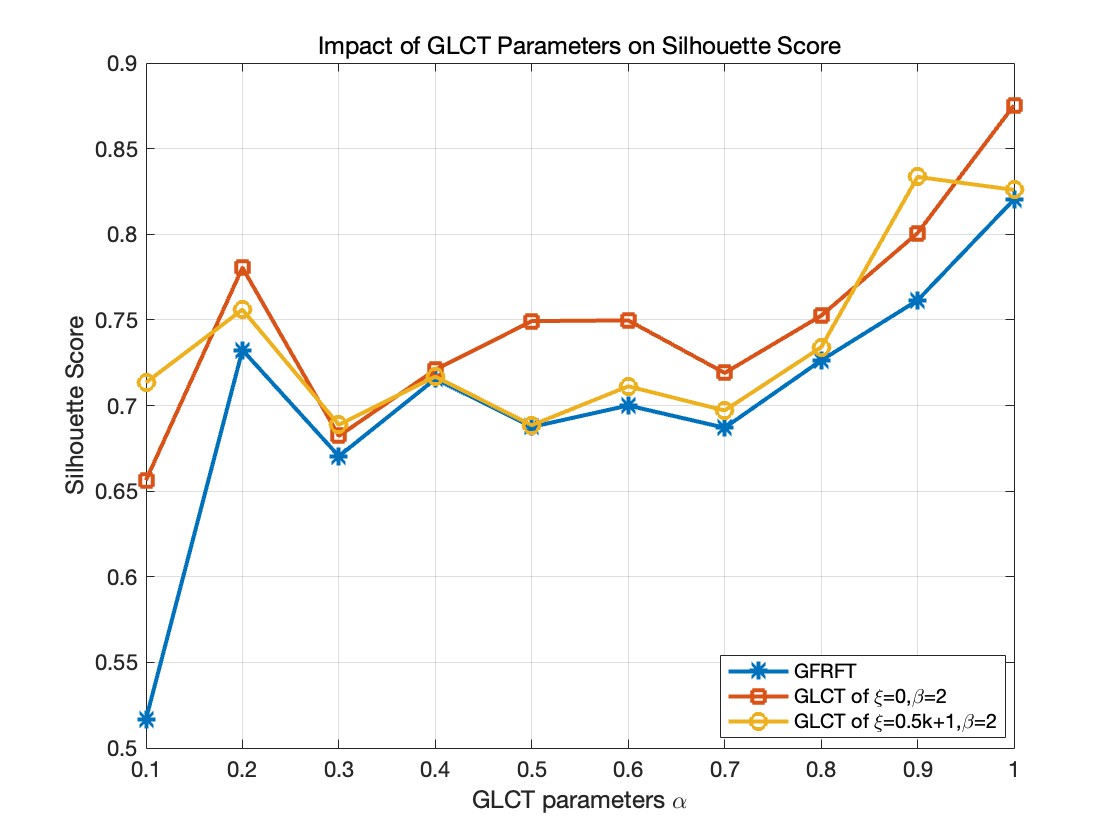}
			\parbox{5.5cm}{\tiny(a) Silhouette score as a function of fractional order $\alpha$, the blue line denotes GFRFT, the red line denotes the GLCT of $\xi=0,\beta=2$, and the yellow line denotes the GLCT of $\xi=0.5k+1,\beta=2$.}
		\end{minipage}
		\begin{minipage}[t]{0.45\linewidth}
			\centering
			\includegraphics[width=\linewidth]{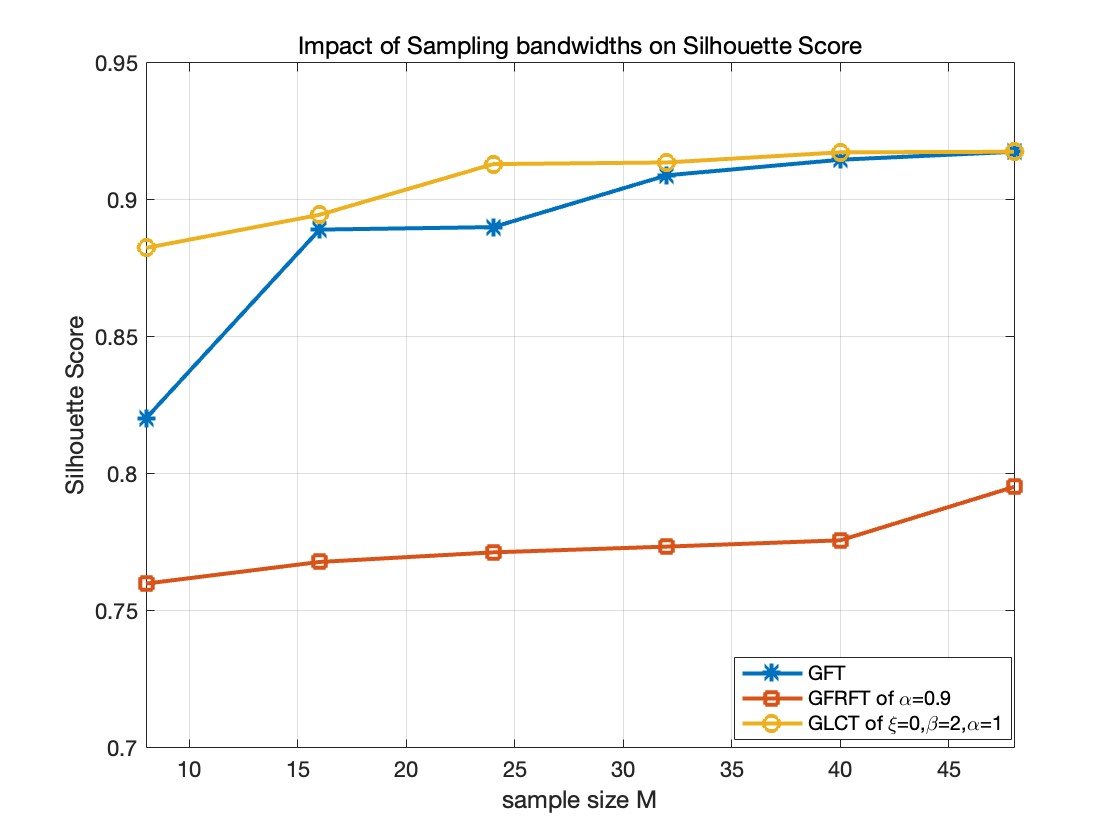}
			\parbox{5.5cm}{\tiny (b) Silhouette score as a function of sample size $\mathrm{M}$, the blue line denotes GFRFT of $\alpha=1$ (GFT), the red line denotes the GFRFT of $\alpha=0.9$, and the yellow line denotes the GLCT of $\xi=0,\beta=2,\alpha=1$.}
		\end{minipage}
	\end{center}
	\caption{Clustering of IEEE 118 Bus Test Case.}
	\vspace*{-3pt}
	\label{fig13}
\end{figure}

We conducted experiments by employing the method of controlling variables, adjusting the GLCT parameters and the number of sampling notes. Fig. \ref{fig13}(a) displays the silhouette score of fixed sampling $\mathrm{M}=|\mathcal{F}|=8$ nodes with varying parameter $\alpha$. The silhouette score reaches its maximum value of $0.8753$ when $\xi=0,\beta=2$ and $\alpha=1$. In (b), we compare the silhouette score of the GFT, GFRFT and GLCT with optimal parameters as the sampling size $\mathrm{M}$ varies. Overall, the GLCT exhibits superior performance.

\section{Conclusion}
\label{6}
In this paper, we propose the uncertainty principle and sampling theory in the GLCT domain. We demonstrate that the uncertainty principle in the GLCT domain has a broader scope compared to the GFT domain and shares interesting connections with sampling theory. We show that $\mathbf{M}$-bandlimited graph signals in the GLCT domain can achieve perfect recovery through the GLCT with $\mathbf{M}$ matrix coefficients. We employ experimentally designed optimal sampling strategies to ensure perfect recovery while maximizing robustness against noise. Various proposed sampling strategies are compared. Subsequently, we conduct simulation experiments and test GLCT sampling and recovery in a semi-supervised classification application and clustering of bus test cases. When comparing its performance with GFT and GFRFT sampling, we observe that GLCT sampling achieves superior classification accuracy under the optimal $\mathbf{M}$ matrix.
% \linenumbers 

\appendix
\section{Proof of Lemma 1}
\label{AA}
The proof for this statement is available in \cite{GUncertainty}. By utilizing Eq. \eqref{Dxx} and Eq. \eqref{Bxx}, we derive
\[
\mathbf{B^{M}DB^{M}x=B^{M}Dx=B^{M}x=x}.
\]
Consequently, $\lambda_{\max}\left(\mathbf{B^{M}DB^{M}} \right) =1$. On the other hand, if $\mathbf{B^{M}DB^{M}x=x}$ is known, then $\mathbf{B^{M}DB^{M}x=B^{M}x}$, as $( \mathbf{B^{M}})^{2}=\mathbf{B^{M}}$. This further implies $\mathbf{B^{M}x=x}$. Regarding vertex localization, utilizing the Rayleigh-Ritz theorem, we obtain
\[
\max_{\mathbf{x}} \mathbf{\frac{x^{\ast }Dx}{x^{\ast }x} }=\max_{\mathbf{x}} \mathbf{\frac{x^{\ast }B^{-M}DB^{M}x}{x^{\ast }x}} =1.
\]
Therefore, $\mathbf{Dx=x}$, indicating perfect localization in both the vertex and spectral domains.

\section{Proof of Lemma 2}
\label{AB}
The proof of this bound requires the use of curves \cite{GshapesUC}
\begin{equation}
	\gamma \left( \zeta\right)  :=\left( \left( \zeta\lambda_{\max } \right)^{\frac{1}{2} }  +\left( \left( 1-\zeta\right)  \left( 1-\lambda_{\max } \right)  \right)^{\frac{1}{2} }  \right)^{2}  ,\  \zeta\in \left[ \lambda_{\max } ,1\right]  .
\end{equation}
For a normalized vector signal $\mathbf{x}$, we consider two normalized vectors
\begin{equation*}
	\mathbf{y}=\frac{\mathbf{Dx}}{\left| \left| \mathbf{Dx}\right|  \right|_{2}  } ,\  \text{and} \ \  \mathbf{z}=\frac{\mathbf{B}^{\mathbf{M}}\mathbf{x}}{\left| \left| \mathbf{B}^{\mathbf{M}}\mathbf{x}\right|  \right|_{2}  } .
\end{equation*}
The usual definition of inner product $\left< \mathbf{y,z}\right> =\mathbf{y^{\ast }z}$, angular distance is the measure of vectors on the unit sphere, we can define the angle distance between two vectors as
\begin{equation}
	\angle \left( \mathbf{y,z}\right)  =\arccos \mathbb{R}\left< \mathbf{y,z}\right>,
\end{equation}
where, $\angle$ denotes the angular distance, $\mathbb{R}\left< \mathbf{y,z}\right> $ denotes the real part. In particular, the sum of the angular distances between vectors $\mathbf{y}$ and $\mathbf{x}$, and $\mathbf{z}$ and $\mathbf{x}$ is always larger than the angular distance between $\mathbf{y}$ and $\mathbf{z}$, i.e.
\begin{equation}
	\arccos \mathbb{R}\left< \mathbf{y,x}\right>  +\arccos \mathbb{R}\left< \mathbf{z,x}\right>  \geq \arccos\mathbb{R}\left< \mathbf{y,z}\right>  .\label{arccosyz}
\end{equation}
For $\mathbb{R}\left< \mathbf{y,z}\right>$ , the upper bound is given by the Cauchy-Schwarz inequality
\begin{equation*}
	\begin{aligned}\mathbb{R}\left< \mathbf{y,z}\right>  \leq &\left| \mathbb{R}\left< \mathbf{y,z}\right>  \right|  \leq \left| \left< \mathbf{y,z}\right>  \right|  \\ 
		=&\frac{\left| \left< \mathbf{Dx},\mathbf{B}^{\mathbf{M}}\mathbf{x}\right>  \right|  }{\left| \left| \mathbf{Dx}\right|  \right|_{2}  \left| \left| \mathbf{B}^{\mathbf{M}}\mathbf{x}\right|  \right|_{2}  } =\frac{\left| \left< \mathbf{D}^{2}\mathbf{x},\mathbf{B}^{\mathbf{M}}\mathbf{x}\right>  \right|  }{\left| \left| \mathbf{Dx}\right|  \right|_{2}  \left| \left| \mathbf{B}^{\mathbf{M}}\mathbf{x}\right|  \right|_{2}  } \\ 
		=&\frac{\left| \left< \mathbf{Dx,DB}^{\mathbf{M}}\mathbf{x}\right>  \right|  }{\left| \left| \mathbf{Dx}\right|  \right|_{2}  \left| \left| \mathbf{B}^{\mathbf{M}}\mathbf{x}\right|  \right|_{2}  } \leq \frac{\left| \left| \mathbf{Dx}\right|  \right|_{2}  \left| \left| \mathbf{DB}^{\mathbf{M}}\mathbf{x}\right|  \right|_{2}  }{\left| \left| \mathbf{Dx}\right|  \right|_{2}  \left| \left| \mathbf{B}^{\mathbf{M}}\mathbf{x}\right|  \right|_{2}  } \\ 
		=&\frac{\left| \left| \mathbf{Dx}\right|  \right|_{2}  \sqrt{\left< \mathbf{D}^{2}\mathbf{B}^{\mathbf{M}}\mathbf{x,B^{M}x}\right>  } }{\left| \left| \mathbf{Dx}\right|  \right|_{2}  \left| \left| \mathbf{B}^{\mathbf{M}}\mathbf{x}\right|  \right|_{2}  } =\frac{\left| \left| \mathbf{Dx}\right|  \right|_{2}  \sqrt{\left< \mathbf{DB}^{\mathbf{M}}\mathbf{x},(\mathbf{B^{M}})^{2}\mathbf{x}\right>  } }{\left| \left| \mathbf{Dx}\right|  \right|_{2}  \left| \left| \mathbf{B}^{\mathbf{M}}\mathbf{x}\right|  \right|_{2}  } \\ 
		=&\frac{\left| \left| \mathbf{Dx}\right|  \right|_{2}  \sqrt{\left< \mathbf{B^{M}DB^{M}x,B^{M}x}\right>  } }{\left| \left| \mathbf{Dx}\right|  \right|_{2}  \left| \left| \mathbf{B}^{\mathbf{M}}\mathbf{x}\right|  \right|_{2}  } \leq \frac{\left| \left| \mathbf{Dx}\right|  \right|_{2}  \left| \left| \mathbf{B}^{\mathbf{M}}\mathbf{x}\right|  \right|_{2}   \left| \left| \mathbf{B^{M}DB^{M}x}\right|  \right|_{2} }{\left| \left| \mathbf{Dx}\right|  \right|_{2}  \left| \left| \mathbf{B}^{\mathbf{M}}\mathbf{x}\right|  \right|_{2}  }\\  \leq
		& \sqrt{\lambda_{\max } \left( \mathbf{B^{M}DB^{M}}\right)  }. \end{aligned} 
\end{equation*}
Since we assume that $\lambda_{\max } \leq \zeta^{2} \eta^{2}$ and the right-hand expression in \eqref{arccosyz} is less than 1, we can get
\begin{equation*}
	\mathbb{R}\left< \mathbf{y,z}\right>  \leq \sqrt{\lambda_{\max } \left( 
		\mathbf{B^{M}DB^{M}}\right)  } \leq 1,\  \text{and} \  \mathbb{R}\left< \mathbf{y,x}\right>  =\zeta ,\  \mathbb{R}\left< \mathbf{z,x}\right>  =\eta^{\mathbf{M}} .
\end{equation*}

So that formula \eqref{arccoszeta_eta} is proved. Next, we prove formula \eqref{etaM}.
Because the function $\arccos(x)$ decreases monotonically at $(0, 1]$, and according to formula \eqref{arccoszeta_eta}, we get
\begin{equation*}
	\arccos \eta^{\mathbf{M}} \geq \arccos \sqrt{\lambda_{\max } } -\arccos \zeta,
\end{equation*}
substituting both sides into the cosine function respectively
\begin{equation*}
	\eta^{\mathbf{M}} \leq \cos \left(  \arccos \sqrt{\lambda_{\max } } -\arccos \zeta \right)  .
\end{equation*}
Applying the trigonometric identity $\cos (a-b)=\cos a\cos b+\sin a\sin b$, we finally get the inequality
\begin{equation*}
	\eta^{\mathbf{M}} \leq \sqrt{\lambda_{\max } } \zeta +\sqrt{1-\lambda_{\max } } \sqrt{1-\zeta^{2} } .
\end{equation*}

\section{Proof of Theorem 3}
\label{AC}
	Since $	\mathrm{rank}( \mathbf{DO}^{-\mathbf{M}}_{|\mathcal{F}|}) =|\mathcal{F}|$ and 
	$\mathrm{rank}((\mathbf{PD}\mathbf{O}^{-\mathbf{M}}_{|\mathcal{F}|})=|\mathcal{F}|$, 
	$\mathrm{rank}(\mathbf{P})=|\mathcal{F}|$, i.e. $\mathbf{P}$ spans $\mathbb{C}^{|\mathcal{F}|}$. Hence, $\mathbf{R}= \mathbf{O}^{-\mathbf{M}}_{|\mathcal{F}|} \mathbf{P}$ spans $\mathrm{BL}_{|\mathcal{F}|}(\mathbf{O}^{\mathbf{M}})$.
	
	Let $\mathbf{J=RD}$, we have
	\begin{align*}
		\mathbf{J}^2&=\mathbf{RDRD} =\mathbf{O}^{-\mathbf{M}}_{|\mathcal{F}|}\left(\mathbf{PD} \mathbf{O}^{-\mathbf{M} }_{|\mathcal{F}|}\right)\mathbf{PD} \\
		&= \left(\mathbf{O}^{-\mathbf{M} }_{|\mathcal{F}|}\mathbf{I}_{|\mathcal{F}|\times |\mathcal{F}|} \mathbf{P} \right) \mathbf{D=RD}=\mathbf{J},
	\end{align*}
	thus $\mathbf{J=PD}$ is a projection operator.
	
Since $\mathbf{RD}$ is a projection operator, $\mathbf{RDx}$ is an approximation of $\mathbf{x}$ in the space of $\mathrm{BL}_{|\mathcal{F}|} \left(  \mathbf{O}^{\mathbf{M}}  \right)$. We achieve perfect recovery when $\mathbf{x}$ is in the space of $\mathrm{BL}_{|\mathcal{F}|} \left(  \mathbf{O}^{\mathbf{M}}  \right)$.
\section*{Declaration of competing interest}
The authors declare that they have no known competing financial interests or
personal relationships that could have appeared to influence the work reported
in this paper.

\section*{Acknowledgments}
This work were supported by the National Natural Science Foundation of China [No. 62171041], the BIT Research and Innovation Promoting Project [No.2023YCXY053], and Natural Science Foundation of Beijing Municipality [No. 4242011]. The authors would like to thank the editor, reviewers, and Prof. Linyu Peng of Keio University for their comments that improved the technical quality of this work.

%\section*{References}

\

\

\
%\textbf{Yu Zhang} was born in Huhhot, China, in 1998. He received the B.S. degree from China University of Geosciences, Beijing, China, in 2021. He is currently working towards the Ph.D. degree in Beijing Institute of Technology, Beijing, China. His research interests include the linear canonical transform, graph signal processing.\

%\textbf{Bing-Zhao Li} was born in 1975. He received the B.S. degree from Shandong Normal University in 1998, and the M.S. and Ph.D. degrees from Beijing Institute of Technology, in 2001 and 2007, respectively. He is currently a Professor with the School of Mathematics and Statistics, Beijing Institute of Technology, Beijing, China. He was supported by Henry Fok Education Foundation Young Teacher Award and the New Century Excellent Talents in University in 2010 and 2012. His current research interests include the theory and applications of fractional Fourier transform, linear canonical transform, and fractional calculus.\

\end{document}